\numberwithin{equation}{section}
\newtheorem{theorem}{Theorem}[section]
\newtheorem{lemma}[theorem]{Lemma}
\newtheorem{proposition}[theorem]{Proposition}
\newtheorem{corollary}[theorem]{Corollary}
\theoremstyle{definition}
\newenvironment{remark}
  {\pushQED{\qed}\remx}
  {\popQED\endremx}
\begin{document}
\address{Yongming Luo
\newline \indent Faculty of Computational Mathematics and Cybernetics \indent
\newline \indent Shenzhen MSU-BIT University, China\indent
}
\email{luo.yongming@smbu.edu.cn}

\newcommand{\diver}{\operatorname{div}}
\newcommand{\lin}{\operatorname{Lin}}
\newcommand{\curl}{\operatorname{curl}}
\newcommand{\ran}{\operatorname{Ran}}
\newcommand{\kernel}{\operatorname{Ker}}
\newcommand{\la}{\langle}
\newcommand{\ra}{\rangle}
\newcommand{\N}{\mathbb{N}}
\newcommand{\R}{\mathbb{R}}
\newcommand{\C}{\mathbb{C}}
\newcommand{\T}{\mathbb{T}}

\newcommand{\ld}{\lambda}
\newcommand{\fai}{\varphi}
\newcommand{\0}{0}
\newcommand{\n}{\mathbf{n}}
\newcommand{\uu}{{\boldsymbol{\mathrm{u}}}}
\newcommand{\UU}{{\boldsymbol{\mathrm{U}}}}
\newcommand{\buu}{\bar{{\boldsymbol{\mathrm{u}}}}}
\newcommand{\ten}{\\[4pt]}
\newcommand{\six}{\\[-3pt]}
\newcommand{\nb}{\nonumber}
\newcommand{\hgamma}{H_{\Gamma}^1(\OO)}
\newcommand{\opert}{O_{\varepsilon,h}}
\newcommand{\barx}{\bar{x}}
\newcommand{\barf}{\bar{f}}
\newcommand{\hatf}{\hat{f}}
\newcommand{\xoneeps}{x_1^{\varepsilon}}
\newcommand{\xh}{x_h}
\newcommand{\scaled}{\nabla_{1,h}}
\newcommand{\scaledb}{\widehat{\nabla}_{1,\gamma}}
\newcommand{\vare}{\varepsilon}
\newcommand{\A}{{\bf{A}}}
\newcommand{\RR}{{\bf{R}}}
\newcommand{\B}{{\bf{B}}}
\newcommand{\CC}{{\bf{C}}}
\newcommand{\D}{{\bf{D}}}
\newcommand{\K}{{\bf{K}}}
\newcommand{\oo}{{\bf{o}}}
\newcommand{\id}{{\bf{Id}}}
\newcommand{\E}{\mathcal{E}}
\newcommand{\ii}{\mathcal{I}}
\newcommand{\sym}{\mathrm{sym}}
\newcommand{\lt}{\left}
\newcommand{\rt}{\right}
\newcommand{\ro}{{\bf{r}}}
\newcommand{\so}{{\bf{s}}}
\newcommand{\e}{{\bf{e}}}
\newcommand{\ww}{{\boldsymbol{\mathrm{w}}}}
\newcommand{\zz}{{\boldsymbol{\mathrm{z}}}}
\newcommand{\U}{{\boldsymbol{\mathrm{U}}}}
\newcommand{\G}{{\boldsymbol{\mathrm{G}}}}
\newcommand{\VV}{{\boldsymbol{\mathrm{V}}}}
\newcommand{\II}{{\boldsymbol{\mathrm{I}}}}
\newcommand{\ZZ}{{\boldsymbol{\mathrm{Z}}}}
\newcommand{\hKK}{{{\bf{K}}}}
\newcommand{\f}{{\bf{f}}}
\newcommand{\g}{{\bf{g}}}
\newcommand{\lkk}{{\bf{k}}}
\newcommand{\tkk}{{\tilde{\bf{k}}}}
\newcommand{\W}{{\boldsymbol{\mathrm{W}}}}
\newcommand{\Y}{{\boldsymbol{\mathrm{Y}}}}
\newcommand{\EE}{{\boldsymbol{\mathrm{E}}}}
\newcommand{\F}{{\bf{F}}}
\newcommand{\spacev}{\mathcal{V}}
\newcommand{\spacevg}{\mathcal{V}^{\gamma}(\Omega\times S)}
\newcommand{\spacevb}{\bar{\mathcal{V}}^{\gamma}(\Omega\times S)}
\newcommand{\spaces}{\mathcal{S}}
\newcommand{\spacesg}{\mathcal{S}^{\gamma}(\Omega\times S)}
\newcommand{\spacesb}{\bar{\mathcal{S}}^{\gamma}(\Omega\times S)}
\newcommand{\skews}{H^1_{\barx,\mathrm{skew}}}
\newcommand{\kk}{\mathcal{K}}
\newcommand{\OO}{O}
\newcommand{\bhe}{{\bf{B}}_{\vare,h}}
\newcommand{\pp}{{\mathbb{P}}}
\newcommand{\ff}{{\mathcal{F}}}
\newcommand{\mWk}{{\mathcal{W}}^{k,2}(\Omega)}
\newcommand{\mWa}{{\mathcal{W}}^{1,2}(\Omega)}
\newcommand{\mWb}{{\mathcal{W}}^{2,2}(\Omega)}
\newcommand{\twos}{\xrightharpoonup{2}}
\newcommand{\twoss}{\xrightarrow{2}}
\newcommand{\bw}{\bar{w}}
\newcommand{\bz}{\bar{{\bf{z}}}}
\newcommand{\tw}{{W}}
\newcommand{\tr}{{{\bf{R}}}}
\newcommand{\tz}{{{\bf{Z}}}}
\newcommand{\lo}{{{\bf{o}}}}
\newcommand{\hoo}{H^1_{00}(0,L)}
\newcommand{\ho}{H^1_{0}(0,L)}
\newcommand{\hotwo}{H^1_{0}(0,L;\R^2)}
\newcommand{\hooo}{H^1_{00}(0,L;\R^2)}
\newcommand{\hhooo}{H^1_{00}(0,1;\R^2)}
\newcommand{\dsp}{d_{S}^{\bot}(\barx)}
\newcommand{\LB}{{\bf{\Lambda}}}
\newcommand{\LL}{\mathbb{L}}
\newcommand{\mL}{\mathcal{L}}
\newcommand{\mhL}{\widehat{\mathcal{L}}}
\newcommand{\loc}{\mathrm{loc}}
\newcommand{\tqq}{\mathcal{Q}^{*}}
\newcommand{\tii}{\mathcal{I}^{*}}
\newcommand{\Mts}{\mathbb{M}}
\newcommand{\pot}{\mathrm{pot}}
\newcommand{\tU}{{\widehat{\bf{U}}}}
\newcommand{\tVV}{{\widehat{\bf{V}}}}
\newcommand{\pt}{\partial}
\newcommand{\bg}{\Big}
\newcommand{\hA}{\widehat{{\bf{A}}}}
\newcommand{\hB}{\widehat{{\bf{B}}}}
\newcommand{\hCC}{\widehat{{\bf{C}}}}
\newcommand{\hD}{\widehat{{\bf{D}}}}
\newcommand{\fder}{\partial^{\mathrm{MD}}}
\newcommand{\Var}{\mathrm{Var}}
\newcommand{\pta}{\partial^{0\bot}}
\newcommand{\ptaj}{(\partial^{0\bot})^*}
\newcommand{\ptb}{\partial^{1\bot}}
\newcommand{\ptbj}{(\partial^{1\bot})^*}
\newcommand{\geg}{\Lambda_\vare}
\newcommand{\tpta}{\tilde{\partial}^{0\bot}}
\newcommand{\tptb}{\tilde{\partial}^{1\bot}}
\newcommand{\ua}{u_\alpha}
\newcommand{\pa}{p\alpha}
\newcommand{\qa}{q(1-\alpha)}
\newcommand{\Qa}{Q_\alpha}
\newcommand{\Qb}{Q_\eta}
\newcommand{\ga}{\gamma_\alpha}
\newcommand{\gb}{\gamma_\eta}
\newcommand{\ta}{\theta_\alpha}
\newcommand{\tb}{\theta_\eta}


\newcommand{\mH}{{E}}
\newcommand{\mN}{{S}}
\newcommand{\mD}{{\mathcal{D}}}
\newcommand{\csob}{\mathcal{S}}
\newcommand{\mA}{{\mathcal{A}}}
\newcommand{\mK}{{K}}
\newcommand{\mS}{{S}}
\newcommand{\mI}{{I}}
\newcommand{\tas}{{2_*}}
\newcommand{\tbs}{{2^*}}
\newcommand{\tm}{{\tilde{m}}}
\newcommand{\tdu}{{\phi}}
\newcommand{\tpsi}{{\tilde{\psi}}}
\newcommand{\Z}{{\mathbb{Z}}}
\newcommand{\tsigma}{{\tilde{\sigma}}}
\newcommand{\tg}{{\tilde{g}}}
\newcommand{\tG}{{\tilde{G}}}
\newcommand{\mM}{{M}}
\newcommand{\mC}{\mathcal{C}}
\newcommand{\wlim}{{\text{w-lim}}\,}
\newcommand{\diag}{L_t^\ba L_x^\br}
\newcommand{\vu}{ u}
\newcommand{\vz}{ z}
\newcommand{\vv}{ v}
\newcommand{\ve}{ e}
\newcommand{\vw}{ w}
\newcommand{\vf}{ f}
\newcommand{\vh}{ h}
\newcommand{\vp}{ \vec P}
\newcommand{\ang}{{\not\negmedspace\nabla}}
\newcommand{\dxy}{\Delta_{x,y}}
\newcommand{\lxy}{L_{x,y}}
\newcommand{\gnsand}{\mathrm{C}_{\mathrm{GN},3d}}
\newcommand{\wmM}{\widehat{{M}}}
\newcommand{\wmH}{\widehat{{E}}}
\newcommand{\wmI}{\widehat{{I}}}
\newcommand{\wmK}{\widehat{{K}}}
\newcommand{\wmN}{\widehat{{S}}}
\newcommand{\wm}{\widehat{\gamma}}
\newcommand{\ba}{\mathbf{a}}
\newcommand{\bb}{\mathbf{b}}
\newcommand{\br}{\mathbf{r}}
\newcommand{\bs}{\mathbf{s}}
\newcommand{\bq}{\mathbf{q}}
\newcommand{\SSS}{\mathcal{S}}
\newcommand{\re}{\mathrm{Re}}
\newcommand{\im}{\mathrm{Im}}
\newcommand{\zt}{{\tilde z}}



\title[A Legendre-Fenchel identity for NLS on $\R^d\times\T^m$]
{A Legendre-Fenchel identity for the nonlinear Schr\"odinger equations on $\mathbb{R}^d\times\mathbb{T}^m$: theory and applications}
\author{Yongming Luo}

\keywords{Nonlinear Schr\"{o}dinger equations, Legendre-Fenchel identity, large data scattering}
\subjclass[2020]{35Q55, 35P25, 35B40, 35A15}

\maketitle

\begin{abstract}
The present paper is inspired by a previous work \cite{Luo_Waveguide_MassCritical} of the author, where the large data scattering problem for the focusing cubic nonlinear Schr\"odinger equation (NLS) on $\mathbb{R}^2\times\mathbb{T}$ was studied. Nevertheless, the results from \cite{Luo_Waveguide_MassCritical} are by no means sharp, as we could not even prove the existence of ground state solutions on the formulated threshold. By making use of the variational tools introduced by the author \cite{Luo_inter}, we establish in this paper the sharpened scattering results. Yet due to the mass-critical nature of the model, we encounter the major challenge that the standard scaling arguments fail to perturb the energy functionals. We overcome this difficulty by proving a crucial Legendre-Fenchel identity for the variational problems with prescribed mass and frequency. More precisely, we build up a general framework based on the Legendre-Fenchel identity and show that the much harder or even unsolvable variational problem with prescribed mass, can in fact be equivalently solved by considering the much easier variational problem with prescribed frequency. As an application showing how the geometry of the domain affects the existence of the ground state solutions, we also prove that while all mass-critical ground states on $\mathbb{R}^d$ must possess the fixed mass $\widehat M(Q)$, the existence of mass-critical ground states on $\mathbb{R}^d\times\mathbb{T}$ is ensured for a sequence of mass numbers approaching zero.
\end{abstract}

\section{Introduction}
The paper is devoted to the study of the focusing nonlinear Schr\"odinger equation (NLS)
\begin{align}\label{nls}
(i\pt_t +\Delta_{x,y})u =-|u|^{\alpha}u
\end{align}
and its corresponding solitary wave equation
\begin{align}\label{nls2}
-\Delta_{x,y}u +\omega u=|u|^{\alpha}u.
\end{align}
on the waveguide manifolds $\R_x^d\times\T_y^m$ with $\T=\R/2\pi\Z$. Equation \eqref{nls} arises from several physical applications such as the nonlinear optics and Bose-Einstein condensates \cite{waveguide_ref_1,waveguide_ref_2,waveguide_ref_3} and has been extensively studied in recent years. In addition to its physical significance, the close connection of equation \eqref{nls} with partial differential equations and harmonic analysis has also made it attract much attention from the mathematical community. Different results concerning the well-posedness, long time behavior and existence and stability of the corresponding soliton solutions are nowadays well established. We refer e.g. to the papers \cite{TzvetkovVisciglia2016,TTVproduct2014,Ionescu2,HaniPausader,CubicR2T1Scattering,R1T1Scattering,Cheng_JMAA,ZhaoZheng2021,RmT1,YuYueZhao2021,Barron,BarronChristPausader2021,Luo_Waveguide_MassCritical,Luo_inter,Luo_energy_crit,luo2021sharp,luo2023sure,esfahani2023focusing} for some recent results in this direction.

Among all, we are particularly interested in the problem whether a global solution (as long as it exists) of \eqref{nls} will also be a \textit{scattering} solution, in the sense that the solution of \eqref{nls} resembles the linear solution as time evolves to infinity. While this is expected for NLS on $\R^d$ for suitable nonlinearities, it is not expected for NLS on $\T^m$ due to its periodicity. Thus it becomes a subtle question how the dispersion is balanced when the totally different domains are intertwined with each other.

It turns out that scattering may still happen on the mixed domain $\R^d\times\T^m$ as long as the nonlinearity is chosen appropriately. Heuristically, we may consider the two extremal cases, where
\begin{itemize}
\item either \eqref{nls} is completely independent of the $y$-direction, in which case we demand the nonlinearity to be at least mass-critical w.r.t. the dimension number $d$,
\item or the initial datum of \eqref{nls} has a relatively small support in the $y$-direction so that $\R^d\times\T^m$ may be seen as $\R^{d+m}$ (as it is in some sense a ``large'' domain), in which case the nonlinearity should be at most energy-critical w.r.t. the dimension number $d+m$.
\end{itemize}
Inspired by the heuristics, the number $\alpha$ should hence satisfy $4/d\leq \alpha\leq 4/(d+m-2)$, which particularly implies that $m\in\{0,1,2\}$. Notice that $m=2$ corresponds to the (most difficult) doubly mass- and energy-critical case. As we still do not have a full understanding in the energy-critical model, we will restrict ourselves in this paper to the case $m\in\{0,1\}$ and
\begin{align}\label{add}
\alpha\in[\tas,\tbs),\quad\tas=\frac{4}{d},\quad\tbs=
\left\{
\begin{array}{cl}
\infty,&d+m\leq 2\\
\frac{4}{d+m-2},&d+m\geq 3.
\end{array}
\right.
\end{align}
Moreover, we refer the cases $\alpha=\tas$ and $\alpha=\tbs$ to as the \textit{mass-critical} and \textit{energy-critical} cases respectively, while the interval $(\tas,\tbs)$ is called the \textit{intercritical} regime.

Since the seminal works by Hani-Pausader \cite{HaniPausader} and Tzvetkov-Visciglia \cite{TzvetkovVisciglia2016}, the study on the large data scattering problems for defocusing NLS on $\R^d\times\T^m$ is nowadays complete, see \cite{R2T2,RmT1,CubicR2T1Scattering,R1T1Scattering}. Nevertheless, the focusing type problems had remained open for a long time and has been studied fairly recently by a series papers of the author \cite{Luo_Waveguide_MassCritical,Luo_inter,Luo_energy_crit,luo2021sharp}. Different from the defocusing problems, the main difficulty by studying focusing problems is mainly attributed to a poor understanding in the rather complicated variational structure of the focusing NLS on product spaces. Such issue already arose when the author gave his first attempt on formulating the large data scattering results for the focusing cubic NLS on $\R^2\times\T$ \cite{Luo_Waveguide_MassCritical}. Particularly, the results from \cite{Luo_Waveguide_MassCritical} are by no means sharp, as we could not even prove the existence of ground state solutions on the formulated threshold.

By introducing a general framework involving the so-called \textit{semivirial-vanishing geometry}, the author was able to prove the sharp bifurcation of the scattering and blowing-up solutions for the focusing NLS in the mass-supercritical regime \cite{Luo_inter,Luo_energy_crit,luo2021sharp}. One of the main purposes of the present paper is to show that the theory of the semivirial-vanishing geometry will continue to work in the mass-critical setting and help us to formulate sharp scattering and blow-up results, which will ultimately improve the previous ones established in \cite{Luo_Waveguide_MassCritical}.


We underline, however, that the adaptation of the framework of the semivirial-vanishing geometry into the mass-critical setting is highly non-trivial and technical where several new ideas and tools need be introduced. The main obstacle here is that in the mass-critical setting, the Laplacian and the nonlinear potential share the same scaling, thus we are unable to perturb any useful energy functional by the standard scaling operator $t\mapsto t^\frac{d}{2}u(tx,y)$ which leaves the mass of a function $u$ invariant for all given $t>0$.

The idea for solving this problem is mainly inspired by the recent paper \cite{actionvsenergy} of Dovetta, Serra and Tilli, where the authors studied the duality of the energy and action ground states on $\R^d$. Therein, the authors studied the variational problems\footnote{Throughout the paper, we use hatted notation to denote quantities defined on $\R^d$.}
\begin{equation}\label{1.3}
\begin{array}{c}
\tilde{m}_c:=\inf\{\widehat E(u):u\in H^1(\R^d),\,\widehat M(u)=c\},\\
\\
\tilde{\beta}_\omega:=\inf\{\widehat S_\omega(u):u\in H^1(\R^d)\setminus\{0\},\,\widehat N_\omega(u)=0\}
\end{array}
\end{equation}
for $c,\omega\in\R$, where $\widehat M(u)$, $\widehat E(u)$ denote the mass and energy, $\widehat S_\omega (u)$ the action functional and $\widehat N_\omega(u)$ the constraint deduced from the stationary equation \eqref{nls2}, see Section \ref{sec notation} below for their precise definitions. Optimizers of $\tilde m_c$ and $\tilde \beta_\omega$ are in literature referred to as the \textit{energy} as well as \textit{action} ground states, respectively. The main result in \cite{actionvsenergy} particularly reveals a certain duality between $\tilde m_c$ and $\tilde \beta_\omega$, formulated as a Legendre-Fenchel identity, which reads as follows:

\begin{theorem}[A Legendre-Fenchel identity for $\tilde m_c$ and $\tilde \beta_\omega$, \cite{actionvsenergy}]
For $\alpha\in(0,\tbs)$ and $c\geq 0$ it holds
\begin{align}\label{lf identity rd}
\tilde m_c=\inf_{\omega\in\R}(\tilde \beta_\omega-\frac12 c\omega).
\end{align}
\end{theorem}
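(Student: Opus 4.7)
The two key ingredients are the algebraic relation $\widehat S_\omega(u) = \widehat E(u) + \tfrac{\omega}{2}\widehat M(u)$ (so that on $\{\widehat M = c\}$ one has $\widehat E(u) = \widehat S_\omega(u) - \tfrac{c\omega}{2}$ for every $\omega$), and the companion Nehari identity on $\{\widehat N_\omega = 0\}$, which after substitution collapses the action to $\widehat S_\omega(u) = \tfrac{\alpha}{2(\alpha+2)}\|u\|_{\alpha+2}^{\alpha+2}$. Both inequalities of \eqref{lf identity rd} will be extracted from these.

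For the lower bound $\tilde m_c \geq \inf_\omega(\tilde\beta_\omega - \tfrac{c\omega}{2})$: given any $u \in H^1(\R^d)\setminus\{0\}$ with $\widehat M(u) = c > 0$, the map $\omega \mapsto \widehat N_\omega(u)$ is affine in $\omega$ with slope $c$, so it admits a unique root $\omega_u := (\|u\|_{\alpha+2}^{\alpha+2} - \|\nabla u\|_2^2)/c$, which makes $u$ admissible for $\tilde\beta_{\omega_u}$. Consequently
\[
\widehat E(u) = \widehat S_{\omega_u}(u) - \tfrac{c\omega_u}{2} \geq \tilde\beta_{\omega_u} - \tfrac{c\omega_u}{2} \geq \inf_{\omega \in \R}\bigl(\tilde\beta_\omega - \tfrac{c\omega}{2}\bigr),
\]
and taking the infimum over $u$ delivers the inequality; the boundary case $c = 0$ is handled directly by inspection.

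For the opposite inequality, fix $\omega \in \R$; if $\tilde\beta_\omega = +\infty$ there is nothing to show, so choose a minimizing sequence $(v_n)$ with $\widehat N_\omega(v_n) = 0$ and $\widehat S_\omega(v_n) \to \tilde\beta_\omega$, and set $u_n := t_n v_n$ with $t_n := \sqrt{c/\widehat M(v_n)}$, so that $\widehat M(u_n) = c$. Substituting the Nehari relation $\|\nabla v_n\|_2^2 + \omega \|v_n\|_2^2 = \|v_n\|_{\alpha+2}^{\alpha+2}$ into $\widehat S_\omega(u_n)$ yields
\[
\widehat S_\omega(u_n) = g(t_n)\,\|v_n\|_{\alpha+2}^{\alpha+2}, \qquad g(t) := \tfrac{t^2}{2} - \tfrac{t^{\alpha+2}}{\alpha+2}.
\]
A one-line calculus check shows that $g$ on $(0,\infty)$ attains its unique maximum at $t = 1$ with $g(1) = \tfrac{\alpha}{2(\alpha+2)}$; by the companion identity of the previous paragraph this equals $\widehat S_\omega(v_n)/\|v_n\|_{\alpha+2}^{\alpha+2}$. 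Hence $\widehat S_\omega(u_n) \leq \widehat S_\omega(v_n)$, so
\[
\tilde m_c \leq \widehat E(u_n) = \widehat S_\omega(u_n) - \tfrac{c\omega}{2} \leq \widehat S_\omega(v_n) - \tfrac{c\omega}{2} \longrightarrow \tilde\beta_\omega - \tfrac{c\omega}{2},
\]
and taking the infimum over $\omega$ closes the identity.

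The argument is surprisingly compact: its crux is that the \emph{amplitude} scaling $v \mapsto tv$, combined with the Nehari identity, reduces matters to the elementary inequality $g(t) \leq g(1)$, which holds for all $\alpha > 0$. This is precisely what sidesteps the obstruction emphasised in the introduction, namely that mass-preserving spatial dilations degenerate at the mass-critical exponent and therefore cannot be used to transport an arbitrary Nehari-admissible function to one of prescribed mass $c$. Anticipating the paper's extension to $\R^d \times \T$, the main analytic difficulty I expect is not in the algebraic framework above, but in reproducing the clean ``Nehari-value'' form $\widehat S_\omega = \tfrac{\alpha}{2(\alpha+2)}\|\cdot\|_{\alpha+2}^{\alpha+2}$ in the product geometry, where the $y$-variable introduces direction-dependent structure that the simple amplitude rescaling cannot by itself resolve.
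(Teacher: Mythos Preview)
This theorem is not proved in the present paper: it is quoted verbatim from \cite{actionvsenergy} (Dovetta--Serra--Tilli), and the paper contains no proof of its own for this statement. There is therefore nothing in the paper to compare your argument against. The paper's contribution is the \emph{generalization} in Theorem~\ref{thm legendre}, whose proof (in Section~\ref{sec 3}) proceeds by an entirely different mechanism---it relies on the existence of ground states for $\gamma_\omega$ (Theorem~\ref{thm existence ground states 1}) together with the asymptotics of Lemma~\ref{lemma 4.1} to show that $A_c \neq \varnothing$, after which \eqref{lengendre identity} drops out in two lines.

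That said, your argument is essentially correct for $c > 0$. The lower bound is immediate, and for the upper bound the amplitude rescaling $v \mapsto tv$ combined with the Nehari relation indeed reduces everything to the one-variable inequality $g(t) \le g(1)$, which is a clean and elementary route. Two small caveats: (i) the case $c = 0$ is dismissed by ``inspection'' on the lower-bound side only; on the upper-bound side your rescaling degenerates ($t_n = 0$), so a separate verification that $\inf_\omega \tilde\beta_\omega = 0$ is still needed (this requires looking at what happens as $\omega \to -\infty$ or at $\omega$ for which the Nehari set is empty); (ii) for $\omega \le 0$ the Nehari manifold $\{\widehat N_\omega = 0\}$ may be empty and $\tilde\beta_\omega = +\infty$, which you handle by convention, but the infimum over $\omega \in \R$ in \eqref{lf identity rd} is then effectively over the half-line $\omega > 0$ anyway---worth a sentence to make explicit.
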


We note, however, that despite the elegance of the identity \eqref{lf identity rd}, it is rather less meaningful to apply \eqref{lf identity rd} for studying the mass-(super)critical problems, as in this case we always have $\tilde m_c=-\infty$. To avoid such triviality, we know --- since the seminal paper of Jeanjean \cite{Jeanjean1997} --- that the virial constraint $\widehat K(u)=0$ need be taken into account. More generally, one of the main contributions of the present work is to give the correct formulation of \eqref{lf identity rd} in the mass-(super)critical context for focusing NLS posed on $\R^d\times\T^m$, where the case $m=0$ is also included and the result from \cite{actionvsenergy} is generalized in a correct and non-trivial way.

To formulate our main results, the energy functionals $M(u),E(u),S_\omega(u),K(u),N_\omega(u)$ defined in Section \ref{sec notation} will be invoked. We then define the variational problems
\begin{align*}
m_c&:=\inf\{E(u):u\in H^1(\R^d\times\T^m),\,M(u)=c,\,K(u)=0\},\\
\gamma_\omega&:=\inf\{S_\omega(u):u\in H^1(\R^d\times\T^m)\setminus\{0\},\,K(u)=0\}.
\end{align*}
Similarly, we define the hatted quantities as the ones defined on $\R^d$. Inspired by \eqref{lf identity rd} we may also define the following variational problem
\[\beta_\omega:=\inf\{S_\omega(u):u\in H^1(\R^d\times\T^m)\setminus\{0\},\,N_\omega(u)=0\}.\]
As a first result, the proposition below shows that the variational problems $\gamma_\omega$ and $\beta_\omega$ are indeed equivalent.

\begin{proposition}[Identification of $\gamma_\omega$ and $\beta_\omega$]\label{prop iden of gamma and beta}
For any $\omega\in(0,\infty)$ we have $\gamma_\omega=\beta_\omega$.
\end{proposition}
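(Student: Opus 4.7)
The plan is to prove the two inequalities $\gamma_\omega \leq \beta_\omega$ and $\beta_\omega \leq \gamma_\omega$ separately, by exploiting two natural one-parameter scalings: the amplitude scaling $t \mapsto tu$, whose critical points are characterized by $N_\omega(tu)=0$, and the mass-preserving $x$-dilation $u_\lambda(x,y) := \lambda^{d/2} u(\lambda x, y)$, which under the mass-critical nonlinearity intertwines with the semivirial $K$. Throughout, I use the explicit forms $S_\omega(u) = \tfrac12\|\nabla u\|_2^2 + \tfrac{\omega}{2}\|u\|_2^2 - \tfrac{1}{\alpha+2}\|u\|_{\alpha+2}^{\alpha+2}$, $N_\omega(u) = \|\nabla u\|_2^2 + \omega\|u\|_2^2 - \|u\|_{\alpha+2}^{\alpha+2}$, and $K(u) = \|\nabla_x u\|_2^2 - \tfrac{d\alpha/(2(\alpha+2))}\|u\|_{\alpha+2}^{\alpha+2}$.

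For $\gamma_\omega \leq \beta_\omega$, I take any admissible $u \ne 0$ for $\beta_\omega$. Since $\frac{d}{dt} S_\omega(tu) = t^{-1} N_\omega(tu)$, the condition $N_\omega(u) = 0$ forces $t=1$ to be the unique global maximizer of $t \mapsto S_\omega(tu)$ on $(0,\infty)$. The equation $K(t_* u) = 0$ reduces to an elementary relation $t_*^{\alpha} = \tfrac{(\alpha+2)\|\nabla_x u\|_2^2}{(d\alpha/2)\|u\|_{\alpha+2}^{\alpha+2}}$, which is uniquely solvable for $t_* > 0$ (and the right-hand side is positive, since otherwise $u \equiv 0$). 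Then $v := t_* u$ is admissible for $\gamma_\omega$ and satisfies $S_\omega(v) \leq S_\omega(u)$ by the maximization property; passing to the infimum over $u$ yields the claim.

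For $\beta_\omega \leq \gamma_\omega$, I take any $u \ne 0$ with $K(u) = 0$ and apply the mass-preserving $x$-dilation. The mass-critical exponent $d\alpha/2 = 2$ gives the key identity $K(u_\lambda) = \lambda^2 K(u) = 0$, so the orbit $\{u_\lambda\}_{\lambda>0}$ remains inside $\{K=0\}$. Moreover, substituting $K(u)=0$ into the explicit expression for $S_\omega(u_\lambda)$ causes the two $\lambda^2$-terms to cancel, yielding $S_\omega(u_\lambda) \equiv \tfrac12\|\nabla_y u\|_2^2 + \tfrac{\omega}{2}\|u\|_2^2$, constant in $\lambda$. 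The analogous computation gives $N_\omega(u_\lambda) = -\tfrac{\alpha}{2}\lambda^2 \|\nabla_x u\|_2^2 + \|\nabla_y u\|_2^2 + \omega\|u\|_2^2$, which is strictly decreasing in $\lambda^2$, positive at $\lambda \to 0^+$ and tending to $-\infty$ as $\lambda \to \infty$. Hence a unique $\lambda_* > 0$ exists with $N_\omega(u_{\lambda_*}) = 0$; setting $v := u_{\lambda_*}$, $v$ is admissible for $\beta_\omega$ with $S_\omega(v) = S_\omega(u)$, and taking the infimum over $u$ concludes.

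The conceptual core is the compatibility of the semivirial manifold $\{K = 0\}$ with the mass-preserving $x$-dilation precisely at the mass-critical exponent: this manifold becomes an invariant set along which $S_\omega$ is conserved, allowing a free reparametrization until $N_\omega = 0$ is met without altering the value of $S_\omega$. This is exactly the feature that distinguishes the present setting from the intercritical regime of \textnormal{\cite{Luo_inter}}, where the analogous $\lambda$-orbit leaves $\{K=0\}$ and one must instead rely on the maximization property at $\lambda = 1$. I do not anticipate significant analytic obstacles for the proposition itself, as the full argument is purely algebraic once the two scalings are correctly paired with the two constraints.
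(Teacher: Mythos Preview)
Your argument is correct and takes a genuinely different, more elementary route than the paper. The paper proves $\gamma_\omega\geq\beta_\omega$ by invoking Theorem~\ref{thm existence ground states 1} and Lemma~\ref{vc implies sc}: an optimizer $u_\omega$ of $\gamma_\omega$ exists, solves \eqref{nls2}, and hence satisfies $N_\omega(u_\omega)=0$. For the reverse inequality it cites an external existence result for optimizers of $\beta_\omega$ from \cite{esfahani2023focusing} and then applies the Pohozaev identity (Lemma~\ref{lemma of poho}) to obtain $K(v_\omega)=0$. Your proof bypasses optimizers, Euler--Lagrange equations, and Pohozaev entirely, relying only on algebraic scaling identities for the functionals. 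What the paper's approach buys is that the existence theory is developed anyway for other purposes; what yours buys is self-containedness and independence from the external reference.

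One point to make explicit: the proposition is stated for the full range $\alpha\in[\tas,\tbs)$, but your second direction as written relies on the constancy of $\lambda\mapsto S_\omega(u_\lambda)$ along the $x$-dilation orbit, which holds only at $\alpha=\tas$. You correctly identify the fix in your closing remark (``the maximization property at $\lambda=1$'') but do not carry it out. For $\alpha>\tas$ one has $\frac{d}{d\lambda}S_\omega(u_\lambda)=\lambda^{-1}K(u_\lambda)$, and under $K(u)=0$ one computes $K(u_\lambda)=\lambda^2\|\nabla_x u\|_2^2\bigl(1-\lambda^{d\alpha/2-2}\bigr)$, so $\lambda=1$ is the global maximum of $S_\omega(u_\lambda)$. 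Since $N_\omega(u_\lambda)\to\omega\|u\|_2^2+\|\nabla_y u\|_2^2>0$ as $\lambda\to 0^+$ and $N_\omega(u_\lambda)\to-\infty$ as $\lambda\to\infty$ (using $d\alpha/2>2$), a root $\lambda_*$ exists and $S_\omega(u_{\lambda_*})\leq S_\omega(u)$ follows. With this addendum your argument covers the full range.
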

Proposition \ref{prop iden of gamma and beta} thus enables us to manipulate the analysis solely for the variational problem $\gamma_\omega$, which will be the case in many of the upcoming computations.

Though our main purpose is to establish a generalization of the Legendre-Fenchel identity \eqref{lf identity rd} in the mass-(super)critical setting, we shall see that the existence and periodic dependence of the minimizers of $\gamma_\omega$ will play a pivotal role in the upcoming proofs. Corresponding results can be proved by gluing several ideas from \cite{TTVproduct2014,Luo_inter} and are given as follows:

\begin{theorem}[Existence of ground states for $\gamma_\omega$]\label{thm existence ground states 1}
Let $\alpha\in [\tas,\tbs)$. Then
\begin{itemize}
\item[(i)] For any $\omega>0$ the variational problem $\gamma_\omega$ possesses a positive minimizer $u_\omega$ which also solves \eqref{nls2} with the given $\omega$.

\item[(ii)]In the case $\alpha=\tas$, for any $t>0$ and any optimizer $u_\omega$ of $\gamma_\omega$, the function $v^t$ defined by
\begin{align}\label{1.4}
v^t(x,y)=t^{\frac{d}{2}}u_\omega(tx,y)
\end{align}
is also an optimizer of $\gamma_\omega$.
\end{itemize}
\end{theorem}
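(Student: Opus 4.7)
The plan is to treat the two parts separately: (i) is a genuine constrained variational problem on $H^1(\R^d\times\T^m)$ requiring a compactness analysis, whereas (ii) reduces to a direct scaling identity peculiar to the mass-critical exponent.

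For (i), I would combine the semivirial-vanishing framework from \cite{Luo_inter} with the product-space concentration-compactness techniques of \cite{TTVproduct2014}. First establish $\gamma_\omega>0$: combining the constraint $K(u)=0$ with a mixed Gagliardo-Nirenberg inequality on $\R^d\times\T^m$ yields a uniform lower bound on admissible competitors in a suitable norm, and the identity $K(u)=0$ lets one rewrite $S_\omega(u)$ as a sum of manifestly positive quantities, giving $\gamma_\omega>0$ and simultaneously the $H^1$-boundedness of any minimizing sequence $(u_n)\subset\{K=0\}$. After a Schwarz rearrangement in the $x$-variable, one may assume each $u_n$ is nonnegative and $x$-radial, which reduces the noncompact symmetry group to $x$-translations along a single line. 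A Lions-type profile decomposition adapted to waveguides (as in \cite{TTVproduct2014,Luo_inter}) then produces, after suitable translation, a nontrivial weak limit $u_\omega$; vanishing is excluded by the uniform $L^{\alpha+2}$-norm lower bound enforced by $K(u_n)=0$. Weak lower semicontinuity of $S_\omega$, a Brezis-Lieb splitting of the constraint, and a short rescaling argument (to exclude the scenario $K(u_\omega)<0$) then force $u_\omega$ to attain $\gamma_\omega$. A Lagrange multiplier analysis delivers the Euler-Lagrange equation $S_\omega'(u_\omega)=\lambda K'(u_\omega)$, and pairing against the infinitesimal generator of the scaling that defines $K$ forces $\lambda=0$, so $u_\omega$ solves \eqref{nls2}; positivity then follows from the strong maximum principle applied to this nonnegative classical solution.

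The hardest part of (i) is recovering compactness on the waveguide: the $x$-translation group is noncompact, and in the mass-critical case $\alpha=\tas$ the Gagliardo-Nirenberg inequality is scale-invariant, so sharp constants must be tracked to rule out vanishing, concentration and escape profiles simultaneously. The constraint $K=0$ is precisely what pins down the scale of competitors and makes the profile decomposition effective; carrying this analysis out uniformly across the full intercritical range $\alpha\in[\tas,\tbs)$ is the technical core of the argument.

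For (ii), the key observation is that the rescaling $u\mapsto t^{d/2}u(t\cdot,\cdot)$ is exactly mass-preserving and acts homogeneously on the remaining ingredients of $K$ and $S_\omega$: a direct computation gives $\|\nabla_x v^t\|_2^2=t^2\|\nabla_x u_\omega\|_2^2$ and $\|v^t\|_{\alpha+2}^{\alpha+2}=t^{d\alpha/2}\|u_\omega\|_{\alpha+2}^{\alpha+2}$, while $\|\nabla_y v^t\|_2^2$ and $\|v^t\|_2^2$ remain invariant. In the mass-critical regime $\alpha=\tas=4/d$ the two $x$-homogeneities coincide ($t^{d\alpha/2}=t^2$), so $K(v^t)=t^2K(u_\omega)=0$ and the $t$-dependent part of $S_\omega(v^t)$ collapses to $\tfrac{t^2}{2}K(u_\omega)=0$, yielding $S_\omega(v^t)=S_\omega(u_\omega)=\gamma_\omega$. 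Hence $v^t$ is again an optimizer, and the Pohozaev manifold is foliated by the orbits of this mass-preserving scaling.
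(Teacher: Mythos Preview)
Your overall strategy for (i) is close to the paper's, but there are two genuine gaps that would derail the argument as written, both stemming from the mass-critical case $\alpha=\tas$.

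First, the Lagrange multiplier step. You write that pairing $S_\omega'(u_\omega)=\lambda K'(u_\omega)$ with the infinitesimal generator of the scaling $u\mapsto t^{d/2}u(t\cdot,\cdot)$ forces $\lambda=0$. This pairing yields $K(u_\omega)=\lambda\cdot\frac{d}{dt}\big|_{t=1}K(v^t)$, and on the constraint $K(u_\omega)=0$ the right-hand side equals $\lambda(2-\tfrac{d\alpha}{2})\|\nabla_x u_\omega\|_2^2$. In the intercritical regime this indeed gives $\lambda=0$, but when $\alpha=4/d$ the coefficient vanishes and the identity is vacuous. The paper (Lemma~\ref{vc implies sc}) handles this by first showing $\lambda<\tfrac12$ via a sign argument, and then observing that in the mass-critical case the Euler--Lagrange equation reads $(1-2\lambda)(-\Delta_x u)-\Delta_y u+\omega u=(1-2\lambda)|u|^{4/d}u$; the rescaled function $v(x,y)=t^{d/2}u_\omega(tx,y)$ with $t=\sqrt{1-2\lambda}$ then solves \eqref{nls2} exactly. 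So the minimizer that solves \eqref{nls2} is not $u_\omega$ itself but an element of its scaling orbit, which is still a minimizer by (ii).

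Second, you invoke ``weak lower semicontinuity of $S_\omega$'', but $S_\omega$ contains the term $-\tfrac{1}{\alpha+2}\|u\|_{\alpha+2}^{\alpha+2}$ and is not w.l.s.c. The paper circumvents this via Lemma~\ref{identification m gamma}: it shows $\gamma_\omega=\bar\gamma_\omega:=\inf\{I_\omega(u):K(u)\leq 0\}$ with $I_\omega(u)=\tfrac{\omega}{2}M(u)+\tfrac12\|\nabla_y u\|_2^2+\tfrac{\alpha d-4}{4(\alpha+2)}\|u\|_{\alpha+2}^{\alpha+2}$, which \emph{is} weakly lower semicontinuous (all terms are nonnegative norms, and in the mass-critical case the last one drops out). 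Working with $I_\omega$ and the relaxed constraint $K\leq 0$ is what makes the Brezis--Lieb splitting and the exclusion of $K(u_\omega)\gtrless 0$ go through cleanly. Your sketch needs this reformulation (or an equivalent device) to pass from the weak limit to an actual minimizer.

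Your treatment of (ii) is correct and matches the paper's.
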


\begin{theorem}[$y$-dependence of ground states of $\gamma_\omega$]\label{thm threshold frequency}
Let $\alpha\in [\tas,\tbs)$ and $m\neq 0$. Then there exists some $\omega_*\in(0,\infty)$ such that
\begin{itemize}
\item[(i)] For all $\omega \in (0,\omega_*]$ we have $\gamma_{\omega}=(2\pi)^m \widehat \gamma_\omega$. Moreover, for $\omega \in (0,\omega_*)$ any minimizer $u_\omega$ of $\gamma_\omega$ satisfies $\nabla_y u_\omega= 0$.

\item[(ii)] For all $\omega \in (\omega_*,\infty)$ we have $\gamma_{\omega}<(2\pi)^m \widehat \gamma_\omega$. Moreover, for $\omega \in (\omega_*,\infty)$ any minimizer $u_\omega$ of $\gamma_\omega$ satisfies $\nabla_y u_\omega\neq 0$.
\end{itemize}
\end{theorem}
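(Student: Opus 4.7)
The plan is to set $\omega_* := \sup\{\omega > 0 : \gamma_\omega = (2\pi)^m\widehat\gamma_\omega\}$ (with $\omega_*=0$ if the set is empty) and to verify that $\omega_*\in(0,\infty)$ together with the claimed structure. The starting observation is the lifting inequality $\gamma_\omega \leq (2\pi)^m\widehat\gamma_\omega$: given any minimizer $\hat u_\omega$ of $\widehat\gamma_\omega$ on $\R^d$, the $y$-independent lift $u(x,y):=\hat u_\omega(x)$ satisfies $S_\omega(u)=(2\pi)^m\widehat\gamma_\omega$ and $K(u)=(2\pi)^m\widehat K(\hat u_\omega)=0$, so $u$ is admissible for $\gamma_\omega$.

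The main tool is the pure-$x$ rescaling $T_\omega[\tilde u](x,y):=\omega^{1/\alpha}\tilde u(\sqrt\omega\,x,y)$, which is a bijection of $\{K=0\}$ onto itself since $K(T_\omega[\tilde u])=\omega^{a_d}K(\tilde u)$, where $a_d:=2/\alpha+1-d/2$. A direct computation yields
\[
\omega^{-a_d}S_\omega(T_\omega[\tilde u])\;=\;\mathcal S^\natural(\tilde u)+\frac{1}{2\omega}\|\nabla_y\tilde u\|_{L^2}^2,
\]
with $\mathcal S^\natural(\tilde u):=\tfrac{1}{2}\|\nabla_x\tilde u\|^2-\tfrac{1}{\alpha+2}\|\tilde u\|_{\alpha+2}^{\alpha+2}+\tfrac{1}{2}\|\tilde u\|^2$ independent of $\omega$. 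Taking the infimum over $\tilde u\in\{K=0\}$ and using the bijection,
\[
\omega^{-a_d}\gamma_\omega\;=\;\inf_{\tilde u\in\{K=0\}}\Bigl[\mathcal S^\natural(\tilde u)+\frac{1}{2\omega}\|\nabla_y\tilde u\|^2\Bigr],
\]
which is nonincreasing in $\omega$ as an infimum of nonincreasing functions. Combined with $\omega^{-a_d}(2\pi)^m\widehat\gamma_\omega\equiv (2\pi)^m\widehat\gamma_1$ (Pohozaev scaling on $\R^d$), this forces the equality-set to be downward closed, hence of the form $(0,\omega_*]$; restricting the above infimum to $y$-independent competitors recovers exactly $(2\pi)^m\widehat\gamma_1$, so equality holds iff an optimal $\tilde u$ can be chosen $y$-independent.

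The bounds $\omega_*<\infty$ and $\omega_*>0$ each require a dedicated test-function estimate. For $\omega_*<\infty$, I would construct an explicit $y$-dependent competitor with $\mathcal S^\natural(\tilde u)<(2\pi)^m\widehat\gamma_1$, e.g.\ by perturbing the lifted ground state to $\hat u_1(x)(1+\varepsilon\cos y_1)$ and rescaling back to $\{K=0\}$; a second-order expansion shows that $\mathcal S^\natural$ decreases at order $\varepsilon^2$ while $\|\nabla_y\tilde u\|^2$ grows at order $\varepsilon^2$, yielding a finite ratio $\|\nabla_y\tilde u\|^2/[(2\pi)^m\widehat\gamma_1-\mathcal S^\natural(\tilde u)]$ and hence a finite threshold above which the competitor beats the lift. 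For $\omega_*>0$, I would perform a quantitative second-variation analysis at the lifted ground state: for perturbations $\eta(x,y)=\psi(x)\phi(y)$ with $\int_{\T^m}\phi=0$, the spectral gap $1$ of $-\Delta_y$ on zero-mean functions combined with the coercivity of the constrained Hessian of $\mathcal S^\natural$ on this subspace yields a uniform positive lower bound on the above ratio, ensuring $\omega_*>0$.

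The moreover statements then follow from the scaling identity. For (ii) with $\omega>\omega_*$, a $y$-independent minimizer would give $S_\omega\geq (2\pi)^m\widehat\gamma_\omega>\gamma_\omega$, a contradiction. For (i) with $\omega\in(0,\omega_*)$, suppose a minimizer $u=T_\omega[\tilde u]$ satisfies $\nabla_y\tilde u\neq 0$; the identity reads $\mathcal S^\natural(\tilde u)+(2\omega)^{-1}\|\nabla_y\tilde u\|^2=(2\pi)^m\widehat\gamma_1$, and for every $\omega'>\omega$ the same profile $\tilde u$ gives
\[
(\omega')^{-a_d}S_{\omega'}(T_{\omega'}[\tilde u])\;=\;\mathcal S^\natural(\tilde u)+(2\omega')^{-1}\|\nabla_y\tilde u\|^2\;<\;(2\pi)^m\widehat\gamma_1,
\]
forcing $\gamma_{\omega'}<(2\pi)^m\widehat\gamma_{\omega'}$ for every $\omega'>\omega$ and hence $\omega_*\leq\omega$, contradicting $\omega<\omega_*$. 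The main obstacle I foresee is the quantitative second-variation estimate underlying $\omega_*>0$, together with the compactness/lower-semicontinuity needed to ensure that the infima above are realized by bona fide profiles $\tilde u\in\{K=0\}$.
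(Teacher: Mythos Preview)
Your overall scaffolding---rescaling in $x$ to produce the one-parameter family $\mathcal S^\natural(\tilde u)+\frac{1}{2\omega}\|\nabla_y\tilde u\|^2$, observing its monotonicity in $\omega$, and deducing that the equality set is a downward interval---is exactly the paper's strategy (packaged there as the auxiliary problem $\gamma_{1,\omega}$), and your ``moreover'' arguments coincide with the paper's. For $\omega_*<\infty$ your perturbative competitor $\hat u_1(x)(1+\varepsilon\cos y_1)$ projected back onto $\{K=0\}$ does work after a second-order expansion, though the paper instead constructs an explicit separated profile $\rho(y_1)\cdots\rho(y_m)P(x)$ with a piecewise-linear $\rho$ tuned so that $\|\rho\|_{L^2(\T)}^2=\|\rho\|_{L^{\alpha+2}(\T)}^{\alpha+2}<2\pi$, which keeps $K$ exactly zero and avoids any Taylor bookkeeping.

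The genuine gap, which you correctly flag, is your plan for $\omega_*>0$. A Hessian estimate at the lifted ground state is purely local: it shows the lift is a strict local minimizer of $\mathcal S^\natural+\lambda\|\nabla_y\cdot\|^2$ for large $\lambda$, but cannot rule out a far-away competitor $\tilde u\in\{K=0\}$ with small $\|\nabla_y\tilde u\|$ yet $\mathcal S^\natural(\tilde u)$ well below $(2\pi)^m\widehat\gamma_1$. What you actually need is a \emph{global} lower bound on the ratio over all of $\{K=0\}$, which no amount of second variation at a single point provides. The paper closes this by a compactness-plus-PDE route: first, minimizers $u_{1,\omega}$ are shown to converge \emph{strongly} in $H^1_{x,y}$ to a $y$-independent ground state $u$ as $\omega\to 0$ (using existence of minimizers from Theorem~\ref{thm existence ground states 1} and passage to the limit in the Euler--Lagrange equation); then one differentiates the equation in $y$ to obtain a linear equation for $w_\omega:=\nabla_y u_{1,\omega}$, tests it against $w_\omega$, and combines the Poincar\'e inequality $\|w_\omega\|_2\leq\|\nabla_y w_\omega\|_2$, the bound $u\in L^\infty_x$, and the strong convergence $u_{1,\omega}\to u$ to force $w_\omega\equiv 0$ once $\omega$ is small. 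This is a linearization argument in spirit, but executed on the PDE and fed by global compactness rather than by a local Hessian. You also do not address the endpoint $\omega=\omega_*$ in (i); monotonicity alone gives only an interval $(0,\omega_*)$ or $(0,\omega_*]$, and the paper settles the closed endpoint separately via continuity of $\omega\mapsto\gamma_\omega$, which again relies on the existence of minimizers.
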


With the help of Theorem \ref{thm existence ground states 1} and \ref{thm threshold frequency}, we are able to establish the following Legendre-Fenchel identity in the context of the mass-(super)critical model.

\begin{theorem}[The Legendre-Fenchel identity]\label{thm legendre}
Let $\alpha\in[\tas,\tbs)$. For $c>0$, define
\begin{align*}
A_c:=\{\omega>0:\text{ there exists $u\in H^1(\R^d\times\T^m)$ with $u\in\mathrm{argmin}\,\gamma_\omega$ and $M(u)=c$}\},
\end{align*}
where $\mathrm{argmin}\,\gamma_\omega$ denotes the set of minimizers of the variational problem $\gamma_\omega$. Denote also by $Q$ the unique radial and positive solution of \[-\Delta_x Q+Q=Q^{1+\frac{4}{d}}\quad\text{on $\R^d$.}\]
Then
\begin{itemize}
\item[(i)] If $\alpha\in (\tas,\tbs)$ and $m=0$, then $A_c\neq\varnothing$ for any $c\in(0,\infty)$.

\item[(ii)]  There exists
\begin{align*}
(c_n)_n\subset\left\{
\begin{array}{cl}
(0,\infty),&\text{if $\alpha\in(\tas,\tbs)$ and $m=1$},\\
\\
(0,2\pi\widehat M(Q)),&\text{if $\alpha=\tas$ and $m=1$}
\end{array}
\right.
\end{align*}
such that $\lim_{n\to\infty}c_n=0$ and $A_{c_n}\neq\varnothing$ for any $n\in\N$.
\end{itemize}
Moreover, for any $A_c\neq\varnothing$ and $\omega\in A_c$ we have
\begin{align}\label{lengendre identity}
m_c=\gamma_\omega-\frac{1}{2}c\omega.
\end{align}
\end{theorem}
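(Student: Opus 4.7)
The plan separates the two assertions of the theorem: the identity \eqref{lengendre identity}, which follows from a soft duality argument, and the non-emptiness of $A_c$ in (i)--(ii), which carries the technical content.

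For the identity, fix $\omega\in A_c$ and any minimizer $u\in\mathrm{argmin}\,\gamma_\omega$ with $M(u)=c$. Since $u$ satisfies the constraint $K(u)=0$ imposed by $\gamma_\omega$ and $M(u)=c$ by assumption, $u$ is admissible for $m_c$, so
\[m_c\leq E(u)=S_\omega(u)-\tfrac{\omega}{2}M(u)=\gamma_\omega-\tfrac{c\omega}{2}.\]
Conversely, every $v\in H^1(\R^d\times\T^m)$ admissible for $m_c$ has $M(v)=c$ and $K(v)=0$, hence is admissible for $\gamma_\omega$, whence $\gamma_\omega\leq S_\omega(v)=E(v)+\tfrac{c\omega}{2}$; infimizing over $v$ gives $\gamma_\omega-\tfrac{c\omega}{2}\leq m_c$ and establishes \eqref{lengendre identity}.

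For (i), the standard rescaling $u_\omega(x)=\omega^{1/\alpha}u_1(\sqrt\omega\,x)$ of a minimizer $u_1$ of $\widehat\gamma_1$ (produced by Theorem \ref{thm existence ground states 1}) yields a minimizer of $\widehat\gamma_\omega$ at every $\omega>0$ with mass $\widehat M(u_\omega)=\omega^{2/\alpha-d/2}\widehat M(u_1)$. In the intercritical regime this exponent is strictly negative, so the mass sweeps continuously through $(0,\infty)$ as $\omega$ does, realizing every $c>0$. For (ii), the strategy is to construct, along a sequence $\omega_n\to\infty$, a minimizer $u_{\omega_n}$ of $\gamma_{\omega_n}$ with $M(u_{\omega_n})\to 0$. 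The natural device is the fully symmetric rescaling
\[u_\omega(x,y)=\omega^{1/\alpha}v_\omega(\sqrt\omega\,x,\sqrt\omega\,y),\qquad v_\omega\in H^1(\R^d\times\sqrt\omega\,\T),\]
which sends \eqref{nls2} to the parameter-free equation $-\Delta v+v=|v|^\alpha v$ on the expanding cylinder $\R^d\times\sqrt\omega\,\T$ and yields the mass relation $M(u_\omega)=\omega^{2/\alpha-(d+1)/2}M(v_\omega)$. Because $\alpha\in[\tas,\tbs)$ with $m=1$ lies strictly inside the intercritical range $(4/(d+1),4/(d-1))$ of $\R^{d+1}$, an $\R^{d+1}$-ground state of $-\Delta w+w=|w|^\alpha w$ exists and, after a suitable cutoff and $x$-rescaling, can be planted in $\R^d\times\sqrt\omega\,\T$ as a uniform-in-$\omega$ admissible competitor. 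Combined with the Pohozaev-type identity relating $\|v_\omega\|^{\alpha+2}_{\alpha+2}$ to the rescaled ground state action, this produces a uniform $L^2$-bound on $v_\omega$; since $2/\alpha-(d+1)/2\leq-1/2<0$ under \eqref{add}, we conclude $M(u_\omega)\to 0$. The containment $c_n<2\pi\widehat M(Q)$ in the mass-critical subcase is automatic once $c_n\to 0$.

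The delicate step is making the $\omega\to\infty$ limit of the rescaled problem rigorous: one must exclude the scenario in which $v_\omega$ spreads along the growing $\T$-factor and degenerates to a $y$-independent $\R^d$-profile, a possibility that in the mass-critical case would pin $M(u_\omega)$ at the threshold $2\pi\widehat M(Q)$ rather than drive it to zero. Ruling this out calls for a concentration-compactness or profile-decomposition analysis on the waveguide, hinging on the strict inequality $\gamma_\omega<(2\pi)\widehat\gamma_\omega$ for $\omega>\omega_*$ provided by Theorem \ref{thm threshold frequency}(ii), and constitutes the main technical challenge underlying (ii).
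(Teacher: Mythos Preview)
Your argument for the identity \eqref{lengendre identity} and for part (i) matches the paper's proof.

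For part (ii), your approach via the symmetric rescaling onto the expanding cylinder $\R^d\times\sqrt\omega\,\T$ is workable and closely parallels the paper's Lemma \ref{lemma 4.1}, which reaches the same conclusion without passing to the expanding cylinder. The paper plants a fixed $\phi\in C_c^\infty(\R^{d+m})$ supported in the unit ball (adjusted once so that $K(\phi)=0$) directly into $\R^d\times\T^m$ via the symmetric scaling $\mathcal{L}_\mu\phi(x,y)=\mu^{2/\alpha}\phi(\mu x,\mu y)$ with $\mu\sim\sqrt\omega$, then optimizes $S_\omega(\mathcal{L}_\mu\phi)$ over $\mu$ to obtain $\gamma_\omega\lesssim\omega^{1+2/\alpha-(d+m)/2}$. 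Combining this with the elementary inequality $\tfrac{\omega}{2}M(u_\omega)\leq I_\omega(u_\omega)=\gamma_\omega$ (immediate from \eqref{def of mI beta} and $K(u_\omega)=0$) gives $M(u_\omega)\lesssim\omega^{2/\alpha-(d+m)/2}\to 0$. Your rescaled version encodes the same computation in different variables.

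Where your proposal misfires is the final paragraph. The concern you raise---that one must rule out $v_\omega$ spreading along the growing $\T$-factor via concentration-compactness or profile decomposition---is a phantom difficulty. No limiting behaviour of $v_\omega$ is required: the competitor you plant is a \emph{fixed} compactly supported function, independent of $\omega$, so its action on $\R^d\times\sqrt\omega\,\T$ is a fixed constant $C$ for all large $\omega$; hence the rescaled minimum is bounded by $C$, and the inequality $\tfrac12 M(v_\omega)\leq \tilde I_1(v_\omega)=\tilde S_1(v_\omega)\leq C$ already delivers $M(v_\omega)\leq 2C$ uniformly. Then $M(u_\omega)=\omega^{2/\alpha-(d+1)/2}M(v_\omega)\to 0$ and you are done. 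Neither Theorem \ref{thm threshold frequency}(ii) nor any compactness machinery enters. Your sketch of (ii) is therefore complete one paragraph earlier than you believe; the paper's route is marginally cleaner only because it avoids the expanding-cylinder bookkeeping.
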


We briefly elaborate on the idea for proving Theorem \ref{thm legendre}. In fact, \eqref{lengendre identity} will immediately follow from the definitions of the variational problems as long as we can prove that $A_c$ is not empty. We will then use Theorem \ref{thm threshold frequency} and suitable test function arguments to prove the latter claim.

\begin{remark}
Since \eqref{lengendre identity} is valid for any $\omega\in A_c$, we may rewrite \eqref{lengendre identity} into
$$m_c=\inf_{\omega\in A_c}(\gamma_\omega-\frac{1}{2}c\omega)=\inf_{\omega\in A_c}(\beta_\omega-\frac{1}{2}c\omega).$$
Notice on the other hand that by definition, the quantity on the r.h.s. of \eqref{lf identity rd} will not exceed the second quantity on the r.h.s. of \eqref{lengendre identity}. One may naturally ask whether \eqref{lf identity rd} will indeed hold in place of \eqref{lengendre identity} for $m_c$ with the additional constraint $K(u)=0$. We note that this can not be the case: as a consequence of Lemma \ref{lemma 4.1} we have $\lim_{\omega\to \infty}(\gamma_\omega-\frac12 c\omega)=\lim_{\omega\to \infty}(\beta_\omega-\frac12 c\omega)=-\infty$, while by definition we always have $m_c\geq 0$. This reveals the fact that \eqref{lengendre identity} improves \eqref{lf identity rd} in a strict sense.
\end{remark}
\begin{remark}
In the case $\alpha=\tas$ and $m=0$, we have a more precise description on the quantities given in Theorem \ref{thm legendre} which may be directly deduced from the classical theories for the mass-critical NLS (see e.g. \cite{weinstein,Kwong_uniqueness}): In fact, we have $A_c=B_c=\varnothing$ for $c<\widehat M(Q)$, $A_c=B_c=(0,\infty)$ for $c=\widehat M(Q)$,
\begin{align*}
m_c=
\left\{
\begin{array}{cl}
\infty,&\text{if $c\in(0,\widehat M(Q))$},\\
\\
0,&\text{if $c=\widehat M(Q)$}
\end{array}
\right.
\end{align*}
and $\gamma_\omega=\frac{1}{2}\omega \widehat M(Q)$ for any $\omega>0$ (where we made the usual convention $\inf\varnothing=\infty$).
\end{remark}

\begin{remark}
In the case $m=1$ we conjecture that $A_c\neq \varnothing $ for any $c\in(0,\infty)$ ($\alpha\in(\tas,\tbs)$) and $c\in(0,2\pi\widehat M(Q))$ ($\alpha=\tas$). The main issue here is that the mapping $\omega\mapsto M(u_\omega)$, where $u_\omega$ is an optimizer of $\gamma_\omega$, is not necessarily well-defined, since $M(u_\omega)$ is not necessarily uniquely determined (there is the $y$-gradient energy competing with the mass in the Pohozaev's identity). In the case $m=0$, however, we may use the classical Pohozaev's identity to show that the value of $M(u_\omega)$ is indeed independent of the choice of the minimizer.
\end{remark}

At the first glance, Theorem \ref{thm legendre} merely confirms the duality of the variational problems $m_c$ and $\gamma_\omega$ in term of the Legendre-Fenchel identity \eqref{lengendre identity}, which seemingly admits limited applications. We will show that despite its concise form, \eqref{lengendre identity} plays a key role for proving results that are barely possible to be shown directly. We recall that we encounter the difficulty that the Laplacian and the nonlinear potential share the same scaling when solving directly the variational problem $m_c$ in the mass-critical setting. With the help of Theorem \ref{thm legendre}, we are in fact able to obtain optimizers of $m_c$ by solving an equivalent problem deduced from \eqref{lengendre identity}. More precisely, we have the following existence result:

\begin{theorem}[Normalized ground states in the mass-critical case]\label{thm threshold mass 2}
Let $\alpha=\tas$ and $m=1$. Then for any $c\in (0,2\pi  \widehat{M}(Q))$ with $A_c\neq\varnothing$ the following statements hold:
\begin{itemize}
\item[(i)]The variational problem $m_c$ has an optimizer $u_c$ that solves \eqref{nls2} with some $\omega
=\omega_c\geq \omega_*$, where $\omega_*$ is the number given by Theorem \ref{thm threshold frequency}.

\item[(ii)]For any $t>0$ and any optimizer $u_c$ of $m_c$, the function $v^t$ defined by
\[v^t(x,y)=t^{\frac{d}{2}}u_c(tx,y)\]
is also an optimizer of $m_c$.

\item[(iii)] Any optimizer $u_c$ of $m_c$ satisfies $\nabla_y u_c\neq 0$.
\item[(iv)] There exist optimizers $u_c^\gamma$ and $u_c^\beta$ of $m_c$ and $\omega_c^\gamma,\omega_c^\beta\geq \omega_*$ such that $u_c^\gamma$ and $u_c^\beta$ are optimizers of $\gamma_{\omega_c^\gamma}$ and $\beta_{\omega_c^\beta}$ respectively.
\end{itemize}
\end{theorem}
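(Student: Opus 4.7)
My plan is to use the Legendre-Fenchel identity \eqref{lengendre identity} as a bridge from the tractable frequency-constrained problem $\gamma_\omega$ to the mass-constrained problem $m_c$. Since $A_c\neq\varnothing$ by hypothesis, I would fix once and for all some $\omega\in A_c$ and, appealing to Theorem~\ref{thm existence ground states 1}(i), a minimizer $u_\omega\in\mathrm{argmin}\,\gamma_\omega$ that simultaneously satisfies $M(u_\omega)=c$ and solves \eqref{nls2} at frequency $\omega$. All four conclusions will be deduced from this single $u_\omega$, circumventing the mass-critical scaling obstruction noted in the introduction.

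For (i), note that the constraint built into $\gamma_\omega$ yields $K(u_\omega)=0$, which together with $M(u_\omega)=c$ places $u_\omega$ in the admissible set of $m_c$. Using $S_\omega=E+\tfrac{\omega}{2}M$ and \eqref{lengendre identity},
\[E(u_\omega)=S_\omega(u_\omega)-\tfrac{\omega}{2}M(u_\omega)=\gamma_\omega-\tfrac{c\omega}{2}=m_c,\]
so $u_\omega$ attains $m_c$. To prove $\omega\geq\omega_*$ I would argue by contradiction: if $\omega<\omega_*$, then Theorem~\ref{thm threshold frequency}(i) forces $\nabla_y u_\omega=0$, so $u_\omega(x,y)=v(x)$ for some $v\in H^1(\R^d)$ solving the mass-critical equation $-\Delta_x v+\omega v=|v|^{4/d}v$. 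Classical rigidity shows every such positive solution is an $\omega$-rescaling of $Q$, in particular $\widehat M(v)=\widehat M(Q)$; hence $c=M(u_\omega)=2\pi\widehat M(v)=2\pi\widehat M(Q)$, contradicting $c<2\pi\widehat M(Q)$.

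Claims (ii)--(iv) will then follow quickly. For (ii), a direct computation using $\alpha=\tas$ gives $M(v^t)=M(u_c)=c$ and $K(v^t)=t^2K(u_c)=0$, and on $\{K=0\}$ the mass-critical identity $\|\nabla_x u\|_2^2=\tfrac{2}{\alpha+2}\|u\|_{\alpha+2}^{\alpha+2}$ collapses the energy to $E(u)=\tfrac12\|\nabla_y u\|_2^2$, a quantity manifestly preserved under the $x$-scaling $v^t$; hence $E(v^t)=E(u_c)=m_c$. For (iii), if $\nabla_y u_c=0$ then $u_c(x,y)=v(x)$ with $\widehat M(v)=c/(2\pi)<\widehat M(Q)$ and $\widehat K(v)=0$, but Weinstein's sharp Gagliardo-Nirenberg inequality forces $\widehat K(v)>0$ for every non-zero $v$ with $\widehat M(v)<\widehat M(Q)$, contradicting $M(u_c)=c>0$. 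For (iv), I would set $u_c^\gamma=u_c^\beta:=u_\omega$ and $\omega_c^\gamma=\omega_c^\beta:=\omega$: $\gamma$-optimality holds by construction, while $u_\omega$ solving \eqref{nls2} gives the Nehari identity $N_\omega(u_\omega)=0$, so together with Proposition~\ref{prop iden of gamma and beta} (which yields $\beta_\omega=\gamma_\omega=S_\omega(u_\omega)$) it is also a $\beta_\omega$-optimizer. Both frequencies lie above $\omega_*$ by the argument of (i).

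The genuinely delicate step will be the lower bound $\omega_c\geq\omega_*$ in (i); this is precisely where the geometric distinction between $\R^d$ and $\R^d\times\T$ enters, and it requires combining Theorem~\ref{thm threshold frequency} (to exclude $y$-independent minimizers) with the rigidity of mass-critical $\R^d$ ground states (all having the fixed mass $\widehat M(Q)$). Once this bound is in place and \eqref{lengendre identity} is invoked, the rest is a clean bookkeeping exercise, showcasing exactly the pay-off of the Legendre-Fenchel duality: to sidestep the failure of the scaling $u\mapsto t^{d/2}u(t\cdot,\cdot)$ to perturb the energy at the mass-critical exponent.
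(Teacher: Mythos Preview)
Your proposal is correct and follows essentially the same route as the paper: the Legendre--Fenchel identity \eqref{lengendre identity} transports a $\gamma_\omega$-minimizer with mass $c$ into an $m_c$-minimizer, and both the bound $\omega\geq\omega_*$ and the $y$-dependence in (iii) come from combining Theorem~\ref{thm threshold frequency} with the mass rigidity of mass-critical ground states on $\R^d$. One small point worth tightening: in your argument for (i) you invoke positivity of $u_\omega$ to apply Kwong-type uniqueness, but an arbitrary element of $\mathrm{argmin}\,\gamma_\omega$ with $M(u_\omega)=c$ need not be positive; it is cleaner to reuse your own (iii) reasoning---$\widehat K(v)=0$ with $v\neq 0$ forces $\widehat M(v)\geq\widehat M(Q)$ by the sharp Gagliardo--Nirenberg inequality---which yields the same contradiction $c=2\pi\widehat M(v)\geq 2\pi\widehat M(Q)$ without any positivity assumption.
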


\begin{remark}
Using Theorem \ref{thm legendre} we may also establish a similar existence result for the focusing intercritical NLS. Nevertheless, by solving the problem $m_c$ directly a corresponding existence result has already been given in \cite{Luo_inter}. We thus focus in this paper on the mass-critical case and refer to \cite{Luo_inter} for more details about the intercritical model.
\end{remark}

\begin{remark}
Theorem \ref{thm threshold mass 2} implies particularly that the focusing quintic NLS on $\R\times\T$ and focusing cubic NLS on $\R^2\times\T$, which being physically relevant cases, possess normalized ground state solutions which have mass arbitrarily close to zero and have non-trivial $y$-portion.
\end{remark}

Notice again that due to the failure of the scaling arguments, it is rather difficult to find for a given number $c\in (0,2\pi\widehat M(Q))$ a candidate $u\in H^1(\R^d\times\T)$ satisfying $M(u)=c$ and $K(u)=0$. It turns out that Lemma \ref{lemma 4.1} given below, which being a crucial ingredient for proving Theorem \ref{thm legendre}, will indeed guarantee the existence of such a candidate. In conjunction with suitable surgery arguments, we are then able to prove the following dynamical properties of the mapping $c\mapsto m_c$ which play a central role in proving the large data scattering results.

\begin{proposition}\label{prop monotone}
Let $\alpha=\tas$ and $m=1$. Then the mapping $c\mapsto m_c$ is monotone decreasing and lower semicontinuous on $(0,2\pi\widehat M(Q))$.
\end{proposition}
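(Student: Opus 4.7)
My strategy is to combine the surgery technique alluded to immediately above the statement with the candidate-construction provided by Lemma~\ref{lemma 4.1}. The core mechanism is the same for both assertions: starting from a near-minimizer $u$ of $m_{c_0}$, I attach a small profile $\phi$ to $u$ at spatial infinity (where the interaction terms decay), and then tune parameters to restore $M=c$ and $K=0$ exactly. The obstruction peculiar to the mass-critical case is that the scaling $u\mapsto t^{d/2}u(tx,y)$ from Theorem~\ref{thm existence ground states 1}(ii) preserves both $M(u)$ and the $K=0$ constraint (in fact $K(u^t)=t^2 K(u)$), so a single-parameter scaling alone cannot adjust mass while keeping $K=0$; the correction step must therefore couple amplitude rescaling, spatial rescaling, and, crucially, the $y$-degree of freedom.

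For monotonicity, fix $0<c_1<c_2<2\pi\widehat M(Q)$ and $\varepsilon>0$, and pick $u$ with $M(u)=c_1$, $K(u)=0$, $E(u)\le m_{c_1}+\varepsilon$. Using Lemma~\ref{lemma 4.1}, I produce $\phi\in H^1(\R^d\times\T)$ with $M(\phi)=c_2-c_1$, $K(\phi)=0$ and $E(\phi)\le\varepsilon$; the latter bound exploits the observation that on the scaling family of Theorem~\ref{thm existence ground states 1}(ii), the energy of a ground state of $\gamma_\omega$ equals $\frac{1}{2}\|\nabla_y u_\omega\|_2^2$, which can be driven small by selecting $\omega$ close to the threshold $\omega_*$ of Theorem~\ref{thm threshold frequency}. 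Translating $\phi_R(x,y)=\phi(x-Re_1,y)$, the cross terms in $M$, $K$ and $E$ between $u$ and $\phi_R$ vanish as $R\to\infty$, yielding $M(u+\phi_R)=c_2+o_R(1)$, $K(u+\phi_R)=o_R(1)$ and $E(u+\phi_R)\le m_{c_1}+2\varepsilon+o_R(1)$. A final correction combining amplitude and $x$-spatial rescaling (with the degenerate direction transverse to the mass-critical scaling broken by the non-trivial $y$-profile of $\phi$) restores the exact constraints at $o_R(1)$ energy cost, and sending $R\to\infty$, $\varepsilon\to 0$ gives $m_{c_2}\le m_{c_1}$.

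For lower semicontinuity at $c\in(0,2\pi\widehat M(Q))$, take $c_n\to c$ and near-minimizers $u_n$ of $m_{c_n}$. The identity $K(u_n)=0$ combined with a Gagliardo-Nirenberg-type inequality on $\R^d\times\T$ and the bound $c_n<2\pi\widehat M(Q)$ yields uniform $H^1$-control on $(u_n)$. Passing to subsequences according to whether $c_n\le c$ or $c_n\ge c$ and running the previous surgery with a Lemma~\ref{lemma 4.1} profile $\phi_n$ of vanishing mass $|c-c_n|\to 0$ glued at infinity (respectively, the analogous amplitude-and-scaling construction that removes the excess mass), I obtain $\tilde u_n$ with $M(\tilde u_n)=c$, $K(\tilde u_n)=0$ and $E(\tilde u_n)\le m_{c_n}+o(1)$, whence $m_c\le\liminf_n m_{c_n}$.

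The main obstacle I anticipate is the final correction step: the two-parameter Jacobian governing simultaneous amplitude and $x$-rescaling is singular at the identity in the mass-critical regime (the singular direction being precisely the scaling of Theorem~\ref{thm existence ground states 1}(ii)), so restoring both constraints requires breaking this degeneracy through the $y$-direction---by insisting that $\phi$ have non-trivial $y$-dependence, which is available for $c_2-c_1$ below $2\pi\widehat M(Q)$ by Theorems~\ref{thm threshold frequency} and \ref{thm threshold mass 2}, and then solving the resulting non-degenerate system via an implicit function theorem argument applied to the small defects produced by the far-field gluing.
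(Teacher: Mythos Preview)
Your monotonicity argument has a genuine gap exactly where you flag it: the ``final correction step.'' By insisting that the glued profile $\phi$ already satisfy $K(\phi)=0$, you throw away the one free parameter that makes the correction possible. The paper's route is the opposite: take $\phi\in C_c^\infty(\R^d)$ \emph{independent of $y$} with $\widehat M(\phi)=(2\pi)^{-1}(c_2-c_1)<\widehat M(Q)$; the sharp Gagliardo--Nirenberg inequality on $\R^d$ then forces $K(\phi)>0$ strictly. One first perturbs the near-minimizer $u_b$ by an amplitude factor $t>1$ (and a cutoff) to make $K$ strictly negative, and then exploits that the mass-critical $x$-scaling $\phi^q(x)=q^{d/2}s\phi(qx)$ satisfies $K(\phi^q)=q^2K(s\phi)$, which sweeps out \emph{all} positive reals as $q$ varies. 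So one can choose $q$ to hit $K(u_1)+K(\phi^q)=0$ exactly, and the amplitude $s\in(0,1)$ to hit the mass exactly. No implicit-function argument and no degeneracy issue arise. The $y$-independence of $\phi$ is then the punchline: on $\{K=0\}$ in the mass-critical case one has $E=\frac12\|\nabla_y\cdot\|_2^2$, and since $\phi$ carries no $y$-gradient, $E(u_1+\phi^q)=\frac12\|\nabla_y u_1\|_2^2\le m_{c_1}+O(\varepsilon)$. Your proposal to use a $y$-dependent $\phi$ to break the Jacobian degeneracy goes in the wrong direction, and your claim that one can realize an \emph{arbitrary} mass $c_2-c_1$ with $K(\phi)=0$ and $E(\phi)$ small is not supported by Lemma~\ref{lemma 4.1} (which only yields a sequence of masses, and near $\omega_*$ the mass of $u_\omega$ is close to $2\pi\widehat M(Q)$, not to an arbitrary prescribed value).

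For lower semicontinuity your surgery plan is also more delicate than necessary and inherits the same correction issue. The paper's argument is short and different in character: by the monotonicity just proved it suffices to check right-continuity; for $c_n\searrow c$ take near-minimizers $u_n\in V(c_n)$, use the mass-critical $x$-scaling freedom to normalize $\|\nabla_x u_n\|_2^2=\tfrac12\|u_n\|_{2_*+2}^{2_*+2}=1$, extract a nonzero weak limit $u$ via Lemma~\ref{lemma non vanishing limit}, and conclude
\[
m_c\le m_{M(u)}=\tfrac12\|\nabla_y u\|_2^2\le\liminf_n\tfrac12\|\nabla_y u_n\|_2^2=\liminf_n m_{c_n}
\]
using weak lower semicontinuity and the monotonicity of $c\mapsto m_c$. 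No gluing is involved.
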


Finally, following the same arguments as the ones given in \cite{Luo_JFA_2022,Luo_DoubleCritical,Luo_inter,Luo_energy_crit}, we establish the following large data scattering and blow-up results which sharpen the ones deduced in \cite{Luo_Waveguide_MassCritical}.

\begin{theorem}[Large data global well-posedness and scattering below ground states]\label{thm large data}
Let $\alpha=\tas$, $m=1$ and let $u$ be a solution of \eqref{nls}. If
\begin{align}\label{gwp condition}
M(u)<2\pi \widehat{M}(Q),\quad E(u)<m_{M(u)},\quad K(u(0))>0,
\end{align}
then $u$ is a global solution of \eqref{nls}. If additionally $d=2$ and $M(u)<\pi \widehat{M}(Q)$, then $u$ also scatters in time in the sense that there exist $\phi^\pm\in H^1(\R^2\times\T)$ such that
\begin{align*}
\lim_{t\to\pm\infty}\|u(t)-e^{it\Delta_{x,y}}\phi^{\pm}\|_{H^1(\R^2\times\T)}=0.
\end{align*}
\end{theorem}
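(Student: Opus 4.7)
The plan is to separate the two claims: first establish that the condition set \eqref{gwp condition} is invariant under the NLS flow (yielding global existence), and then, under the stronger mass assumption and $d=2$, upgrade this to scattering via a concentration--compactness and rigidity scheme in the spirit of \cite{Luo_JFA_2022,Luo_DoubleCritical,Luo_inter,Luo_energy_crit}.

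For the global well-posedness claim, the conditions $M(u)<2\pi\widehat{M}(Q)$ and $E(u)<m_{M(u)}$ persist along the flow by conservation of mass and energy. The crucial point is that $K(u(t))>0$ is also preserved. Indeed, if there were a first time $t_0$ with $K(u(t_0))=0$, then since $u(t_0)\neq 0$ by mass conservation, $u(t_0)$ would be admissible for the variational problem defining $m_{M(u(t_0))}=m_{M(u(0))}$, forcing $E(u(t_0))\geq m_{M(u(0))}$, which contradicts energy conservation. Combining the sign preservation $K>0$ with conservation of energy yields an a priori $H^1$-bound through the sharp mass-critical Gagliardo--Nirenberg inequality; the threshold $M(u)<2\pi\widehat M(Q)$ is exactly what is needed so that this bound closes. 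The blow-up alternative then upgrades local to global existence.

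For the scattering claim I would follow the Kenig--Merle roadmap adapted to the waveguide setting. After recording small data scattering in $H^1(\R^2\times\T)$ and a suitable long-time perturbation lemma, the argument proceeds by contradiction: assuming scattering fails on the sublevel set prescribed by \eqref{gwp condition} with the additional constraint $M(u)<\pi\widehat M(Q)$, a linear profile decomposition on $\R^2\times\T$ produces a minimal-mass critical element $u_c$ whose orbit is precompact modulo the symmetries of \eqref{nls}. The profiles split into two classes: those that persist at the waveguide scale and those that, after rescaling the $y$-variable, concentrate as Euclidean profiles on $\R^3$. The scattering threshold $\pi\widehat M(Q)=\tfrac{1}{2}\cdot 2\pi\widehat M(Q)$ is exactly what is needed to rule out a nontrivial bifurcation into two interacting profiles while keeping all candidate waveguide profiles strictly below the global well-posedness threshold, a mechanism familiar from the earlier works cited above.

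The last and hardest step is the rigidity argument, which I expect to be the main obstacle. In the mass-critical setting the virial functional does not enjoy the scaling-driven coercivity available in the intercritical case treated in \cite{Luo_inter,Luo_energy_crit}, so the positivity $K(u_c(t))>0$ cannot be upgraded to a quantitative lower bound by a rescaling argument alone. Here Proposition \ref{prop monotone} and the Legendre--Fenchel identity \eqref{lengendre identity} enter decisively: the monotonicity and lower semicontinuity of $c\mapsto m_c$, together with mass and energy conservation of the critical element, yield a uniform coercivity gap $K(u_c(t))\geq\delta>0$. Feeding this gap into a localized virial identity tailored to the $\R^2$-factor then contradicts the precompactness of the orbit and completes the proof.
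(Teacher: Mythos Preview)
Your overall roadmap (invariance of the sublevel set for global existence, then a Kenig--Merle compactness--rigidity scheme culminating in a localized virial contradiction) matches the paper, but two points are genuinely off and would derail the argument as written.

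First, in the global well-posedness step, the sign condition $K(u)>0$ alone yields only $E(u)>\tfrac12\|\nabla_y u\|_2^2$; in the mass-critical regime $\alpha=4/d$ the quantity $\|\nabla_x u\|_2^2$ is \emph{not} controlled by $K>0$ together with a sharp Gagliardo--Nirenberg inequality on $\R^d\times\T$ --- no such sharp inequality with the threshold $2\pi\widehat M(Q)$ is available on the product space. The paper closes this gap differently: it proves a coercivity estimate (Lemma~\ref{lemma coercivity}) by compactness. Assuming the coercivity constant degenerates along a sequence, one normalizes via the $x$-scaling $t^{d/2}u(tx,y)$, extracts a weak limit $u$ with $K(u)=0$ and $M(u)\le c$, and contradicts $E<m_c-\nu$ using the monotonicity of $c\mapsto m_c$ from Proposition~\ref{prop monotone}. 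This is where the variational structure truly enters the a priori bound, not a Gagliardo--Nirenberg constant.

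Second, and more seriously, your description of the large-scale profiles is incorrect. The scaling in the profile decomposition (Lemma~\ref{linear profile}) acts \emph{only} on the $x$-variable; the torus factor is never rescaled. When $\lambda_n^j\to\infty$ the bubble spreads out in $\R^2$, and the effective limiting dynamics is not the Euclidean NLS on $\R^3$ but the focusing mass-critical \emph{resonant system} obtained by decoupling the $y$-Fourier modes (cf.\ the Remark following Theorem~\ref{thm large data} and Lemma~\ref{cnls lem large scale proxy}). Correspondingly, the extra restriction $M(u)<\pi\widehat M(Q)$ is not a device to ``rule out bifurcation into two interacting profiles''; it is precisely the mass threshold under which this resonant system is known to scatter, which is what allows one to build global nonlinear approximants for the large-scale bubbles and then invoke the stability Lemma~\ref{lem stability cnls}. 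Without this, the nonlinear profile associated to a large-scale bubble cannot be controlled and the Palais--Smale step (Lemma~\ref{Palais Smale}) collapses.
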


\begin{theorem}[Finite time blow-up below ground states]\label{thm blow up}
Let $\alpha=\tas$, $m=1$ and let $u$ be a solution of \eqref{nls}. If $|x|u(0)\in L^2(\R^d\times\T)$ and
\[M(u)<2\pi \widehat M(Q),\quad \mH(u)<m_{\mM(u)},\quad \mK(u(0))<0,\]
then $u$ blows-up in finite time.
\end{theorem}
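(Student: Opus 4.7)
The plan is to run a Glassey-type concavity argument on the variance, in the same spirit as the blow-up results in \cite{Luo_JFA_2022,Luo_DoubleCritical,Luo_inter,Luo_energy_crit} but adapted to the mass-critical scaling. Since the weight $|x|^2$ depends only on the Euclidean factor, the standard derivation carries over verbatim from $\R^d$: using persistence of the weighted $L^2$ datum together with a truncation/approximation argument, one checks that $V(t):=\int_{\R^d\times\T}|x|^2|u(t,x,y)|^2\,dx\,dy$ lies in $C^2(I_{\max})$ and satisfies $V''(t)=8K(u(t))$. Because $V\geq 0$, the whole proof reduces to producing a uniform constant $\eta>0$ with $K(u(t))\leq -\eta$ on the maximal existence interval $I_{\max}$: integrating twice yields $V(t)\leq V(0)+V'(0)t-4\eta t^2$, which becomes negative in finite time and contradicts $V\geq 0$, forcing $I_{\max}$ to be finite.

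Showing qualitatively that $K(u(t))<0$ throughout $I_{\max}$ is a routine variational trapping: the map $t\mapsto K(u(t))$ is continuous, and if $K$ ever met $0$ at a first time $t_0$ then $u(t_0)\neq 0$ (by mass conservation) would be an admissible competitor in the definition of $m_{M(u(0))}$, so $E(u(t_0))\geq m_{M(u(0))}$, contradicting $E(u(t_0))=E(u(0))<m_{M(u(0))}$ by energy conservation.

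The main obstacle is the quantitative upgrade $K(u(t))\leq-\eta$, which is precisely the difficulty highlighted throughout the introduction: the intercritical trick of deforming $u(t)$ along $u_\lambda(x,y)=\lambda^{d/2}u(\lambda x,y)$ until $K$ vanishes is unavailable here, because $K(u_\lambda)=\lambda^2 K(u)$ never crosses zero in the mass-critical regime. To bypass this I would exploit the mass-critical identity $K(v)=2E(v)-\|\nabla_y v\|_2^2$ (obtained immediately from the definitions with $\alpha=\tas=4/d$) together with energy conservation to reduce the task to a uniform lower bound on $\|\nabla_y u(t)\|_2^2$. Arguing by contradiction, if $K(u(t_n))\to 0^-$ along some sequence, the sub-threshold assumption $M(u(0))<2\pi\widehat M(Q)$ combined with a sharp mass-critical Gagliardo--Nirenberg estimate on $\R^d\times\T$ bounds $u(t_n)$ in $H^1$, and a profile decomposition in the spirit of \cite{Luo_Waveguide_MassCritical,Luo_inter}, together with the monotonicity and lower semicontinuity of $c\mapsto m_c$ supplied by Proposition \ref{prop monotone}, should extract a nontrivial weak limit $v_*$ with $M(v_*)\leq M(u(0))$, $K(v_*)\leq 0$ and $E(v_*)<m_{M(v_*)}$; a mass-preserving reduction restoring $K(v_*)=0$ then violates the definition of $m_{M(v_*)}$, closing the argument. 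This variational trapping is the only genuinely new ingredient compared with the author's earlier intercritical blow-up proofs, and its implementation will rely crucially on the Legendre--Fenchel structure developed earlier in the paper.
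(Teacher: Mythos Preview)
Your overall strategy (Glassey virial, flow-invariance of the region $K<0$, concavity of the variance) matches the paper exactly. The divergence is at the quantitative step $K(u(t))\leq -\eta$, where you propose a contradiction argument via profile decomposition and the Legendre--Fenchel machinery. This is heavy and vague at the key compactness step, whereas the paper obtains the bound by a one-line computation.

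The trick you are missing is that while the mass-preserving dilation $u_\lambda=\lambda^{d/2}u(\lambda\,\cdot\,,y)$ is indeed useless here, the \emph{scalar multiplication} $t\mapsto t\phi$ works perfectly: for $\phi$ with $K(\phi)<0$ one has $K(t\phi)=t^2\|\nabla_x\phi\|_2^2-\frac{t^{\alpha+2}\alpha d}{2(\alpha+2)}\|\phi\|_{\alpha+2}^{\alpha+2}$, which is positive for small $t$ and negative at $t=1$, so some $t_*\in(0,1)$ gives $K(t_*\phi)=0$. Since $M(t_*\phi)=t_*^2M(\phi)<M(\phi)$, the monotonicity of $c\mapsto m_c$ from Proposition~\ref{prop monotone} yields
\[
m_{M(\phi)}\leq m_{M(t_*\phi)}\leq E(t_*\phi)-\tfrac12 K(t_*\phi)=\tfrac{t_*^2}{2}\|\nabla_y\phi\|_2^2<\tfrac12\|\nabla_y\phi\|_2^2=E(\phi)-\tfrac12 K(\phi),
\]
using the mass-critical identity $E-\tfrac12 K=\tfrac12\|\nabla_y\cdot\|_2^2$. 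This gives directly $K(\phi)<2\bigl(E(\phi)-m_{M(\phi)}\bigr)$, and since the right-hand side is a negative constant along the flow by conservation of mass and energy, the uniform bound is immediate. No profile decomposition, no Legendre--Fenchel identity, no compactness is needed; only Proposition~\ref{prop monotone} enters. Your detour would at best reprove this inequality through much heavier means.
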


\begin{remark}
We should also notice the readers that for the large data scattering result (Theorem \ref{thm large data}) we need to impose the additional assumption that $d=2$ and $M(u)<\pi \widehat{M}(Q)$. This is firstly due to the fact that the large scale bubbles appearing in the linear profile decomposition will approach the large scale focusing mass-critical resonant system (see \cite{Luo_Waveguide_MassCritical,similar_cubic} for details), in which case we need to restrict ourselves to the dimension $d=2$ that guarantees the nonlinearity to be algebraic. Moreover, the mass constraint will ensure the validity of the large data scattering result for the resonant system (Lemma \ref{cnls lem large scale proxy}), which is essential for applications of the stability result (Lemma \ref{lem stability cnls}).
\end{remark}

\begin{remark}
Similarly, we are also able to formulate a large data scattering result for the cubic NLS on $\R^2\times\T^2$ and the quintic NLS on $\R\times\T$ as well as on $\R\times\T^2$, as long as we have the large data scattering results for the corresponding resonant systems. This, however, becomes very technical since either the order of the nonlinear potential or the dimension of the tori increases. As this is out of the scope of the paper, we leave this interesting problem open for future research.
\end{remark}

The rest of the paper is organized as follows: In Section \ref{sec notation} we collect the notation and definitions which will be used throughout the paper. Section \ref{sec 2} to Section \ref{sec 3} are devoted to the proofs of Proposition \ref{prop iden of gamma and beta} to Theorem \ref{thm legendre} (main part of the Legendre-Fenchel identity). In Section \ref{sec 4} to Section \ref{sec 6} we prove Theorem \ref{thm threshold mass 2} to \ref{thm blow up} (applications of the Legendre-Fenchel identity).

\subsubsection*{Acknowledgements}
The author is grateful to Rowan Killip, Changxing Miao, Monica Visan and Jiqiang Zheng for some stimulating discussions. We also want to thank Louis Jeanjean for pointing out a mistake in the first version of the manuscript.

\subsection{Notation and definitions}\label{sec notation}
We use the notation $A\lesssim B$ whenever there exists some positive constant $C$ such that $A\leq CB$. Similarly we define $A\gtrsim B$ and we use $A\sim B$ when $A\lesssim B\lesssim A$.

For simplicity, we ignore in most cases the dependence of the function spaces on their underlying domains and hide this dependence in their indices. For example $L_x^2=L^2(\R^d)$, $H_{x,y}^1= H^1(\R^d\times \T)$
and so on. However, when the space is involved with time, we still display the underlying temporal interval such as $L_t^pL_x^q(I)$, $L_t^\infty L_{x,y}^2(\R)$ etc. The norm $\|\cdot\|_p$ is defined by $\|\cdot\|_p:=\|\cdot\|_{L_{x,y}^p}$.

For $u\in H_{x,y}^1$ and $v\in H_x^1$, define the functionals
\begin{alignat*}{2}
&M(u):=\|u\|^2_{L_{x,y}^2},\qquad&& \widehat{M}(v):=\|v\|_{L_x^2}^2,\\
&E(u):=\frac{1}{2}\|\nabla_{x,y} u\|^2_{L_{x,y}^2}-\frac{1}{\alpha+2}\|u\|^{\alpha+2}_{L_{x,y}^{\alpha+2}},
\qquad&& \widehat{E}(v):=\frac{1}{2}\|\nabla_{x} v\|^2_{L_x^2}-\frac{1}{\alpha+2}\|v\|^{\alpha+2}_{L_{x}^{\alpha+2}},\\
&S_\omega(u):=E(u)+\frac{\omega}{2} M(u),\qquad&& \widehat{S}_\omega(v):=\widehat{E}(u)+\frac{\omega}{2} \widehat{M}(u),\\
&K(u):=\|\nabla_{x} u\|^2_{L_{x,y}^2}-\frac{\alpha d}{2(\alpha+2)}\|u\|^{\alpha+2}_{L_{x,y}^{\alpha+2}},
\qquad&&
\widehat{K}(v):=\|\nabla_{x} v\|^2_{L_{x}^2}-\frac{\alpha d}{2(\alpha+2)}\|v\|^{\alpha+2}_{L_{x}^{\alpha+2}},\\
&N_\omega(u):=\omega M(u)+\|\nabla_{x,y} u\|^2_{L_{x,y}^2}-\|u\|^{\alpha+2}_{L_{x,y}^{\alpha+2}},
\qquad&&
\widehat{N}_\omega(v):=\omega \widehat M(u)+\|\nabla_{x} v\|^2_{L_{x}^2}-\|v\|^{\alpha+2}_{L_{x}^{\alpha+2}}
\end{alignat*}
and
\begin{align}
I_\omega(u)&:=S_\omega(u)-\frac{1}{2}K(u)=\frac{\omega}{2}M(u)+\frac{1}{2}\|\nabla_y u\|_2^2
+ \frac{\alpha d-4}{4(\alpha+2)}\|u\|_{\alpha+2}^{\alpha+2},\label{def of mI beta}\\
I(u)&:=E(u)-\frac{1}{2}K(u)=\frac{1}{2}\|\nabla_y u\|_2^2
+ \frac{\alpha d-4}{4(\alpha+2)}\|u\|_{\alpha+2}^{\alpha+2}.\label{1.4+}
\end{align}
We also define the sets
\begin{alignat*}{2}
S(c)&:=\{u\in H_{x,y}^1:M(u)=c\},\quad
&&V(c):=\{u\in S(c):K(u)=0\},\\
\widehat S(c)&:=\{u\in H_{x}^1:\widehat M(u)=c\},\quad
&&\widehat V(c):=\{u\in \widehat S(c):\widehat K(u)=0\}
\end{alignat*}
on which we study the following variational problems
\begin{align*}
m_c&:=\inf\{E(u):u\in V(c)\},\\
\gamma_\omega&:=\inf\{S_\omega(u):u\in H_{x,y}^1\setminus\{0\},\,K(u)=0\},\\
\beta_\omega&:=\inf\{S_\omega(u):u\in H_{x,y}^1\setminus\{0\},\,N_\omega(u)=0\},\\
\widehat m_c&:=\inf\{\widehat E(u):u\in \widehat V(c)\},\\
\widehat\gamma_\omega&:=\inf\{\widehat S_\omega(u):u\in H_{x}^1\setminus\{0\},\,\widehat K(u)=0\},\\
\widehat\beta_\omega&:=\inf\{\widehat S_\omega(u):u\in H_{x}^1\setminus\{0\},\,\widehat N_\omega(u)=0\}.
\end{align*}
Finally, we introduce the concept of an \textit{admissible} pair on $\R^d$. A pair $(q,r)$ is said to be $\dot{H}^s$-admissible if $q,r\in[2,\infty]$, $s\in[0,\frac d2)$, $\frac{2}{q}+\frac{d}{r}=\frac{d}{2}-s$ and $(q,d)\neq(2,2)$.

\section{Existence and $y$-dependence of the ground states with fixed frequency: Proof of Theorem \ref{thm existence ground states 1} and \ref{thm threshold frequency}}\label{sec 2}

\subsection{Existence of ground states}
We begin with the proof of Theorem \ref{thm existence ground states 1}. The proof will make use of several auxiliary lemmas, which we state in the following.

As a first auxiliary result, we record the following useful concentration compactness lemma which will help us to find a non-vanishing weak limit of a minimizing sequence.

\begin{lemma}[Non-vanishing weak limit, \cite{TTVproduct2014}]\label{lemma non vanishing limit}
Let $(u_n)_n$ be a bounded sequence in $H_{x,y}^1$. Assume also that there exists some $\alpha\in(0,\tbs)$ such that
\begin{align*}
\liminf_{n\to\infty}\|u_n\|_{\alpha+2}>0.
\end{align*}
Then there exists $(x_n)_n\subset \R^d$ and some $u\in H_{x,y}^1\setminus\{0\}$ such that, up to a subsequence,
\begin{align*}
u_n(\cdot+x_n)\rightharpoonup u\quad\text{weakly in $H_{x,y}^1$}.
\end{align*}
\end{lemma}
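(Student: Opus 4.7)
The plan is to argue by contradiction via a translation-compactness reduction to a Lions-type vanishing lemma, adapted to the partially compact geometry of $\R^d\times\T^m$. Suppose the conclusion fails: then for every sequence $(x_n)\subset\R^d$ the weak $H^1_{x,y}$-limit of every subsequence of $u_n(\cdot+x_n)$ is zero. Using the $H^1$-bound together with the compact Rellich embedding $H^1(B_R(0)\times\T^m)\hookrightarrow L^2(B_R(0)\times\T^m)$ on each fixed bounded cylindrical slab, I would first show that this is equivalent to the uniform $L^2$ vanishing
\[\lim_{n\to\infty}\sup_{x\in\R^d}\|u_n\|_{L^2(B_R(x)\times\T^m)}=0\quad\text{for every fixed }R>0;\]
for if this failed, one could find translated subsequences whose $L^2$-mass on a bounded slab stays bounded below, and Rellich would then produce a nonzero weak $H^1$-limit, contradicting the assumption.

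Once uniform $L^2$-vanishing on unit balls is at hand, I would invoke the cylindrical version of Lions' vanishing lemma: if $(u_n)$ is bounded in $H_{x,y}^1$ and satisfies the above vanishing, then $u_n\to 0$ strongly in $L^{\alpha+2}_{x,y}$ for every $\alpha\in(0,\tbs)$, which directly contradicts the hypothesis $\liminf_n\|u_n\|_{\alpha+2}>0$ and closes the argument. The proof of the cylindrical Lions lemma follows the usual recipe: decompose $\R^d$ into unit cubes $(Q_k)_{k\in\Z^d}$, apply Gagliardo-Nirenberg on each slab $Q_k\times\T^m$ (a bounded Lipschitz domain of effective dimension $d+m$) to obtain
\[\|u_n\|^{\alpha+2}_{L^{\alpha+2}(Q_k\times\T^m)}\leq C\|u_n\|_{L^2(Q_k\times\T^m)}^{\sigma}\|u_n\|_{H^1(Q_k\times\T^m)}^{\alpha+2-\sigma}\]
with $\sigma>0$ chosen so that $\alpha+2-\sigma=2$, which is admissible precisely because $\alpha+2$ lies strictly below the Sobolev exponent in dimension $d+m$. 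Summing in $k$ and pulling out the uniform sup of the $L^2$ cube-norms then yields
\[\|u_n\|^{\alpha+2}_{L^{\alpha+2}_{x,y}}\leq C\Bigl(\sup_{k\in\Z^d}\|u_n\|_{L^2(Q_k\times\T^m)}\Bigr)^{\sigma}\|u_n\|^{2}_{H^1_{x,y}}\longrightarrow 0.\]

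The main technical obstacle I anticipate is the calibration of the interpolation exponent $\sigma$ in the near-critical regime: it must be strictly positive \emph{and} the residual Sobolev exponent on each slab must sit strictly below the critical exponent $\frac{2(d+m)}{d+m-2}$, so that the bounded-domain Sobolev embedding applies with a constant uniform in $k\in\Z^d$. This is exactly where the strict subcriticality $\alpha<\tbs$ is indispensable; at the endpoint $\alpha=\tbs$ concentrating bubbles can survive any translation extraction and the conclusion genuinely fails. The final extraction, via a diagonal argument, of a single sequence of translates $(x_n)$ along which $u_n(\cdot+x_n)$ converges weakly in $H_{x,y}^1$ to a nonzero limit $u$ is then a soft consequence of Banach--Alaoglu combined with Rellich on each bounded slab $B_R(0)\times\T^m$ as $R\to\infty$.
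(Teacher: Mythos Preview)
The paper does not supply its own proof of this lemma; it is quoted verbatim from \cite{TTVproduct2014} and used as a black box. So there is no ``paper's proof'' to compare against, and your proposal is really a reconstruction of the standard Lions-type argument behind the cited result.

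Your strategy is correct and is the standard one: contrapose, pass from ``no nonzero weak limit after any $\R^d$-translation'' to uniform $L^2$-vanishing on unit slabs via Rellich on $B_R\times\T^m$, then run the cube-decomposition Lions lemma to force $\|u_n\|_{\alpha+2}\to 0$. One calibration detail needs adjusting: your claim that the slab Gagliardo--Nirenberg exponent $\sigma$ can always be taken so that $\alpha+2-\sigma=2$ (i.e.\ $\sigma=\alpha$) only holds when $\alpha\le \frac{4}{d+m}$. For $\frac{4}{d+m}<\alpha<\tbs$ the sharp interpolation gives
\[
\|u_n\|_{L^{\alpha+2}(Q_k\times\T^m)}^{\alpha+2}\le C\,\|u_n\|_{L^2(Q_k\times\T^m)}^{(1-\theta)(\alpha+2)}\|u_n\|_{H^1(Q_k\times\T^m)}^{\theta(\alpha+2)},
\qquad \theta(\alpha+2)=\tfrac{(d+m)\alpha}{2}>2,
\]
so the $H^1$ power exceeds $2$ and is not directly summable. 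The fix is immediate: absorb the excess $\|u_n\|_{H^1(Q_k\times\T^m)}^{\theta(\alpha+2)-2}$ into the global bound $\|u_n\|_{H^1_{x,y}}^{\theta(\alpha+2)-2}$ before summing in $k$, leaving the summable factor $\|u_n\|_{H^1(Q_k\times\T^m)}^{2}$. What matters---and what you correctly flag as the key point---is that $(1-\theta)(\alpha+2)>0$, equivalently $\alpha<\tbs$, so that a strictly positive power of the vanishing local $L^2$ norm survives. With this minor repair your argument goes through.
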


We also record a useful scale-invariant Gagliardo-Nirenberg inequality on $\R^d\times\T^m$, which was originally proved in \cite{Luo_inter} in the case $m=1$. As the generalization to any $\alpha,d,m$ satisfying \eqref{add} is straightforward, we omit the details here.

\begin{lemma}[Scale-invariant Gagliardo-Nirenberg inequality on $\R^d\times\T^m$, \cite{Luo_inter}]\label{lemma gn additive}
Assume that $\alpha,d,m$ satisfy \eqref{add}. Then there exists some $C>0$ such that for all $u\in H_{x,y}^1$ we have
\begin{align*}
\|u\|_{\alpha+2}^{\alpha+2}\leq C\|\nabla_x u\|_2^{\frac{\alpha d}{2}}\|u\|_2^{\frac{4-\alpha(d+m-2)}{2}}
(\| u\|_{2}^{\frac{\alpha m}{2}}+\|\nabla_y u\|_{2}^{\frac{\alpha m}{2}}).
\end{align*}
\end{lemma}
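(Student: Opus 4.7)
The plan is to perform a two-step reduction: apply the classical scale-invariant Gagliardo-Nirenberg inequality on $\R^d$ slice-by-slice in $y$, then combine with a Sobolev-Agmon estimate on the compact factor $\T^m$.

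First I would note that since $\alpha<\tbs=4/(d+m-2)$ with $m\geq 0$ forces $\alpha+2$ below the Sobolev critical exponent $2d/(d-2)$ on $\R^d$ (no restriction for $d\leq 2$), the standard scale-invariant Gagliardo-Nirenberg inequality on $\R^d$ applies to the slice $u(\cdot,y)\in H^1(\R^d)$ for every $y\in\T^m$, giving
\[
\|u(\cdot,y)\|_{L^{\alpha+2}_x}^{\alpha+2} \leq C\,\|\nabla_x u(\cdot,y)\|_{L^2_x}^{\alpha d/2}\,\|u(\cdot,y)\|_{L^2_x}^{\alpha+2-\alpha d/2}.
\]
Integrating in $y$ and pulling out $\sup_y\|u(\cdot,y)\|_{L^2_x}$, I would bound the remaining $y$-integral $\int_{\T^m}\|\nabla_x u(\cdot,y)\|_{L^2_x}^{\alpha d/2}\,dy$ by $\|\nabla_x u\|_{L^2_{x,y}}^{\alpha d/2}$; this is immediate in the mass-critical case $\alpha=\tas$ (where $\alpha d/2=2$), while in the intercritical range $\alpha>\tas$ one redistributes via Hölder on the compact torus together with a further interpolation, as detailed in \cite{Luo_inter}.

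For the $\sup_y$ factor, I would observe that the scalar function $f(y):=\|u(\cdot,y)\|_{L^2_x}$ lies in $H^1(\T^m)$ with $|f'(y)|\leq \|\nabla_y u(\cdot,y)\|_{L^2_x}$ by Cauchy-Schwarz in $x$, so the Sobolev-Agmon embedding $H^1(\T^m)\hookrightarrow L^\infty(\T^m)$ (valid for the relevant cases $m\in\{0,1\}$) yields
\[
\sup_{y\in\T^m}\|u(\cdot,y)\|_{L^2_x}^2 \leq C\bigl(\|u\|_{L^2_{x,y}}^2+\|u\|_{L^2_{x,y}}\|\nabla_y u\|_{L^2_{x,y}}\bigr).
\]

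Combining the two bounds produces an estimate of the shape $C\,\|\nabla_x u\|_{L^2_{x,y}}^{\alpha d/2}(A^2+AB)^{\gamma/2}$ with $A=\|u\|_{L^2_{x,y}}$, $B=\|\nabla_y u\|_{L^2_{x,y}}$ and $\gamma=\alpha+2-\alpha d/2$. The final step I would carry out is the elementary pointwise inequality
\[
(A^2+AB)^{\gamma/2}\leq C\,A^\delta(A^\epsilon+B^\epsilon),\qquad \delta=\tfrac{4-\alpha(d+m-2)}{2},\ \epsilon=\tfrac{\alpha m}{2},
\]
verified through the exponent match $\gamma=\delta+\epsilon$ together with a case split $A\geq B$ vs.\ $A\leq B$; the $A\leq B$ regime forces the condition $\epsilon\geq\gamma/2$, which after substitution is exactly $\alpha\geq\tas=4/d$ and is therefore precisely the standing assumption in \eqref{add}. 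The main obstacle will be the intercritical redistribution of the exponent $\alpha d/2>2$ in the $y$-integral without invoking additional $y$-regularity of $u$; this delicate step is the essential content borrowed from \cite{Luo_inter}, and is the reason the present paper regards the generalization as routine once that estimate is in hand.
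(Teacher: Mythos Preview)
Your proposal is correct and in fact supplies more detail than the paper itself: the paper gives no proof at all, merely citing \cite{Luo_inter} for the case $m=1$ and declaring the extension to general $m$ satisfying \eqref{add} (which in practice means $m\in\{0,1\}$) to be straightforward. Your slice-by-slice Gagliardo--Nirenberg argument followed by the Agmon-type bound on $\T$ is precisely the standard route and is complete in the mass-critical case $\alpha=\tas$; your identification of the intercritical redistribution of the exponent $\alpha d/2>2$ as the delicate step deferred to \cite{Luo_inter} is accurate, and since the paper defers the entire lemma to that reference, your treatment is already at least as thorough.
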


The following corollary is an immediate consequence of Lemma \ref{lemma gn additive}.

\begin{corollary}\label{cor lower bound}
For any $\omega\in(0,\infty)$ we have $\gamma_\omega\in(0,\infty)$. Moreover, there exists a minimizing sequence $(u_n)_n$ of $\gamma_\omega$ such that $(u_n)_n$ is bounded in $H_{x,y}^1$ and $\liminf_{n\to\infty}\|u_n\|_{\alpha+2}>0$.
\end{corollary}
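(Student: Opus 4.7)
I would prove the corollary in three stages: (a) finiteness of $\gamma_\omega$ by a scaling test function; (b) positivity of $\gamma_\omega$ via a uniform Gagliardo--Nirenberg lower bound on the constraint set; and (c) construction of the desired minimizing sequence, where the mass-critical case is the real obstacle.

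For (a), pick any $v\in C_c^\infty(\R^d\times\T^m)\setminus\{0\}$ and observe that $\lambda\mapsto K(\lambda v)=\lambda^2\|\nabla_x v\|_2^2-\frac{\alpha d}{2(\alpha+2)}\lambda^{\alpha+2}\|v\|_{\alpha+2}^{\alpha+2}$ is continuous, positive near $0$, and tends to $-\infty$ as $\lambda\to\infty$ (since $\alpha+2>2$), so the intermediate value theorem furnishes $\lambda_0>0$ with $K(\lambda_0 v)=0$, hence $\gamma_\omega\leq S_\omega(\lambda_0 v)<\infty$. For (b), I would use the identity $S_\omega(u)=I_\omega(u)$ on $\{K(u)=0\}$, noting that each summand in \eqref{def of mI beta} is non-negative because $\alpha\geq\tas$. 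Substituting $\|\nabla_x u\|_2^2=\frac{\alpha d}{2(\alpha+2)}\|u\|_{\alpha+2}^{\alpha+2}$ into Lemma \ref{lemma gn additive} and dividing out $\|u\|_{\alpha+2}^{\alpha+2}>0$ yields
\[
1\leq C\,\|u\|_{\alpha+2}^{(\alpha+2)(\alpha d/4-1)}\,\|u\|_2^{(4-\alpha(d+m-2))/2}\bigl(\|u\|_2^{\alpha m/2}+\|\nabla_y u\|_2^{\alpha m/2}\bigr).
\]
Using $\|u\|_2,\|\nabla_y u\|_2\lesssim S_\omega(u)^{1/2}$ from the $I_\omega$-representation (and, in the intercritical case, $\|u\|_{\alpha+2}^{\alpha+2}\lesssim S_\omega(u)$ as well), an exponent count collapses the right-hand side into $C' S_\omega(u)^{\alpha/2}$; in the mass-critical case $\alpha d=4$ the $\|u\|_{\alpha+2}$-factor disappears and the same bound persists, so $\gamma_\omega\gtrsim 1$.

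For (c) in the intercritical case $\alpha\in(\tas,\tbs)$, any minimizing sequence is immediately bounded in $H_{x,y}^1$: since $\alpha d>4$, each summand of $I_\omega$ independently controls $\|u_n\|_2$, $\|\nabla_y u_n\|_2$ and $\|u_n\|_{\alpha+2}$, while $\|\nabla_x u_n\|_2$ is controlled via $K(u_n)=0$; the non-vanishing property then follows from $\gamma_\omega>0$ combined with $S_\omega(u_n)\lesssim \|u_n\|_{\alpha+2}^{\alpha+2}$.

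The main obstacle is the mass-critical case $\alpha=\tas$, where $I_\omega$ degenerates to $\frac{\omega}{2}\|u\|_2^2+\frac{1}{2}\|\nabla_y u\|_2^2$ and no longer controls $\|\nabla_x u\|_2$. My remedy is to exploit the mass-critical scaling $u^t(x,y)=t^{d/2}u(tx,y)$ from \eqref{1.4}: a direct computation using $\alpha d=4$ together with $K(u)=0$ shows that this scaling preserves both the constraint $K=0$ and the value of $S_\omega$. Given any minimizing sequence $(u_n)$---for which $\|\nabla_x u_n\|_2>0$, since $u_n\in L^2\setminus\{0\}$ cannot be $x$-independent---I would rescale with $t_n:=\|\nabla_x u_n\|_2^{-1}$ to produce $u_n^{t_n}$ satisfying $\|\nabla_x u_n^{t_n}\|_2=1$ and $\|u_n^{t_n}\|_{\alpha+2}^{\alpha+2}=\frac{2(\alpha+2)}{\alpha d}$ (again via $K=0$), simultaneously yielding uniform $H_{x,y}^1$-boundedness and the required $\liminf\|u_n^{t_n}\|_{\alpha+2}>0$.
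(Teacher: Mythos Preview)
Your approach matches the paper's almost exactly: the test-function argument for finiteness, the use of $S_\omega=I_\omega$ on $\{K=0\}$ together with Lemma~\ref{lemma gn additive} for positivity, and in particular the mass-critical rescaling $u^t(x,y)=t^{d/2}u(tx,y)$ to force $\|\nabla_x u_n^{t_n}\|_2=1$ are precisely what the paper does. Your positivity argument is in fact slightly cleaner, since you carry out the exponent count uniformly to reach $1\lesssim S_\omega(u)^{\alpha/2}$, whereas the paper treats the mass-critical case separately by a contradiction argument.

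One imprecision: in the intercritical non-vanishing step you assert $S_\omega(u_n)\lesssim\|u_n\|_{\alpha+2}^{\alpha+2}$, but on $\{K=0\}$ one only has $S_\omega(u)=\frac{\omega}{2}\|u\|_2^2+\frac12\|\nabla_y u\|_2^2+\frac{\alpha d-4}{4(\alpha+2)}\|u\|_{\alpha+2}^{\alpha+2}$, and the first two summands are not controlled by the third. The fix is immediate using the inequality you already derived in (b): once $(u_n)$ is bounded in $H_{x,y}^1$, the factors $\|u_n\|_2$ and $\|\nabla_y u_n\|_2$ are uniformly bounded, and your displayed inequality collapses to $1\lesssim\|u_n\|_{\alpha+2}^{(\alpha+2)(\alpha d/4-1)}$, which gives the non-vanishing since $\alpha d/4-1>0$. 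This is exactly how the paper argues in \eqref{key gn inq}--\eqref{key gn inq2}.
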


\begin{proof}
That $\gamma_\omega<\infty$ follows from the fact that there exists some $u\in H_{x,y}^1\setminus\{0\}$ with $K(u)=0$. Indeed, by direct computation one immediately verifies that the mapping $t\mapsto K(tu)$ is positive for $0<t\ll 1$ and $\lim_{t\to\infty} K(tu)=-\infty$, from which the former claim follows. Next, let $(u_n)_n$ be a minimizing sequence of $\gamma_\omega$. Then
\begin{align}\label{uniform bddnes}
\infty>\gamma_\omega+o_n(1)&=\mN_\omega(u_n)=\mN_\omega(u_n)-\frac{2}{\alpha d}\mK(u_n)\nonumber\\
&=\frac{\omega}{2}\mM(u)+\frac{1}{2}\|\nabla_y u_n\|_2^2+\bg(\frac{1}{2}-\frac{2}{\alpha d}\bg)\|\nabla_x u_n\|_2^2.
\end{align}
In the case $\alpha>4/d$ (which in turn implies $\frac{1}{2}-\frac{2}{\alpha d}>0$) we infer that $(u_n)_n$ is a bounded sequence in $H_{x,y}^1$. In the case $\alpha=4/d$ the energies $S_\omega(u)$ and $K(u)$ are stable w.r.t. replacing $u$ to $v^t$ defined by \eqref{1.4} for any $t>0$. Thus we may w.l.o.g. assume that $\|\nabla_x u_n\|_2^2=\frac{1}{2}\|u_n\|_{\tas+2}^{\tas+2}=1$, from which the boundedness of $(u_n)_n$ in $H_{x,y}^1$ and $\liminf_{n\to\infty}\|u_n\|_{\tas+2}>0$ follow. Now by Lemma \ref{lemma gn additive} and the fact that $\alpha<\tbs$ we deduce
\begin{align}\label{key gn inq}
\|\nabla_xu_n\|_2^2&=\frac{\alpha d}{2(\alpha+2)}\|u_n\|_{\alpha+2}^{\alpha+2}\nonumber\\
&\lesssim \|\nabla_x u_n\|_2^{\frac{\alpha d}{2}}\|u_n\|_2^{\frac{4-\alpha(d+m-2)}{2}}
(\| u_n\|_{2}^{\frac{\alpha m}{2}}+\|\nabla_y u_n\|_{2}^{\frac{\alpha m}{2}})\lesssim \|\nabla_x u_n\|_2^{\frac{\alpha d}{2}}.
\end{align}
In the case $\alpha>4/d$, \eqref{key gn inq} implies
\begin{align}
\liminf_{n\to\infty}\|u_n\|_{\alpha+2}^{\alpha+2}\sim\liminf_{n\to\infty}\|\nabla_x u_n\|_2^2>0.\label{key gn inq2}
\end{align}
Hence
\begin{align}
\gamma_\omega&=\lim_{n\to\infty}\bg(\frac{\omega}{2}\mM(u)+\frac{1}{2}\|\nabla_y u_n\|_2^2+\bg(\frac{1}{2}-\frac{2}{\alpha d}\bg)\|\nabla_x u_n\|_2^2\bg)\nonumber\\
&\geq \bg(\frac{1}{2}-\frac{2}{\alpha d}\bg)\liminf_{n\to\infty}\|\nabla_x u_n\|_2^2\gtrsim 1.\label{2.5}
\end{align}
In the case $\alpha=4/d$, if $M(u_n)\neq o_n(1)$, then
\[\gamma_\omega\geq \liminf_{n\to\infty}\frac{\omega}{2}M(u_n)>0.\]
Otherwise assume that $M(u_n)=o_n(1)$. Then from the computations given in \eqref{key gn inq} we obtain
\[1\leq  C o_n(1)(o_n(1)+\|\nabla_y u_n\|_{2}^{\frac{\alpha m}{2}})\]
which in turn implies
\[\|\nabla_y u_n\|_{2}^{\frac{\alpha m}{2}}\geq \frac{1}{Co_n(1)}-o_n(1)\to \infty\]
as $n\to\infty$. We thus conclude that $(\|\nabla_y u_n\|_{2})_n$ is unbounded, which contradicts the $H_{x,y}^1$-boundedness of $(u_n)_n$. This completes the desired proof.
\end{proof}

\begin{lemma}[Pohozaev's identity for \eqref{nls2} on $\R^d\times\T^m$]\label{lemma of poho}
Let $u\in H_{x,y}^1$ be a solution of \eqref{nls2} with some $\omega>0$. Then $K(u)=0$.
\end{lemma}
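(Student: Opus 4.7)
The plan is to exploit the mass-preserving $\R^d$-dilation $u_\lambda(x,y) := \lambda^{d/2}u(\lambda x, y)$ for $\lambda > 0$, which leaves $M(u)$ and $\|\nabla_y u\|_2^2$ invariant while scaling $\|\nabla_x u\|_2^2$ and $\|u\|_{\alpha+2}^{\alpha+2}$ by $\lambda^2$ and $\lambda^{\alpha d/2}$, respectively. A direct change of variables in $x$ gives
\begin{align*}
S_\omega(u_\lambda) = \frac{\lambda^2}{2}\|\nabla_x u\|_2^2 + \frac{1}{2}\|\nabla_y u\|_2^2 + \frac{\omega}{2}M(u) - \frac{\lambda^{\alpha d/2}}{\alpha+2}\|u\|_{\alpha+2}^{\alpha+2},
\end{align*}
so the scalar function $\lambda \mapsto S_\omega(u_\lambda)$ is smooth with
\begin{align*}
\partial_\lambda S_\omega(u_\lambda)\big|_{\lambda=1} \;=\; \|\nabla_x u\|_2^2 - \frac{\alpha d}{2(\alpha+2)}\|u\|_{\alpha+2}^{\alpha+2} \;=\; K(u).
\end{align*}
The task therefore reduces to showing this derivative vanishes.

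To close the argument I would test the equation \eqref{nls2} against the infinitesimal generator of the dilation, $\eta(x,y) := \frac{d}{2}u(x,y) + x\cdot\nabla_x u(x,y)$, which formally equals $\partial_\lambda u_\lambda|_{\lambda=1}$. Concretely one multiplies \eqref{nls2} by $\bar\eta$, integrates over $\R^d \times \T^m$, and takes the real part. Termwise integration by parts in $x$, using only $\nabla_x\cdot x = d$ together with the pointwise identity $\mathrm{Re}(|u|^\alpha u\,\overline{\partial_{x_j}u}) = \frac{1}{\alpha+2}\partial_{x_j}|u|^{\alpha+2}$, shows that the contributions of $\tfrac{d}{2}\bar u$ and of $x\cdot\nabla_x\bar u$ to $\|\nabla_y u\|_2^2$ and to $\omega M(u)$ cancel exactly, while those involving $\|\nabla_x u\|_2^2$ and $\|u\|_{\alpha+2}^{\alpha+2}$ combine into precisely $K(u)$. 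The $\T^m$-integration produces no boundary contribution since $\T^m$ is closed. Because the left-hand side is $0$ by the equation, we obtain $K(u) = 0$.

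The only technical obstacle is that $\eta$ need not belong to $H^1(\R^d\times\T^m)$ due to the unbounded weight $x$, so $\bar\eta$ is not a priori an admissible test function in the weak formulation of \eqref{nls2}. I would bypass this by inserting a smooth cutoff $\phi_R(x) := \phi(x/R)$ with $\phi \in C_c^\infty(\R^d)$ equal to $1$ near the origin, testing \eqref{nls2} against $\phi_R\bar\eta \in H^1(\R^d\times\T^m)$, and then letting $R \to \infty$; admissibility is justified by elliptic regularity applied to \eqref{nls2}, which yields $u \in H^2_{\mathrm{loc}}$ since $\alpha < \tbs$. The commutator terms generated by $\nabla\phi_R$ are supported on the dyadic annulus $\{R \le |x| \le 2R\} \times \T^m$, where $|x|\,|\nabla\phi_R| \lesssim 1$, and their vanishing in the limit follows from dominated convergence using $u \in H^1(\R^d\times\T^m) \cap L^{\alpha+2}(\R^d\times\T^m)$. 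Everything else is routine bookkeeping.
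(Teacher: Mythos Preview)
Your proposal is correct and follows essentially the same Pohozaev strategy as the paper: test \eqref{nls2} against $x\cdot\nabla_x\bar u$, invoke elliptic regularity for local smoothness, and truncate in $x$ to justify the integration by parts. The only cosmetic differences are that the paper tests against $x\cdot\nabla_x\bar u$ and $\bar u$ separately (then eliminates $\|\nabla_y u\|_2^2+\omega\|u\|_2^2$ between the two identities) and kills boundary terms via integration over balls $B_n\times\T^m$ with a Berestycki--Lions sequence-of-radii argument, whereas you test directly against the dilation generator $\tfrac{d}{2}\bar u + x\cdot\nabla_x\bar u$ (so the cancellations are built in) and use a smooth cutoff with dominated convergence; both packagings are standard and equivalent.
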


\begin{proof}
Using standard elliptic regularity theory (see for instance \cite[Lem. B.3]{Struwe1996}) and bootstrap arguments (see for instance \cite[Thm. 8.1.1]{Cazenave2003}) we know that $u\in W_{\rm loc}^{3,q}(\R^{d+m})$ for all $q\in[1,\infty)$. Direct computation yields
\begin{align}
\mathrm{Re}(-\Delta_x u(x\cdot\nabla_x \bar{u}))&=\nabla_x\cdot
(-\mathrm{Re}(\nabla_x u(x\cdot\nabla_x \bar{u}))+x\frac{|\nabla_x u|^2}{2})-\frac{d-2}{2}|\nabla_x u|^2,\label{pohozaev1}\\
\mathrm{Re}((\omega u-|u|^\alpha u)x\cdot\nabla_x\bar{u})&=\mathrm{div}_x(x(\frac{\omega}{2}|u|^2-\frac{1}{\alpha+2}|u|^{\alpha+2}))
-d(\frac{\omega}{2}|u|^2-\frac{1}{\alpha+2}|u|^{\alpha+2}),\label{pohozaev2}\\
\mathrm{Re}(-\Delta_y u (x\cdot\nabla_x \bar{u}))&=\mathrm{Re}\left(-\mathrm{div}_y(\nabla_y u (x\cdot\nabla_x u))\right)
+\nabla_x\cdot\left(x\frac{|\nabla_y u|^2}{2}\right)-\frac{d|\nabla_y u|^2}{2}.\label{pohozaev3}
\end{align}
Define
\begin{align*}
R_n:=B_n^x\times\T^m,
\end{align*}
where $B_n^x=\{x\in\R^d:|x|<n\}$. Integrate \eqref{pohozaev1} to \eqref{pohozaev3} over $R_n$ and followed by divergence theorem and the periodic boundary conditions, we obtain
\begin{align}
0=&-\bg(\int_{\T^m}\int_{\pt B_n^x}\mathrm{Re}((\nabla_x u \cdot n_x)(\sigma_x\cdot\nabla_x \bar{u}))-(\sigma_x\cdot n_x)\frac{|\nabla_x u|^2}{2}
\,d\sigma_xdy\bg)\label{pohoa}\\
&+\int_{\T^m}\int_{\pt B_n^x}(\sigma_x\cdot n_x)\left(\frac{\omega}{2}|u|^2-\frac{1}{\alpha+2}|u|^{\alpha+2}\right)
\,d\sigma_xdy\label{pohob}\\
&+\int_{\T^m}\int_{\pt B_n^x}(\sigma_x\cdot n_x)\frac{|\nabla_y u|^2}{2}
\,d\sigma_xdy\label{pohoc}\\
&-\frac{d-2}{2}\|\nabla_x u\|_{L^2(R_n)}^2-\frac{d}{2}\|\nabla_y u\|_{L^2(R_n)}^2-\frac{d\omega}{2}\|u\|_{L^2(R_n)}^2+\frac{d}{\alpha+2}\|u\|_{L^{\alpha+2}(R_n)}^{\alpha+2}.
\end{align}
Particularly, the terms \eqref{pohoa} to \eqref{pohoc} are well-defined, thanks to the smoothness of $u$. Arguing as in the proof of \cite[Prop. 1]{lions1}, there exists a sequence $(n)_n\subset(0,\infty)$ with $n\to\infty$ such that
$$ \eqref{pohoa}+\eqref{pohob}+\eqref{pohoc}\to 0$$
as $n\to\infty$. Combining with the dominated convergence theorem, we infer that
\begin{align}\label{1poho}
0=\frac{d-2}{2}\|\nabla_x u\|_{2}^2+\frac{d}{2}\|\nabla_y u\|_{2}^2+\frac{d\omega}{2}\|u\|_{2}^2-\frac{d}{\alpha+2}\|u\|_{\alpha+2}^{\alpha+2}.
\end{align}
On the other hand, multiplying \eqref{nls2} with $\bar{u}$ and integrating over $\R^d\times\T^m$, we obtain
\begin{align}\label{2poho}
0=\|\nabla_x u\|_{2}^2+\|\nabla_y u\|_{2}^2+\omega\|u\|_{2}^2-\|u\|_{\alpha+2}^{\alpha+2}.
\end{align}
Eliminating $\|\nabla_y u\|_2^2+\omega\|u\|_2^2$ in \eqref{1poho} and \eqref{2poho} we conclude that $K(u)=0$.
\end{proof}

\begin{lemma}[Optimizer as a standing wave solution]\label{vc implies sc}
For $\omega\in(0,\infty)$ let $u$ be an optimizer of $\gamma_\omega$. Then there exists some $t>0$ such that $v(x):=t^{\frac{d}{2}}u(tx,y)$ solves \eqref{nls2} with the same given $\omega$.
\end{lemma}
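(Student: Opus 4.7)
The natural approach is to invoke the Lagrange multiplier theorem on the manifold $\{u\in H_{x,y}^1\setminus\{0\}:K(u)=0\}$ and then absorb the resulting multiplier through an $x$-dilation $v^t$ of the form \eqref{1.4}. First I would verify the non-degeneracy $K'(u)\ne 0$. Pairing $K'(u)=-2\Delta_x u-\frac{\alpha d}{2}|u|^\alpha u$ against $u$ and substituting $K(u)=0$ gives
\[
\langle K'(u),u\rangle = 2\|\nabla_x u\|_2^2-\frac{\alpha d}{2}\|u\|_{\alpha+2}^{\alpha+2}=-\frac{\alpha^2 d}{2(\alpha+2)}\|u\|_{\alpha+2}^{\alpha+2},
\]
which is nonzero since $u\ne 0$ (vanishing of $\|u\|_{\alpha+2}$ combined with $K(u)=0$ would force $\nabla_x u\equiv 0$, hence $u\equiv 0$ in $L^2(\R^d\times\T^m)$). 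Consequently there exists $\lambda\in\R$ with $S_\omega'(u)=\lambda K'(u)$, i.e.
\[
-\Delta_y u-(1-2\lambda)\Delta_x u+\omega u=\bigl(1-\tfrac{\alpha d}{2}\lambda\bigr)|u|^\alpha u.
\]

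A direct scaling computation then reveals that $v^t(x,y)=t^{d/2}u(tx,y)$ solves \eqref{nls2} if and only if
\[
t^2=1-2\lambda\qquad\text{and}\qquad t^{\alpha d/2}=1-\tfrac{\alpha d}{2}\lambda.
\]
In the mass-supercritical case $\alpha>4/d$, I would test the Euler-Lagrange identity against the scaling generator $u_1:=\frac{d}{dt}\bigl|_{t=1}v^t=\frac{d}{2}u+x\cdot\nabla_x u$. Using $\frac{d}{dt}\bigl|_{t=1}S_\omega(v^t)=K(u)=0$ on one side and
\[
\frac{d}{dt}\Bigl|_{t=1}K(v^t)=\frac{\alpha d(4-\alpha d)}{4(\alpha+2)}\|u\|_{\alpha+2}^{\alpha+2}\ne 0
\]
(the latter obtained after substituting $K(u)=0$) on the other, the identity $0=\lambda\,\langle K'(u),u_1\rangle$ forces $\lambda=0$. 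Then $t=1$ solves the scaling system, so $u$ itself satisfies \eqref{nls2}.

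The main obstacle is the mass-critical case $\alpha=4/d$: there $v^t$ is a symmetry of the constraint (one checks $K(v^t)=t^2K(u)$), so the scaling test above is vacuous and cannot pin $\lambda$ down. On the other hand, the two scaling identities now collapse into one, so any $t>0$ solves them with $\lambda=(1-t^2)/2$; the remaining task is to make $t=\sqrt{1-2\lambda}$ admissible, i.e.\ to show $\lambda<1/2$. I would close this gap by pairing the Lagrange equation with $u$ and invoking $K(u)=0$ (which in the mass-critical case reads $\|\nabla_x u\|_2^2=\frac{2}{\alpha+2}\|u\|_{\alpha+2}^{\alpha+2}$) to arrive at
\[
\|\nabla_y u\|_2^2+\omega\|u\|_2^2=(1-2\lambda)\cdot\frac{\alpha}{\alpha+2}\|u\|_{\alpha+2}^{\alpha+2}.
\]
Since $\omega>0$ and $u\ne 0$, the left-hand side is strictly positive, and using $\|u\|_{\alpha+2}>0$ one concludes $1-2\lambda>0$, which completes the plan.
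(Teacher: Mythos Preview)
Your proposal is correct and follows essentially the same route as the paper: both derive the Euler--Lagrange equation $S_\omega'(u)=\lambda K'(u)$, use a Pohozaev-type identity to force $\lambda=0$ in the intercritical case (your pairing with the scaling generator $u_1=\tfrac{d}{2}u+x\cdot\nabla_x u$ is precisely the formal version of the identity \eqref{2.17} in the paper), and test against $u$ to obtain $\lambda<\tfrac12$ in the mass-critical case, after which $t=\sqrt{1-2\lambda}$ works.

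One point of ordering deserves attention. The paper first establishes $\mu<\tfrac12$ for \emph{all} $\alpha$ (by the testing-with-$u$ argument), because this is exactly what makes the Euler--Lagrange operator $(1-2\mu)(-\Delta_x)+(-\Delta_y)$ uniformly elliptic, which in turn yields the regularity $u\in W^{3,q}_{\mathrm{loc}}$ needed to justify the Pohozaev identity rigorously. In your intercritical argument you jump directly to pairing with $u_1$, but $x\cdot\nabla_x u$ need not lie in $L^2_{x,y}$ a priori, so the chain-rule identity $\langle S_\omega'(u),u_1\rangle=\tfrac{d}{dt}\big|_{t=1}S_\omega(v^t)$ requires the same cutoff-and-regularity justification --- and for that you need $1-2\lambda>0$. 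The fix is immediate: run your mass-critical computation first in the intercritical case as well (it gives $\lambda<\tfrac{2\alpha+4-\alpha d}{\alpha^2 d}<\tfrac12$ once $\alpha d>4$), and then proceed with the Pohozaev step.
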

\begin{proof}
By Lagrange multiplier theorem there exists $\mu \in\R$ such that
\begin{align*}
S_\omega'(u)-\mu K'(u)=0,
\end{align*}
or equivalently
\begin{align}\label{pde on Vc}
(1-2\mu)(-\Delta_x u)+(-\Delta_y u)+\omega u+(\frac{\mu\alpha d}{2}-1)|u|^{\alpha}u=0.
\end{align}
If $\mu\geq\frac12$, then using $K(u)=0$ we obtain
\begin{align*}
(1-2\mu)\|\nabla_x u\|_2^2+(\frac{\mu\alpha d}{2}-1)\|u\|_{\alpha+2}^{\alpha+2}
&=\frac{\alpha d-2(\alpha+2)+\mu\alpha^2 d}{2(\alpha+2)}\|u\|_{\alpha+2}^{\alpha+2}\nonumber\\
&\geq(\alpha d-2(\alpha+2)+\frac{1}{2}\alpha^2 d)(2(\alpha+2))^{-1}\|u\|_{\alpha+2}^{\alpha+2}.
\end{align*}
Calculating the derivative of $f(\alpha):=\alpha d-2(\alpha+2)+\frac{1}{2}\alpha^2 d$ and using $\alpha\geq 4/d$ we obtain
\begin{align*}
f'(\alpha)=d-2+\alpha d\geq d+2>0.
\end{align*}
Therefore
$$\alpha d-2(\alpha+2)+\frac{1}{2}\alpha^2 d\geq f(4/d)=0.$$
Consequently, testing \eqref{pde on Vc} with $\bar u$ yields
\begin{align}\label{lower bound}
0\geq\|\nabla_y u\|_2^2+\omega\|u\|_2^2,
\end{align}
which implies the contradiction $u=0$, and thus $\mu< \frac{1}{2}$. In this case, by standard elliptic regularity and bootstrap arguments we know that $u\in W^{3,q}_{\rm loc}(\R^{d+1})$ for all $q\in[2,\infty)$, hence using the arguments in Lemma \ref{lemma of poho} we obtain the following Pohozaev's identity corresponding to \eqref{pde on Vc}:
\begin{align}
0=(1-2\mu)\|\nabla_x u\|_2^2+\bg(\frac{\mu\alpha d}{2}-1\bg)\bg(\frac{\alpha d}{2(\alpha+2)}\bg)\|u\|_{\alpha+2}^{\alpha+2}.\label{2.17}
\end{align}
In the case $\alpha>4/d$, together with $K(u)=0$ it follows by eliminating $\|\nabla_x u\|_2^2$ that
\begin{align*}
\mu\|u\|_{\alpha+2}^{\alpha+2}=0,
\end{align*}
which in turn implies $\mu=0$ since $u\neq 0$. The claim thus follows by choosing $t=1$ and $v=u$. In the case $\alpha=4/d$ we obtain $\mu\alpha d/2-1=2\mu-1$. Thus we may simply choose $t=\sqrt{1-2\mu}$ and the claim follows from \eqref{pde on Vc}. This completes the proof.
\end{proof}

\begin{lemma}[Equivalent formulation for $\gamma_\omega$]\label{identification m gamma}
For any $\omega\in(0,\infty)$ define
\begin{align}
\bar \gamma_\omega:=\inf\{\mI_\omega(u):u\in H_{x,y}^1\setminus\{0\},\mK(u)\leq 0\},
\end{align}
where $\mI_\omega(u)$ is defined by \eqref{def of mI beta}. Then $\gamma_\omega=\bar \gamma_\omega$.
\end{lemma}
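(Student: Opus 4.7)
My plan is to prove the two inequalities separately, with the reverse direction handled by a pure multiplicative rescaling $u \mapsto \lambda u$.

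The inequality $\bar\gamma_\omega \leq \gamma_\omega$ is immediate: any $u \in H^1_{x,y}\setminus\{0\}$ with $K(u)=0$ is admissible for $\bar\gamma_\omega$, and on this set one has $I_\omega(u) = S_\omega(u) - \tfrac12 K(u) = S_\omega(u)$, so the infimum defining $\bar\gamma_\omega$ is taken over a larger class with the same value of the functional. For the reverse inequality, I take an arbitrary admissible $u$ for $\bar\gamma_\omega$ (so $u\neq 0$ and $K(u)\leq 0$) and construct a rescaled competitor for $\gamma_\omega$.

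The key step is to find $\lambda_0 \in (0,1]$ with $K(\lambda_0 u)=0$. Direct computation gives
\[
K(\lambda u) = \lambda^2\|\nabla_x u\|_2^2 - \frac{\alpha d}{2(\alpha+2)}\lambda^{\alpha+2}\|u\|_{\alpha+2}^{\alpha+2}.
\]
If $u\in H^1_{x,y}\setminus\{0\}$ then $\|\nabla_x u\|_2^2>0$, since otherwise $u$ would be $x$-independent on $\R^d\times\T^m$, which combined with $L^2$-integrability forces $u\equiv 0$; likewise $K(u)\leq 0$ together with $u\neq 0$ gives $\|u\|_{\alpha+2}^{\alpha+2}>0$. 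Thus $\lambda\mapsto K(\lambda u)$ is strictly positive for $\lambda$ close to $0$, non-positive at $\lambda=1$, and continuous, so by IVT there exists $\lambda_0\in(0,1]$ with $K(\lambda_0 u)=0$. Setting $v:=\lambda_0 u$, admissibility for $\gamma_\omega$ gives $\gamma_\omega \leq S_\omega(v) = I_\omega(v)$. It then remains to show $I_\omega(v)\leq I_\omega(u)$. Using the explicit form \eqref{def of mI beta},
\[
I_\omega(\lambda_0 u) = \lambda_0^2\Bigl(\tfrac{\omega}{2}M(u) + \tfrac12\|\nabla_y u\|_2^2\Bigr) + \frac{\alpha d-4}{4(\alpha+2)}\lambda_0^{\alpha+2}\|u\|_{\alpha+2}^{\alpha+2}.
\]
Since $\lambda_0\in(0,1]$ one has $\lambda_0^2\leq 1$ and $\lambda_0^{\alpha+2}\leq 1$, and since $\alpha\geq \tas = 4/d$ the coefficient $\tfrac{\alpha d-4}{4(\alpha+2)}$ is non-negative. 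Every term is therefore bounded by its counterpart in $I_\omega(u)$, yielding $I_\omega(\lambda_0 u)\leq I_\omega(u)$. Taking the infimum over $u$ gives $\gamma_\omega \leq \bar\gamma_\omega$.

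The main subtlety I expect is the mass-critical case $\alpha=\tas$: here the exponents satisfy $\alpha d - 4 = 0$, so all the usual $L^2$-preserving rescalings $u\mapsto t^{d/2}u(tx,y)$ act on $K$ by a pure $t^2$ factor and are therefore unusable for adjusting the sign of $K$. The reason the multiplicative rescaling $u\mapsto \lambda u$ still works is precisely that the two homogeneity exponents $\lambda^2$ and $\lambda^{\alpha+2}$ remain distinct for every $\alpha>0$, while simultaneously the non-negativity of $\tfrac{\alpha d-4}{4(\alpha+2)}$ for $\alpha\geq \tas$ prevents the $L^{\alpha+2}$-term in $I_\omega$ from spoiling the desired monotonicity under $\lambda_0\leq 1$. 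This is really the only place where the lower bound $\alpha\geq \tas$ enters the argument.
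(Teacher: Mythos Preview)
Your proof is correct and follows essentially the same route as the paper: both arguments use the multiplicative rescaling $u\mapsto\lambda u$ to land on $K(\lambda_0 u)=0$ for some $\lambda_0\in(0,1]$, and then compare $I_\omega(\lambda_0 u)\leq I_\omega(u)$ term by term using $\alpha d-4\geq 0$. The only cosmetic difference is that the paper works with a minimizing sequence $(u_n)_n$ for $\bar\gamma_\omega$ whereas you take an arbitrary admissible $u$; your extra justification that $\|\nabla_x u\|_2>0$ (needed for the IVT step) is in fact more careful than the paper, which leaves this implicit.
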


\begin{proof}
Let $(u_n)_n\subset H_{x,y}^1\setminus\{0\}$ be a minimizing sequence for the variational problem of $\bar\gamma_\omega$, i.e.
\begin{align*}
\mI_\omega(u_n)=\bar\gamma_\omega+o_n(1),\quad
\mK(u_n)\leq 0\quad\forall\,n\in\N.
\end{align*}
By fundamental computation we know that there exists some $t_n\in(0,1]$ such that $\mK(t_n u_n)$ is equal to zero. Thus
\begin{align*}
\gamma_\omega&\leq S_\omega(t_n u_n)=\mI_\omega(t_n u_n)+\frac12 K_\omega(t_n u_n)=\mI_\omega(t_n u_n)\nonumber\\
&=t_n^2\bg(\frac{\omega}{2}M(u_n)+\frac{1}{2}\|\nabla_y u_n\|_2^2\bg)
+ \frac{t_n^{\alpha+2}(\alpha d-4)}{4(\alpha+2)}\|u_n\|_{\alpha+2}^{\alpha+2}\nonumber\\
&\leq
\frac{\omega}{2}M(u_n)+\frac{1}{2}\|\nabla_y u_n\|_2^2
+ \frac{\alpha d-4}{4(\alpha+2)}\|u_n\|_{\alpha+2}^{\alpha+2}
=\bar \gamma_\omega+o_n(1).
\end{align*}
Sending $n\to\infty$ we infer that $\gamma_\omega\leq \bar\gamma_\omega$. On the other hand, by definition
\begin{align*}
\bar\gamma_\omega
&\leq\inf\{\mI_\omega(u):u\in H_{x,y}^1\setminus\{0\},\,K(u)=0\}\nonumber\\
&=\inf\{\mN_\omega(u):u\in H_{x,y}^1\setminus\{0\},\,K(u)=0\}=\gamma_{\omega}.
\end{align*}
This completes the desired proof.
\end{proof}

Having all the preliminaries we are in a position to prove Theorem \ref{thm existence ground states 1}.

\begin{proof}[Proof of Theorem \ref{thm existence ground states 1}]
Let $(u_n)_n\subset H_{x,y}^1\setminus\{0\}$ be a minimizing sequence of $\bar\gamma_\omega$, i.e.
\begin{align*}
\mI_\omega(u_n)=\bar\gamma_\omega+o_n(1),\quad \mK(u_n)\leq 0.
\end{align*}
By diamagnetic inequality we know that $\bar\gamma_\omega$ is stable under the mapping $u\mapsto |u|$, thus we may w.l.o.g. assume that $u_n\geq0$. Arguing as in Corollary \ref{cor lower bound} we infer that $(u_n)_n$ is a bounded sequence in $H_{x,y}^1$ with
\begin{align*}
\liminf_{n\to\infty}\|u_n\|_{\alpha+2}\gtrsim 1.
\end{align*}
By Lemma \ref{lemma non vanishing limit} we may find a sequence $(x_n)_n\subset\R^d$ (which by spatial translation invariance can be w.l.o.g. assumed equal zero) and some $0\leq u_\omega\in S$ such that (up to a subsequence)
\begin{align*}
u_n \rightharpoonup u_\omega\quad\text{in $H_{x,y}^1$}.
\end{align*}
We now show that $u_\omega$ is an optimizer of $\gamma_\omega$. By weakly lower semicontinuity of norms and Lemma \ref{identification m gamma} we already know that $\mI_\omega(u_\omega)\leq \bar\gamma_\omega=\gamma_\omega$. It remains to prove that $\mK(u_\omega)= 0$. Assume first $\mK(u_\omega)>0$. Using the Brezis-Lieb lemma we have
\begin{align*}
\mK(u_n-u_\omega)+\mK(u_\omega)=\mK(u_n)+o_n(1).
\end{align*}
Combining with the fact that $\mK(u_n)\leq 0$ we know that $\mK(u_n-u_\omega)<0$ for all sufficiently large $n$. Thus we can find some $t_n\in(0,1)$ such that $\mK(t_n(u_n-u_\omega))=0$. Applying the Brezis-Lieb lemma again we infer that
\begin{align*}
\bar\gamma_\omega\leq \mI_\omega(t_n(u_n-u_\omega))\leq \mI_\omega(u_n-u_\omega)=\mI_\omega(u_n)-\mI_\omega(u_\omega)+o_n(1)=\bar\gamma_\omega-\mI_\omega(u_\omega)+o_n(1).
\end{align*}
Combining with the non-negativity of $\mI_\omega(u_\omega)$ we infer that $\mI_\omega(u_\omega)=0$, which in turn implies $u_\omega=0$, a contradiction. Next, if $\mK(u_\omega)<0$, then we can find some $t\in(0,1)$ such that $\mK(tu_\omega)=0$ and consequently
\begin{align*}
\bar \gamma_\omega\leq \mI_\omega(t u_\omega)<\mI_\omega(u_\omega)\leq \bar\gamma_\omega
\end{align*}
which is again a contradiction. Hence $\mK(u_\omega)=0$ and $u_\omega$ is an optimizer of $\gamma_{\omega}$. That $u_\omega$ is a solution (up to symmetries) of \eqref{nls2} follows from Lemma \ref{vc implies sc} and the positivity of $u_\omega$ follows from the strong maximum principle. Finally, in the case $p=\tas$, that the function $v^t$ defined by \eqref{1.4} is also an optimizer of $\gamma_\omega$ follows from the fact that the energies $S_\omega(u)$ and $K(u)$ are stable under the transformation $u\mapsto v^t$ for any $t>0$. This completes the proof.
\end{proof}

\subsection{$y$-dependence of the ground states}
This subsection is devoted to the proof of Theorem \ref{thm threshold frequency}. Many of the arguments are similar to the ones given in \cite{Luo_inter}, but for the sake of completeness we will present the full details of the proof.

For $\omega>0$ define
\begin{gather*}
E_{1,\omega}(u):=\frac{1}{2}\|\nabla_{x} u\|^2_{2}+\frac{1}{2\sqrt{\omega}}\|\nabla_y u\|_2^2-\frac{1}{\alpha+2}\|u\|^{\alpha+2}_{L_{x,y}^{\alpha+2}},\\
S_{1,\omega}(u):=E_{1,\omega}(u)+\frac{1}{2} M(u),\\
\gamma_{1,\omega}:=\inf\{S_{1,\omega}(u):u\in H_{x,y}^1\setminus\{0\},\,K(u)=0\}.
\end{gather*}
Particularly, arguing as in the proof of Theorem \ref{thm existence ground states 1} we know that for any $\omega>0$ the variational problem $\gamma_{1,\omega}$ has at least a positive optimizer $u_{1,\omega}$.

Our first goal is to prove the following characterization of $\gamma_{1,\omega}$ for varying $\omega$.

\begin{lemma}\label{lemma auxiliary}
There exists some $\omega_*\in(0,\infty)$ such that
\begin{itemize}
\item For all $\omega\in(0,\omega_*)$ we have $\gamma_{1,\omega}=(2\pi)^m \widehat \gamma_{1}$. Moreover, any minimizer $u_{1,\omega}$ of $\gamma_{1,\omega}$ satisfies $\nabla_y u_{1,\omega}= 0$.
\item For all $\omega\in(\omega_*,\infty)$ we have $\gamma_{1,\omega}<(2\pi)^m \widehat \gamma_{1}$. Moreover, any minimizer $u_{1,\omega}$ of $\gamma_{1,\omega}$ satisfies $\nabla_y u_{1,\omega}\neq 0$.
\end{itemize}
\end{lemma}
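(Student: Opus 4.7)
Set $\omega_* := \sup\{\omega>0:\gamma_{1,\omega}=(2\pi)^m\widehat\gamma_1\}$, with the convention $\sup\varnothing=0$. Two elementary observations first: (a) the map $\omega\mapsto\gamma_{1,\omega}$ is monotone non-increasing, since the only $\omega$-dependent term in $S_{1,\omega}(u)$ is $\frac{1}{2\sqrt\omega}\|\nabla_y u\|_2^2$, which decreases in $\omega$; (b) any $v\in H_x^1\setminus\{0\}$ with $\widehat K(v)=0$ lifts to a $y$-independent test function $u(x,y):=v(x)$ with $K(u)=(2\pi)^m\widehat K(v)=0$ and $S_{1,\omega}(u)=(2\pi)^m\widehat S_1(v)$, yielding the uniform upper bound $\gamma_{1,\omega}\leq(2\pi)^m\widehat\gamma_1$ for every $\omega>0$.

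For $\omega_*<\infty$ I would construct a perturbative test function whose energy is strictly below $(2\pi)^m\widehat\gamma_1$ for large $\omega$. Pick a positive minimizer $v$ of $\widehat\gamma_1$ (which exists by the $\R^d$-analogue of Theorem \ref{thm existence ground states 1}) and a smooth mean-zero $\psi\colon\T^m\to\R$ with $\nabla_y\psi\not\equiv 0$. For small $\epsilon>0$ set $u_\epsilon(x,y):=t_\epsilon\,v(x)(1+\epsilon\psi(y))$ with $t_\epsilon\in(0,1)$ determined by $K(u_\epsilon)=0$. A Taylor expansion exploiting $\widehat K(v)=0$ yields $t_\epsilon^2=1-C_1\epsilon^2+O(\epsilon^3)$ and
\[
S_{1,\omega}(u_\epsilon)-(2\pi)^m\widehat\gamma_1=\epsilon^2\,\|v\|_{L_x^2}^2\,\Bigl(\tfrac{1}{2\sqrt\omega}\|\nabla_y\psi\|_{L_y^2}^2-C_2\|\psi\|_{L_y^2}^2\Bigr)+O(\epsilon^3),
\]
with explicit $\omega$-independent constants $C_1,C_2>0$. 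Choosing $\omega$ large enough to make the parenthesis negative gives $\gamma_{1,\omega}<(2\pi)^m\widehat\gamma_1$, so $\omega_*<\infty$.

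The main technical obstacle is $\omega_*>0$. I would argue by contradiction: if $\omega_n\to 0^+$ with $\gamma_{1,\omega_n}<(2\pi)^m\widehat\gamma_1$, let $u_n$ be minimizers of $\gamma_{1,\omega_n}$ (existence by adapting the proof of Theorem \ref{thm existence ground states 1}). Writing $\gamma_{1,\omega_n}=S_{1,\omega_n}(u_n)-\frac{1}{2}K(u_n)$ (using $K(u_n)=0$) and discarding the non-negative terms other than $\frac{1}{2\sqrt{\omega_n}}\|\nabla_y u_n\|_2^2$ gives $\|\nabla_y u_n\|_2^2\leq 2\sqrt{\omega_n}(2\pi)^m\widehat\gamma_1\to 0$, forcing asymptotic $y$-independence. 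The Gagliardo-Nirenberg estimate of Lemma \ref{lemma gn additive} together with $K(u_n)=0$ (supplemented, in the mass-critical case, by the $x$-rescaling from Theorem \ref{thm existence ground states 1}(ii) to normalize $\|\nabla_x u_n\|_2=1$) makes $(u_n)_n$ bounded in $H_{x,y}^1$, and Lemma \ref{lemma non vanishing limit} extracts, up to a translation in $x$, a non-trivial weak limit $u_\infty=v_\infty(x)$ which is $y$-independent. A Brezis-Lieb decomposition in $L^2$, $\|\nabla_x\cdot\|_2^2$ and $\|\cdot\|_{\alpha+2}^{\alpha+2}$, combined with the $\R^d$-analogue of Lemma \ref{identification m gamma} after a corrective scaling $s_n\to 1$, forces $\widehat S_1(v_\infty)\geq\widehat\gamma_1-o(1)$, while the residual $u_n-v_\infty$ contributes a non-negative term containing $\tfrac{1}{2\sqrt{\omega_n}}\|\nabla_y(u_n-v_\infty)\|_2^2$. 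Combining these estimates yields $\gamma_{1,\omega_n}\geq(2\pi)^m\widehat\gamma_1-o(1)$, contradicting the standing strict inequality. I expect the delicate part to be the quantitative control of the $o(1)$-terms: one must show, via the Poincar\'e inequality on $\T^m$ together with the Sobolev embedding $H^1(\T^m)\hookrightarrow L^{\alpha+2}(\T^m)$, that the nonlinear gain coming from the $y$-dependent residual cannot cancel the $y$-gradient penalty for all sufficiently small $\omega_n$.

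The dichotomy and $y$-derivative assertions then follow as consequences. For $\omega\in(0,\omega_*)$, monotonicity in (a) together with the definition of $\omega_*$ forces $\gamma_{1,\omega}=(2\pi)^m\widehat\gamma_1$; for $\omega\in(\omega_*,\infty)$ strict inequality is immediate from the supremum definition. If $\omega\in(0,\omega_*)$ admitted a minimizer $u_{1,\omega}$ with $\nabla_y u_{1,\omega}\neq 0$, then for any $\omega'\in(\omega,\omega_*)$ one would have $S_{1,\omega'}(u_{1,\omega})<S_{1,\omega}(u_{1,\omega})=(2\pi)^m\widehat\gamma_1$, contradicting $\gamma_{1,\omega'}=(2\pi)^m\widehat\gamma_1$. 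Conversely, if $\omega\in(\omega_*,\infty)$ admitted a minimizer $u_{1,\omega}$ with $\nabla_y u_{1,\omega}=0$, then $u_{1,\omega}=v(x)$ with $\widehat K(v)=0$, giving $S_{1,\omega}(u_{1,\omega})=(2\pi)^m\widehat S_1(v)\geq(2\pi)^m\widehat\gamma_1>\gamma_{1,\omega}$, again a contradiction.
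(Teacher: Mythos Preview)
Your perturbative argument for $\omega_*<\infty$ is correct in spirit and genuinely different from the paper's approach. The paper constructs an explicit test function $\psi(x,y)=\rho(y_1)\cdots\rho(y_m)P(x)$, where $P$ optimizes $\widehat\gamma_1$ and $\rho$ is a piecewise-linear function on $\T$ engineered so that $\|\rho\|_{L^2(\T)}^2=\|\rho\|_{L^{\alpha+2}(\T)}^{\alpha+2}<2\pi$; this makes $K(\psi)=0$ exactly and $S_{1,\infty}(\psi)<(2\pi)^m\widehat\gamma_1$ by direct computation. Your second-variation expansion is conceptually cleaner (it identifies the instability of the $y$-independent ground state under $y$-modulations once the gradient penalty is suppressed) but the coefficient you call $C_2\|v\|_{L_x^2}^2$ is not quite of that form---carrying out the expansion via $S_{1,\omega}(u_\epsilon)=I_{1,\omega}(u_\epsilon)$ one finds the negative $\epsilon^2$-coefficient is $(\alpha+2)\,\widehat\gamma_1\,\|\psi\|_{L_y^2}^2$. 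The sign is correct and the conclusion stands.

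The genuine gap is in your argument for $\omega_*>0$. What your sketch actually delivers is $\liminf_{n\to\infty}\gamma_{1,\omega_n}\geq(2\pi)^m\widehat\gamma_1$, which combined with the upper bound in (b) gives only $\gamma_{1,\omega_n}\to(2\pi)^m\widehat\gamma_1$. This is \emph{not} in contradiction with the standing hypothesis $\gamma_{1,\omega_n}<(2\pi)^m\widehat\gamma_1$: it is entirely consistent for the gap $(2\pi)^m\widehat\gamma_1-\gamma_{1,\omega_n}$ to be strictly positive yet vanish as $\omega_n\to 0$. A soft Brezis--Lieb decomposition cannot exclude this scenario. The paper closes the gap by a substantially different and harder route: it first upgrades the weak convergence of $u_{1,\omega}$ to \emph{strong} $H^1_{x,y}$-convergence toward a $y$-independent ground state $u$ of the $\R^d$ problem (Lemma~\ref{strong convergence u ld}), and then differentiates the Euler--Lagrange equation in $y$ to obtain a linear equation for $w_\omega:=\nabla_y u_{1,\omega}$. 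Testing that equation against $w_\omega$, invoking Poincar\'e on $\T^m$ (legitimate since $\int_{\T^m}w_\omega\,dy=0$), the $L^\infty$-bound on $u$, and the strong convergence to control $\int(|u_{1,\omega}|^\alpha-|u|^\alpha)|w_\omega|^2$, one arrives at $\|w_\omega\|_{H^1_{x,y}}^2(1-o_\omega(1))\leq 0$ for small $\omega$, forcing $w_\omega\equiv 0$ (Lemma~\ref{lemma no dependence}). This is exactly the quantitative mechanism you gesture at in your final sentence, but it does not emerge from a variational comparison alone---the equation is essential. Your dichotomy and $y$-derivative arguments in the last paragraph are correct and coincide with the paper's.
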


In order to prove Lemma \ref{lemma auxiliary}, we firstly collect some useful auxiliary lemmas.

\begin{lemma}\label{auxiliary lemma 1}
We have
\begin{align}\label{limit ld to infty energy sec4}
\lim_{\omega\to 0}\gamma_{1,\omega}=(2\pi)^m\wm_1.
\end{align} Moreover, if $u_{1,\omega}$ is an optimizer of $\gamma_{1,\omega}$, then
\begin{align}
\lim_{\omega\to 0}\omega^{-\frac12}\|\nabla_y u_{1,\omega}\|_2^2=0.\label{vanishing sec4}
\end{align}
\end{lemma}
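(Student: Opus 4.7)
The plan is to sandwich $\gamma_{1,\omega}$ between matching upper and lower bounds as $\omega\to 0$ and then read off (\ref{vanishing sec4}) as a direct consequence of the identity that defines $S_{1,\omega}$ on the constraint $K=0$. The upper bound is immediate: pick any positive minimizer $\phi\in H_x^1$ of $\wm_1$ (whose existence is the $m=0$ instance of Theorem \ref{thm existence ground states 1}), view it as a $y$-independent element of $H_{x,y}^1$, and note that $\nabla_y\phi\equiv 0$ makes every relevant functional factor through the $\T^m$-integration:
\begin{align*}
K(\phi)=(2\pi)^m\widehat K(\phi)=0,\qquad S_{1,\omega}(\phi)=(2\pi)^m\widehat S_1(\phi)=(2\pi)^m\wm_1.
\end{align*}
Hence $\gamma_{1,\omega}\leq (2\pi)^m\wm_1$ for every $\omega>0$. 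For the matching lower bound along $\omega_n\to 0$ with optimizers $u_n=u_{1,\omega_n}$, using $K(u_n)=0$ in the expansion
\begin{align*}
S_{1,\omega_n}(u_n)=\tfrac{1}{2\sqrt{\omega_n}}\|\nabla_y u_n\|_2^2+\tfrac{1}{2}M(u_n)+\tfrac{\alpha d-4}{4(\alpha+2)}\|u_n\|_{\alpha+2}^{\alpha+2},
\end{align*}
combined with $S_{1,\omega_n}(u_n)\leq (2\pi)^m\wm_1$ controls each of the three non-negative terms; in particular $\|\nabla_y u_n\|_2^2\lesssim\sqrt{\omega_n}\to 0$, and Lemma \ref{lemma gn additive} together with $K(u_n)=0$ forces $(u_n)_n$ to be bounded in $H_{x,y}^1$.

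Next I would introduce the $y$-average $\bar u_n(x):=(2\pi)^{-m}\int_{\T^m}u_n(x,y)\,dy$. Poincaré on $\T^m$ yields $\|u_n-\bar u_n\|_2\to 0$, and Gagliardo-Nirenberg interpolation against the uniform $H^1$-bound upgrades this to $\|u_n-\bar u_n\|_{\alpha+2}\to 0$. Then $L^2$-orthogonality of the $y$-average decomposition and standard continuity estimates for powers of the $L^{\alpha+2}$-norm give
\begin{align*}
M(u_n)&=(2\pi)^m\widehat M(\bar u_n)+o(1),\quad \|u_n\|_{\alpha+2}^{\alpha+2}=(2\pi)^m\|\bar u_n\|_{L_x^{\alpha+2}}^{\alpha+2}+o(1),\\
\|\nabla_x u_n\|_2^2&=(2\pi)^m\|\nabla_x\bar u_n\|_{L_x^2}^2+\|\nabla_x(u_n-\bar u_n)\|_2^2,
\end{align*}
so that $K(u_n)=0$ forces $\widehat K(\bar u_n)\leq o(1)$. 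Defining $\widehat I_1(v):=\tfrac12\widehat M(v)+\tfrac{\alpha d-4}{4(\alpha+2)}\|v\|_{L_x^{\alpha+2}}^{\alpha+2}$, the $\R^d$-analogue of Lemma \ref{identification m gamma} gives $\widehat I_1(\bar u_n)\geq \wm_1$ whenever $\widehat K(\bar u_n)\leq 0$; in the borderline case $0<\widehat K(\bar u_n)\to 0$, dilate $\bar u_n\mapsto t_n\bar u_n$ with $t_n>1$ chosen so that $\widehat K(t_n\bar u_n)=0$, so that the smallness of $\widehat K(\bar u_n)$ combined with the uniform non-vanishing of $\|\bar u_n\|_{\alpha+2}$ (inherited from Corollary \ref{cor lower bound} via $\|u_n-\bar u_n\|_{\alpha+2}\to 0$) forces $t_n\to 1$, after which continuity of $\widehat I_1$ yields $\widehat I_1(\bar u_n)\geq \widehat I_1(t_n\bar u_n)+o(1)\geq \wm_1+o(1)$. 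Assembling the pieces one concludes
\begin{align*}
\gamma_{1,\omega_n}\geq\tfrac{1}{2}M(u_n)+\tfrac{\alpha d-4}{4(\alpha+2)}\|u_n\|_{\alpha+2}^{\alpha+2}=(2\pi)^m\widehat I_1(\bar u_n)+o(1)\geq (2\pi)^m\wm_1+o(1),
\end{align*}
which together with the upper bound proves (\ref{limit ld to infty energy sec4}).

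Finally, inserting the identified limit $\gamma_{1,\omega_n}\to(2\pi)^m\wm_1$ back into the expansion of $S_{1,\omega_n}(u_n)$ and using $(2\pi)^m\widehat I_1(\bar u_n)\geq (2\pi)^m\wm_1+o(1)$ together with the two asymptotic identities for $M(u_n)$ and $\|u_n\|_{\alpha+2}^{\alpha+2}$ gives
\begin{align*}
\tfrac{1}{2\sqrt{\omega_n}}\|\nabla_y u_n\|_2^2=\gamma_{1,\omega_n}-(2\pi)^m\widehat I_1(\bar u_n)+o(1)\leq o(1),
\end{align*}
which is precisely (\ref{vanishing sec4}). The only delicate step in the whole scheme is the borderline regime $\widehat K(\bar u_n)\to 0^+$, where one needs to certify that the rescaling factor $t_n$ in the scaling argument stays bounded and close to $1$ rather than degenerating; this reduces to transferring the non-vanishing bound $\liminf_n\|u_n\|_{\alpha+2}\gtrsim 1$ of Corollary \ref{cor lower bound} to $\bar u_n$, which is exactly what the $L^{\alpha+2}$-closeness $\|u_n-\bar u_n\|_{\alpha+2}\to 0$ (a product of Poincaré and Gagliardo-Nirenberg) supplies.
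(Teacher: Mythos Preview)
Your argument is essentially correct and takes a genuinely different route from the paper. The paper proceeds by showing $(u_{1,\omega_n})$ has a nonzero weak limit $u$ in $H^1_{x,y}$, then uses Lemma~\ref{vc implies sc} (which relies on elliptic regularity) to pass to the limiting equation $-\Delta_x u+u=|u|^\alpha u$ on $\R^d$, invokes Pohozaev to obtain $\widehat K(u)=0$, and concludes via weak lower semicontinuity. Your approach bypasses the PDE entirely: you project onto the $y$-average $\bar u_n$, use Poincar\'e on $\T^m$ together with Gagliardo--Nirenberg to transfer the constraint $K(u_n)=0$ to $\widehat K(\bar u_n)\leq o(1)$, and then appeal to the $\R^d$-analogue of Lemma~\ref{identification m gamma}. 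This is more elementary (no regularity theory, no limiting equation) and more robust, at the price of having to handle the borderline case $\widehat K(\bar u_n)\to 0^+$ by the rescaling $t_n\bar u_n$, which you do correctly.

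One point needs repair. Your justification that ``Lemma~\ref{lemma gn additive} together with $K(u_n)=0$ forces $(u_n)_n$ to be bounded in $H^1_{x,y}$'' is not quite right: that combination yields a \emph{lower} bound on $\|\nabla_x u_n\|_2$, not an upper bound. In the intercritical case $\alpha>4/d$ the upper bound on $\|\nabla_x u_n\|_2$ comes directly from the third term $\frac{\alpha d-4}{4(\alpha+2)}\|u_n\|_{\alpha+2}^{\alpha+2}$ in your expansion (bounded $\|u_n\|_{\alpha+2}$ plus $K(u_n)=0$). In the mass-critical case $\alpha=4/d$ that term vanishes and no such bound is available; you must instead use the scale invariance $u\mapsto t^{d/2}u(t\cdot,y)$ (which leaves $S_{1,\omega}$, $K$, $M$, $\|\nabla_y\cdot\|_2$ and the $y$-average structure intact) to normalize $\|\nabla_x u_n\|_2=1$, exactly as the paper does in Corollary~\ref{cor lower bound}. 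Once this normalization is in place, your Poincar\'e/GN step $\|u_n-\bar u_n\|_{\alpha+2}\to 0$ goes through and the rest of your argument stands.
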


\begin{proof}
By assuming that a candidate satisfying $K(u)=0$ is independent of $y$ we already conclude that
\begin{align}
\gamma_{1,\omega}\leq (2\pi)^m \wm_1.\label{upper bound sec4}
\end{align} Next we prove
\begin{align}
\lim_{\omega\to 0}\|\nabla_y u_{1,\omega}\|_2^2=0.\label{vanishing1 sec4}
\end{align}
Suppose that \eqref{vanishing1 sec4} does not hold. Then we must have
\begin{align*}
\lim_{\omega\to 0}\omega^{-\frac12}\|\nabla_y u_{1,\omega}\|_2^2=\infty.
\end{align*}
Since $\mK(u_{1,\omega})=0$ and $\alpha\geq 4/d$,
\begin{align}
\gamma_{1,\omega}&=\mN_{1,\omega}(u_{1,\omega})-\frac{2}{\alpha d}\mK_{\omega}(u_{1,\omega})\nonumber\\
&=\frac{1}{2}\mM(u_{1,\omega})+\frac{1}{2\sqrt{\omega}}\|\nabla_y u_{1,\omega}\|_2^2
+(\frac12-\frac{2}{\alpha d})\|\nabla_x u_{1,\omega}\|_2^2
\geq \frac{1}{2\sqrt{\omega}}\|\nabla_y u_{1,\omega}\|_2^2\to\infty\label{contradiction sec4}
\end{align}
as $\omega\to 0$, which contradicts \eqref{upper bound sec4} and in turn proves \eqref{vanishing1 sec4}. Using \eqref{upper bound sec4} and \eqref{contradiction sec4} we infer that
\begin{align}\label{upper bound 2 sec4}
\mM(u_{1,\omega})+\|\nabla_x u_{1,\omega}\|_2^2\lesssim  \gamma_{1,\omega}\leq (2\pi)^m \wm_1<\infty.
\end{align}
Therefore $(u_{1,\omega})_\omega$ is a bounded sequence in $H_{x,y}^1$, whose weak limit is denoted by $u$. By \eqref{vanishing1 sec4} we know that $u$ is independent of $y$ and thus $u\in H_x^1$. Using Corollary \ref{cor lower bound} and Lemma \ref{lemma non vanishing limit} we also infer that $u\neq 0$. On the other hand, by Lemma \ref{vc implies sc} we know that (up to scaling in $x$-direction)
\begin{align}
-\Delta_x u_{1,\omega}-\omega^{-\frac12} \Delta_y u_{1,\omega}+u_{1,\omega}=|u_{1,\omega}|^\alpha u_{1,\omega}\quad\text{on $\R^d\times \T^m$}.\label{vanishing 3 sec4}
\end{align}
We now test \eqref{vanishing 3 sec4} with $\phi\in C_c^\infty(\R^d)$ and integrate both sides over $\R^d\times\T^m$. Notice particularly that the term $\int_{\R^d\times\T^m}  \nabla_y u_{1,\omega} \nabla_y\phi\,dxdy=0$ for any $\omega>0$ since $\phi$ is independent of $y$. Using then the weak convergence of $u_{1,\omega}$ to $u$, by sending $\omega\to 0$ we obtain
\begin{align}
-\Delta_x u+u=|u|^\alpha u\quad\text{on $\R^d$}.\label{vanishing 4 sec4}
\end{align}
In particular, by the Pohozaev's identity on $\R^d$ we know that $\wmK(u)=0$ and consequently $\wmN_1(u)\geq \wm_1$. Finally, using the weakly lower semicontinuity of norms we conclude
\begin{align}
\gamma_{1,\omega}&=\mN_{1,\omega}(u_{1,\omega})=\mN_{1,\omega}(u_{1,\omega})-\frac{2}{\alpha d}\mK(u)\nonumber\\
&=\frac{1}{2\sqrt{\omega}}\|\nabla_y u_{1,\omega}\|_2^2+\frac{1}{2}\mM(u_{1,\omega})+(\frac{1}{2}-\frac{2}{\alpha d})\|\nabla_x u_{1,\omega}\|_2^2\nonumber\\
&\geq \frac{1}{2}\mM(u_{1,\omega})+(\frac{1}{2}-\frac{2}{\alpha d})\|\nabla_x u_{1,\omega}\|_2^2\nonumber\\
&\geq \frac{1}{2}\mM(u)+(\frac{1}{2}-\frac{2}{\alpha d})\|\nabla_x u\|_2^2+o_\omega(1)\nonumber\\
&=(2\pi)^m \wmN_1(u)+o_\omega(1)\geq (2\pi)^m\wm_1+o_\omega(1).\label{vanishing 5 sec4}
\end{align}
Letting $\omega\to 0$ and taking \eqref{upper bound sec4} into account we conclude \eqref{limit ld to infty energy sec4}. Finally, \eqref{vanishing sec4} follows directly from the previous computations by not neglecting $\omega^{-\frac12}\|u_{1,\omega}\|_2^2$ therein. This completes the desired proof.
\end{proof}

\begin{lemma}\label{strong convergence u ld}
There exists some $u\in H_x^1\setminus\{0\}$ such that up to a subsequence, $u_{1,\omega}\to u$ strongly in $H_{x,y}^1$.
\end{lemma}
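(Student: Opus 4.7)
The plan is to upgrade the weak convergence obtained in the course of Lemma \ref{auxiliary lemma 1} to strong $H^1_{x,y}$-convergence in three stages: first $L^2_{x,y}$-convergence via mass, then $L^{\alpha+2}$-convergence of the difference via the Gagliardo-Nirenberg inequality, and finally the $x$-gradient via the Pohozaev identity $K(u_{1,\omega})=0$. From the proof of Lemma \ref{auxiliary lemma 1}, along a subsequence one has $u_{1,\omega}\rightharpoonup u$ in $H^1_{x,y}$ for some $u\in H_x^1\setminus\{0\}$ (extended constantly in $y$), with $\|\nabla_y u_{1,\omega}\|_2\to 0$ and $\gamma_{1,\omega}\to(2\pi)^m\widehat\gamma_1$. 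Inspecting the inequality chain in that proof, all intermediate inequalities saturate at the limit, forcing $\widehat K(u)=0$ and $\widehat N_1(u)=\widehat\gamma_1$; in other words, $u$ is itself a minimizer of $\widehat\gamma_1$.

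For the $L^2$-convergence, I exploit $K(u_{1,\omega})=0$ to rewrite
\[
\gamma_{1,\omega}=\bigl(\tfrac12-\tfrac{2}{\alpha d}\bigr)\|\nabla_x u_{1,\omega}\|_2^2+\tfrac{1}{2\sqrt\omega}\|\nabla_y u_{1,\omega}\|_2^2+\tfrac12 M(u_{1,\omega}).
\]
All three coefficients are non-negative (since $\alpha\geq\tas$). The $\nabla_y$-term vanishes, and weak lower semicontinuity of the remaining two terms together with $(2\pi)^m\widehat N_1(u)=(2\pi)^m\widehat\gamma_1=\lim\gamma_{1,\omega}$ leaves no room for strict inequality in the limit. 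The standard sum-of-lower-semicontinuous-functions argument then forces each remaining summand to converge to its $u$-counterpart, yielding in particular $M(u_{1,\omega})\to M(u)$; combined with the weak convergence this gives $u_{1,\omega}\to u$ strongly in $L^2_{x,y}$.

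Setting $v_\omega:=u_{1,\omega}-u$, Lemma \ref{lemma gn additive} yields
\[
\|v_\omega\|_{\alpha+2}^{\alpha+2}\lesssim\|\nabla_x v_\omega\|_2^{\alpha d/2}\,\|v_\omega\|_2^{(4-\alpha(d+m-2))/2}\bigl(\|v_\omega\|_2^{\alpha m/2}+\|\nabla_y v_\omega\|_2^{\alpha m/2}\bigr).
\]
The first factor is bounded, and the middle factor has strictly positive exponent (since $\alpha<\tbs$), so $\|v_\omega\|_2\to 0$ forces $\|v_\omega\|_{\alpha+2}\to 0$. The Brezis-Lieb lemma then produces $\|u_{1,\omega}\|_{\alpha+2}^{\alpha+2}\to\|u\|_{\alpha+2}^{\alpha+2}$, and the Pohozaev identities $K(u_{1,\omega})=0$, $\widehat K(u)=0$ translate this into
\[
\|\nabla_x u_{1,\omega}\|_2^2=\tfrac{\alpha d}{2(\alpha+2)}\|u_{1,\omega}\|_{\alpha+2}^{\alpha+2}\longrightarrow\tfrac{\alpha d}{2(\alpha+2)}\|u\|_{\alpha+2}^{\alpha+2}=\|\nabla_x u\|_2^2,
\]
upgrading the weak $\nabla_x$-convergence to strong. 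Together with the already-obtained strong $L^2$- and $\nabla_y$-convergences this closes the proof.

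The delicate case is the mass-critical one $\alpha=\tas$, where $\tfrac12-\tfrac{2}{\alpha d}=0$ and the weak-lsc identification in the second paragraph cannot by itself give any information on $\|\nabla_x u_{1,\omega}\|_2$. The Gagliardo-Nirenberg + Brezis-Lieb + Pohozaev detour is exactly what bypasses this degeneracy, leveraging the strong $L^2$-convergence from the second stage and the scaling content of $K(u_{1,\omega})=0$.
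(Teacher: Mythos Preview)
Your proof is correct and takes a genuinely different route from the paper's, particularly in the mass-critical case. The paper splits into two cases: for $\alpha>2_*$ the weak-lsc identity you use in your second paragraph already delivers \emph{both} $M$-convergence and $\|\nabla_x\cdot\|_2$-convergence, since the gradient coefficient $\tfrac12-\tfrac{2}{\alpha d}$ is strictly positive. For $\alpha=2_*$ that coefficient vanishes, so the paper instead exploits the Nehari identity $N_{1,\omega}(u_{1,\omega})=0$ (coming from the elliptic equation satisfied by $u_{1,\omega}$) together with $K(u_{1,\omega})=0$ to obtain the exact formula $\gamma_{1,\omega}=d^{-1}\|\nabla_x u_{1,\omega}\|_2^2$, and then passes to the limit directly.

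Your detour through Gagliardo--Nirenberg and the $K=0$ relation instead gives a unified argument for all $\alpha\in[2_*,2^*)$: mass convergence from weak-lsc, then $L^{\alpha+2}$-convergence of the difference via Lemma~\ref{lemma gn additive} (using $H^1$-boundedness and strong $L^2$-convergence), then $x$-gradient convergence from the Pohozaev relation. This avoids an explicit appeal to the Nehari identity at this stage---though that identity is still at work upstream in Lemma~\ref{auxiliary lemma 1}, where it underpins both the $H^1_{x,y}$-boundedness of $(u_{1,\omega})_\omega$ in the mass-critical case and the elliptic equation satisfied by the weak limit $u$. The paper's approach is a line or two shorter in each branch; yours trades that brevity for eliminating the case split altogether. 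One minor remark: Brezis--Lieb is more than you need in your third stage; once $\|u_{1,\omega}-u\|_{\alpha+2}\to 0$, the convergence $\|u_{1,\omega}\|_{\alpha+2}\to\|u\|_{\alpha+2}$ follows from the triangle inequality alone.
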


\begin{proof}
We first consider the case $\alpha\neq \tas$. From the proof of Lemma \ref{auxiliary lemma 1} we know that there exists some $u\in H_x^1\setminus\{0\}$ such that $u_{1,\omega}\rightharpoonup u$ weakly in $H_{x,y}^1$ and $u$ is an optimizer of $\wm_1$. Using $\mK(u_{1,\omega})=\wmK(u)=0$, weakly lower semicontinuity of norms, \eqref{vanishing sec4} and \eqref{limit ld to infty energy sec4} we obtain
\begin{align*}
(2\pi)^m\wm_1&=(2\pi)^m(\wmN_1(u)-\frac{2}{\alpha d}\wmK(u))=(2\pi)^m\bg(\frac{1}{2}\wmM(u)+(\frac{1}{2}-\frac{2}{\alpha d})\|\nabla_x u\|_{L_x^2}^2\bg)\nonumber\\
&\leq \liminf_{\omega\to 0}\bg(\frac{1}{2}\mM(u_{1,\omega})+(\frac{1}{2}-\frac{2}{\alpha d})\|\nabla_x u_{1,\omega}\|_{2}^2+\frac{1}{2\sqrt{\omega}}\|\nabla_y u_{1,\omega}\|_2^2\bg)
= \lim_{\omega\to 0}\gamma_{1,\omega}=(2\pi)^m\wm_1.
\end{align*}
The equality holds if and only if
\begin{align*}
\|u_{1,\omega}\|_2^2\to (2\pi)^m\|u\|_{L_x^2}^2\quad\text{and}\quad \|\nabla_x u_{1,\omega}\|_2^2\to(2\pi)^m\|\nabla_x u\|_{L_x^2}^2
\end{align*}
as $\omega\to 0$. Combining with \eqref{vanishing1 sec4} and the fact that $u_{1,\omega}\rightharpoonup u$ weakly in $H_{x,y}^1$ we conclude the strong convergence of $u_{1,\omega}$ to $u$ in $H_{x,y}^1$ as $\omega\to 0$.

We now consider the case $\alpha=\tas$. In this case we are unable to obtain $\|\nabla_x u_{1,\omega}\|_2^2\to(2\pi)^m\|\nabla_x u\|_{L_x^2}^2$ using the previous arguments since $\frac{1}{2}-\frac{2}{\alpha d}=0$. Nevertheless, it still holds $\lim_{\omega\to 0}\gamma_{1,\omega}=(2\pi)^m\wm_1$. By defining
\[N_{1,\omega}(u):=\|\nabla_x u\|_2^2
+\omega^{-\frac12}\|\nabla_y u\|_2^2+M(u)-\|u\|_{\tas+2}^{\tas+2}\]
we also know that $N_{1,\omega}(u_{1,\omega})=0$. From this and $K(u_{1,\omega})=0$ it follows $\gamma_{1,\omega}=\frac{\tas}{2(\tas+2)}\|u_{1,\omega}\|_{\tas+2}^{\tas+2}=d^{-1}\|\nabla_x u_{1,\omega}\|_2^2$,
see the last part of the proof of Theorem \ref{thm existence ground states 1}. In the same manner we deduce $\wm_1=d^{-1}\|\nabla_x u\|_{L_x^2}^2$. Summing up the arguments we conclude again that $\|\nabla_x u_{1,\omega}\|_2^2\to(2\pi)^m\|\nabla_x u\|_{L_x^2}^2$, which completes the desired proof.
\end{proof}

\begin{lemma}\label{lemma no dependence}
There exists some $\omega_0$ such that $\nabla_y u_{1,\omega}=0$ for all $\omega<\omega_0$.
\end{lemma}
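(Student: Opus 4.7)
The plan is to argue by contradiction, exploiting the fact that the coefficient $\omega^{-1/2}$ in front of $-\Delta_y$ blows up as $\omega\to 0$ and therefore penalizes any $y$-dependent fluctuation of the minimizer. Suppose, for contradiction, that there is a sequence $\omega_n\to 0$ with $\nabla_y u_{1,\omega_n}\neq 0$, and set $u_n:=u_{1,\omega_n}$ (which we may take to be positive, just as in the proof of Theorem \ref{thm existence ground states 1}). I would decompose $u_n=\bar v_n+w_n$, where $\bar v_n(x):=(2\pi)^{-m}\int_{\T^m}u_n(x,y)\,dy$ is the $y$-average and $w_n:=u_n-\bar v_n$ has vanishing $y$-mean. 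By Lemma \ref{strong convergence u ld} we then have $u_n\to u$ strongly in $H_{x,y}^1$ for some $u\in H_x^1\setminus\{0\}$; consequently $\bar v_n\to u$ in $H_x^1$ and $w_n\to 0$ in $H_{x,y}^1$.

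By an analogue of Lemma \ref{vc implies sc} applied to $S_{1,\omega}$ (together with Theorem \ref{thm existence ground states 1}(ii) to absorb any residual scaling freedom in the mass-critical case), after an $x$-rescaling that has no effect on whether $\nabla_y u_n$ vanishes, each $u_n$ satisfies
\begin{align*}
-\Delta_x u_n-\omega_n^{-1/2}\Delta_y u_n+u_n=u_n^{\alpha+1}.
\end{align*}
Testing this equation against $w_n$ and using that $\bar v_n$ is $y$-independent while $w_n$ has vanishing $y$-mean (so that every cross term drops out), I would arrive at the identity
\begin{align*}
\|\nabla_x w_n\|_2^2+\omega_n^{-1/2}\|\nabla_y w_n\|_2^2+\|w_n\|_2^2=\int u_n^{\alpha+1}w_n.
\end{align*}
An expansion of $u_n^{\alpha+1}=(\bar v_n+w_n)^{\alpha+1}$ via the fundamental theorem of calculus combined with $\int\bar v_n^{\alpha+1}w_n=0$ rewrites the right-hand side as $(\alpha+1)\int\bar v_n^{\alpha}w_n^2+R_n$, and a standard pointwise estimate on the Taylor remainder (distinguishing $\alpha\geq 1$ from $\alpha<1$) together with Sobolev embedding gives $|R_n|=o(\|w_n\|_{H_{x,y}^1}^2)$.

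The coercivity step is then as follows. I would bound the principal quadratic term by $(\alpha+1)\int\bar v_n^{\alpha}w_n^2\leq C\|w_n\|_2^2$ with $C:=(\alpha+1)\sup_n\|\bar v_n\|_{L_x^\infty}^\alpha$, and apply the Poincar\'e inequality $\|w_n\|_2^2\leq\|\nabla_y w_n\|_2^2$, valid because the smallest nonzero eigenvalue of $-\Delta_y$ on $\T^m=(\R/2\pi\Z)^m$ equals $1$. Substituting these bounds into the identity above yields
\begin{align*}
\|\nabla_x w_n\|_2^2+(\omega_n^{-1/2}-C)\|\nabla_y w_n\|_2^2+\|w_n\|_2^2\leq o(\|w_n\|_{H_{x,y}^1}^2).
\end{align*}
For $n$ large enough, $\omega_n^{-1/2}-C\geq 1$, so the left-hand side dominates $\|w_n\|_{H_{x,y}^1}^2$, which forces $w_n\equiv 0$ for all such $n$ and hence $\nabla_y u_n=\nabla_y w_n\equiv 0$, contradicting the assumption.

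The main technical point will be the uniform $L_x^\infty$-boundedness of $(\bar v_n)_n$ needed to define $C$: the strong $H_x^1$ convergence $\bar v_n\to u$ alone does not supply such a bound when $d\geq 2$. I plan to derive it by an elliptic bootstrap applied to the averaged equation $-\Delta_x\bar v_n+\bar v_n=\overline{u_n^{\alpha+1}}$ on $\R^d$, which is obtained by integrating the Euler--Lagrange equation in $y$ (the $\Delta_y$-term drops out by periodicity): uniform $L_{x,y}^p$-bounds on $(u_n)_n$ for $p\leq\tbs+2$ combined with Jensen's inequality yield uniform $L_x^{q_0}$-bounds on $\overline{u_n^{\alpha+1}}$, and repeated applications of $W^{2,q}$-regularity and Sobolev embedding then promote these to a uniform $L_x^\infty$-bound.
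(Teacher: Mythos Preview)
Your overall plan is sound and very close in spirit to the paper's argument: both exploit that the coefficient $\omega^{-1/2}$ in front of $-\Delta_y$ blows up as $\omega\to 0$, together with the Poincar\'e inequality on $\T^m$, to force the oscillatory part to vanish. The paper, however, works with $w_\omega:=\nabla_y u_{1,\omega}$ (differentiating the Euler--Lagrange equation in $y$ and testing the resulting linearized equation against $w_\omega$), whereas you project onto the mean-zero part $w_n=u_n-\bar v_n$ and test the original equation. Both choices have vanishing $y$-mean and lead to structurally identical coercivity estimates; your route avoids differentiating the nonlinearity at the cost of a Taylor expansion of $u_n^{\alpha+1}$, which you handle correctly.

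The one step that does not close as written is the uniform $L_x^\infty$-bound on $(\bar v_n)_n$ via a bootstrap on the averaged equation $-\Delta_x\bar v_n+\bar v_n=\overline{u_n^{\alpha+1}}$. A single application of $W^{2,q}$-regularity gives $\bar v_n\in W^{2,q_0}_x$ with $q_0=\frac{\tbs+2}{\alpha+1}$, but ``repeated applications'' cannot improve this: the right-hand side $\overline{u_n^{\alpha+1}}$ depends on $u_n$, not on $\bar v_n$, so upgrading the integrability of $\bar v_n$ does nothing to the source term and the iteration stalls. When $m=1$ and $d\geq 3$ one checks that $q_0\leq d/2$ for $\alpha$ close to $\tbs$, so the single step does not reach $L_x^\infty$ either.

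The fix is immediate and is exactly what the paper does: replace $\bar v_n^\alpha$ by $u^\alpha$ in the principal quadratic term, where $u\in H_x^1$ is the strong limit from Lemma~\ref{strong convergence u ld}. Since $u$ solves $-\Delta_x u+u=|u|^\alpha u$ on $\R^d$, classical elliptic regularity gives $u\in L_x^\infty$; the discrepancy $(\alpha+1)\int(\bar v_n^\alpha-u^\alpha)w_n^2$ is then absorbed into the $o(\|w_n\|_{H_{x,y}^1}^2)$ error using H\"older, Sobolev, and the strong convergence $\bar v_n\to u$ in $L_x^{\alpha+2}$ (split $\alpha\leq 1$ and $\alpha>1$ as you already do for the Taylor remainder). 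With this modification your argument goes through without further change.
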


\begin{proof}
Let $w_\omega:=\nabla_y u_{1,\omega}$. Then using Lemma \ref{vc implies sc} we obtain
\begin{align}\label{no dependence 1}
-\Delta_x w_\omega+\omega^{-\frac12} w_\omega+w_\omega=\nabla_y(|u_{1,\omega}|^\alpha u_{1,\omega})=(\alpha+1)|u_{1,\omega}|^\alpha w_\omega.
\end{align}
Testing \eqref{no dependence 1} with $w_\omega$ and rewriting suitably, we infer that
\begin{align}
0&=\|\nabla_x w_\omega\|_2^2+\omega^{-\frac12}\|\nabla_y w_\omega\|_2^2+\|w_\omega\|_2^2-(\alpha+1)\int_{\R^d\times \T^m}|u_{1,\omega}|^\alpha |\bar{w}_\omega|^2\,dxdy\nonumber\\
&=(\omega^{-\frac12}-1)\|\nabla_y w_\omega\|_2^2-(\alpha+1)\int_{\R^d\times \T^m}|u|^\alpha |w_\omega|^2\,dxdy\label{no dependence 2}\\
&+\|w_\omega\|_{H_{x,y}^1}^2-(\alpha+1)\int_{\R^d\times \T^m}(|u_{1,\omega}|^\alpha -|u|^\alpha)|w_\omega|^2\,dxdy.\label{no dependence 3}
\end{align}
For \eqref{no dependence 2}, we firstly point out that by the classical elliptic regularity on $\R^d$ and the Sobolev's embedding we have $u\in L^\infty(\R^d)$. On the other hand, since $\int_\T w_\omega\,dy=0$, we have $\|w_\omega\|_2\leq \|\nabla_y w_\omega\|_2$. Summing up, we conclude that
\begin{align*}
\eqref{no dependence 2}\geq (\omega^{-\frac12}-1-(\alpha+1)\|u\|^{\alpha}_{L_x^\infty})\|\nabla_y w_\omega\|_2^2\geq 0
\end{align*}
for all $0<\omega\ll 1$. For \eqref{no dependence 3}, we discuss the cases $\alpha\leq 1$ and $\alpha>1$ separately. For $\alpha\leq 1$, we estimate the second term in \eqref{no dependence 3} using subadditivity of concave function, H\"older's inequality, Lemma \ref{strong convergence u ld} and the Sobolev embedding $H_{x,y}^1\hookrightarrow L_{x,y}^{\alpha+2}$:
\begin{align*}
&\,\int_{\R^d\times \T^m}(|u_{1,\omega}|^\alpha -|u|^\alpha)|w_\omega|^2\,dxdy\nonumber\\
\leq&\,\int_{\R^d\times \T^m}|u_{1,\omega}-u|^\alpha|w_\omega|^2\,dxdy\nonumber\\
\leq& \,\|u_{1,\omega}-u\|_{\alpha+2}^\alpha\|w_\omega\|_{\alpha+2}^2\leq o_\omega(1)\|w_\omega\|^2_{H_{x,y}^1}.
\end{align*}
The case $\alpha>1$ can be similarly estimated as follows:
\begin{align*}
&\,\int_{\R^d\times \T^m}(|u_{1,\omega}|^\alpha -|u|^\alpha)|w_\omega|^2\,dxdy\nonumber\\
\lesssim&\,\int_{\R^d\times \T^m}|u_{1,\omega}-u||w_\omega|^2(|u_{1,\omega}|^{\alpha-1}+|u|^{\alpha-1})\,dxdy\nonumber\\
\leq& \,\|u_{1,\omega}-u\|_{\alpha+2}(\|u_{1,\omega}\|^{\alpha-1}_{\alpha+2}+\|u\|^{\alpha-1}_{\alpha+2})\|w_\omega\|_{\alpha+2}^2\leq o_\omega(1)\|w_\omega\|^2_{H_{x,y}^1}.
\end{align*}
Therefore, \eqref{no dependence 2} and \eqref{no dependence 3} imply
\begin{align*}
0\geq \|w_\omega\|_{H_{x,y}^1}^2(1-o_\omega(1))\gtrsim \|w_\omega\|_{H_{x,y}^1}^2
\end{align*}
for all $\omega<\omega_0$ for some sufficiently large $\omega_0$. We therefore conclude that $0=w_\omega=\nabla_y u_{1,\omega}$ for all $\omega>\omega_0$ and the desired proof is complete.
\end{proof}

Having all the preliminaries we are in a position to prove Lemma \ref{lemma auxiliary}.

\begin{proof}[Proof of Lemma \ref{lemma auxiliary}]
Define
\begin{align*}
\omega_*:=\sup\{\omega\in(0,\infty):\gamma_{1,\omega}=(2\pi)^m \wm_{1}\text{ for all $\omega\leq\omega$}\}.
\end{align*}
From Lemma \ref{lemma no dependence} it already follows that $\omega_*<\infty$ and we shall need to show $\omega_*>0$. If suffices to show
\begin{align*}
\lim_{\omega\to \infty}\gamma_{1,\omega}<(2\pi)^m\wm_{1}.
\end{align*}
To see this, we firstly define the function $\rho:[0,2\pi]\to[0,\infty)$ as follows: Let $a\in(0,\pi)$ such that $a>\pi-3\pi\bg(\frac{3}{\alpha+3}\bg)^{\frac2\alpha}$. This is always possible for $a$ sufficiently close to $\pi$. Then we define $\rho$ by
\begin{align*}
\rho(y)=\left\{
\begin{array}{ll}
0,&y\in[0,a]\cup[2\pi-a,2\pi],\\
(\pi-a)^{-1}\bg(\frac{\alpha+3}{3}\bg)^{\frac{1}{\alpha}}(y-a),&y\in[a,\pi],\\
(\pi-a)^{-1}\bg(\frac{\alpha+3}{3}\bg)^{\frac{1}{\alpha}}(2\pi-a-y),&y\in[\pi,2\pi-a].
\end{array}
\right.
\end{align*}
One verifies by direct computation that $\rho\in H_y^1$ and
\begin{align*}
\|\rho\|_{L^2(\T)}^2<2\pi\quad\text{and}\quad\|\rho\|_{L^2(\T)}^2=\|\rho\|_{L^{\alpha+2}(\T)}^{\alpha+2}.
\end{align*}
Moreover, let $P$ be an optimizer of $\wm_1$. Particularly, by Pohozaev's identity and definition we have
\begin{gather*}
\|\nabla_x P\|_{L_x^2}^2=\frac{\alpha d}{2(\alpha+2)}\|P\|_{L_x^{\alpha+2}}^{\alpha+2},\\
\wm_1=\frac{1}{2}\wmM(P)+\frac{1}{2}\|\nabla_x P\|_{L_x^2}^2-\frac{1}{\alpha+2}\|P\|_{L_x^{\alpha+2}}^{\alpha+2}.
\end{gather*}
Now define $\psi(x,y):=\rho(y_1)\cdots\rho(y_m)P(x)$. Then
\begin{align*}
\mK(\psi)&=\|\nabla_x\psi\|_2^2-\frac{\alpha d}{2(\alpha+2)}\|\psi\|_{\alpha+2}^{\alpha+2}\nonumber\\
&=\|\rho\|_{L^2(\T)}^{2m}\|\nabla_x P\|_{L_x^2}^2
-\frac{\alpha d}{2(\alpha+2)}\|\rho\|_{L^{\alpha+2}(\T)}^{\alpha+2}\|P\|_{L_x^{\alpha+2}}^{(\alpha+2)m}\nonumber\\
&=\|\rho\|_{L^2(\T)}^{2m}\bg(\|\nabla_x P\|_{L_x^2}^2-\frac{\alpha d}{2(\alpha+2)}\|P\|_{L_x^{\alpha+2}}^{\alpha+2}\bg)=0
\end{align*}
and
\begin{align*}
\mN_{1,\infty}(\psi)
&=\frac{1}{2}\mM(\psi)+\frac{1}{2}\|\nabla_x \psi\|_{2}^2-\frac{1}{\alpha+2}\|\psi\|_{{\alpha+2}}^{\alpha+2}\nonumber\\
&=\frac{1}{2}\|\rho\|_{L^2(\T)}^{2m}\wmM(P)+\frac{1}{2}\|\rho\|_{L^2(\T)}^{2m}\|\nabla_x P\|_{L_x^2}^2
-\frac{1}{\alpha+2}\|\rho\|_{L^{\alpha+2}(\T)}^{(\alpha+2)m}\|P\|_{L_x^{\alpha+2}}^{\alpha+2}\nonumber\\
&=\|\rho\|_{L^2(\T)}^{2m}\bg(\frac{1}{2}\wmM(P)+\frac{1}{2}\|\nabla_x P\|_{L_x^2}^2-\frac{1}{\alpha+2}\|P\|_{L_x^{\alpha+2}}^{\alpha+2}\bg)<(2\pi)^m\wm_1.
\end{align*}
Consequently,
\begin{align*}
\lim_{\omega\to \infty}\gamma_{1,\omega}\leq \lim_{\omega\to \infty}\mN_{1,\omega}(\psi)=\mN_{1,\infty}(\psi)<(2\pi)^m\wm_1.
\end{align*}
That a minimizer of $\gamma_{1,\omega}$ has non-trivial $y$-dependence for $\omega>\omega_*$ follows already from the fact that $\gamma_{1,\omega}<(2\pi)^m \wm_1$ and the mapping $\omega\mapsto \gamma_{1,\omega}$ is monotone decreasing. We borrow an idea from \cite{GrossPitaevskiR1T1} to show that any minimizer of $\gamma_{1,\omega}$ for $\omega<\omega_*$ must be $y$-independent. Assume the contrary that there exists an optimizer $u_{1,\omega}$ of $\gamma_{1,\omega}$ satisfying $\|\nabla_y u_{1,\omega}\|_2^2\neq 0$. Let $\mu\in(\omega,\omega_*)$. Then
\begin{align*}
(2\pi)^m \wm_1=m_{1,\mu}\leq \mN_{1,\mu}(u_{1,\omega})=\mN_{1,\omega}(u_{1,\omega})+\frac{\mu-\omega}{2}\|\nabla_y u_{1,\omega}\|_2^2<\mN_{1,\omega}(u_{1,\omega})=\gamma_{1,\omega}=(2\pi)^m \wm_1,
\end{align*}
a contradiction. This completes the desired proof.
\end{proof}

We now prove Theorem \ref{thm threshold frequency} by using Lemma \ref{lemma auxiliary} and a simple rescaling argument.

\begin{proof}[Proof of Theorem \ref{thm threshold frequency}]
For $\kappa>0$, define the scaling operator $T_\kappa$ by
\begin{align}\label{def of t ld}
T_\kappa u(x,y):=\kappa^{\frac{2}{\alpha}}u(\kappa x,y).
\end{align}
Then
\begin{align*}
\|T_\kappa(\nabla_x u)\|_2^2&=\kappa^{2+\frac4\alpha-d}\|\nabla_x u\|_2^2,\\
\|T_\kappa u\|_{\alpha+2}^{\alpha+2}&=\kappa^{2+\frac4\alpha-d}\|u\|_{\alpha+2}^{\alpha+2},\\
\mK(T_\kappa u)&=\kappa^{2+\frac4\alpha-d}\mK(u),\\
\|T_\kappa (\nabla_y u)\|_2^2&=\kappa^{\frac4\alpha-d}\|\nabla_y u\|_2^2,\\
\|T_\kappa u\|_2^2&=\kappa^{\frac4\alpha-d}\|u\|_2^2.
\end{align*}
Let $\kappa=\sqrt{\omega}$. Direct computation shows that $\mN_\omega(T_\kappa u)=\omega^{1+\frac2\alpha-\frac{d}{2}}\gamma_{1,\omega^2}(u)$. This particularly implies $\gamma_{\omega}=\omega^{1+\frac2\alpha-\frac{d}{2}}\gamma_{1,\omega^2}$. By same rescaling arguments we also infer that $\omega^{1+\frac2\alpha-\frac{d}{2}}\wm_{1}=\wm_{\omega}$. Notice also that the mapping $\omega\mapsto \omega^{2}$ is strictly monotone increasing on $(0,\infty)$. Thus by Lemma \ref{lemma auxiliary} there exists some $\omega_*\in(0,\infty)$ such that
\begin{itemize}
\item For all $\omega \in(\omega_*,\infty)$ we have
$$\gamma_{\omega}=\omega^{1+\frac2\alpha-\frac{d}{2}}\gamma_{1,\omega^2}
<\omega^{1+\frac2\alpha-\frac{d}{2}}(2\pi)^m \wm_{1}=(2\pi)^m \wm_{\omega}.$$

\item For all $\omega \in(0,\omega_*)$ we have
$$\gamma_{\omega}=\omega^{1+\frac2\alpha-\frac{d}{2}}\gamma_{1,\omega^2}
=\omega^{1+\frac2\alpha-\frac{d}{2}}(2\pi)^m \wm_{1}=(2\pi)^m \wm_{\omega}.$$
\end{itemize}
The statements in Theorem \ref{thm threshold frequency}, up to the endpoint $\omega_*$, thus follow.

It remains to consider the case $\omega=\omega_*$, namely to prove $\gamma_{\omega_*}=(2\pi)^m\widehat \gamma_{\omega_*}$. We notice that this will immediately follow as long as the continuity of the mappings $\omega\mapsto \gamma_\omega$ and $\omega\mapsto \widehat\gamma_\omega$ on $(0,\infty)$ is proved. We shall only give the proof for the continuity statement of $\omega\mapsto \gamma_\omega$, the one of $\omega\mapsto \widehat\gamma_\omega$ is being similarly deduced. Let $\omega>0$ and let $(\omega_n)_n\subset(0,\infty)$ with $\omega_n\to\omega$. Let $u_\omega$ be an optimizer of $\gamma_\omega$. By definition and $\omega_n\to\omega$ we already have
\begin{align}\label{limsup cont}
\limsup_{n\to\infty}\gamma_{\omega_n}\leq \lim_{n\to\infty}S_{\omega_n}(u_\omega)=S_{\omega}(u_\omega)=\gamma_\omega.
\end{align}
Next, let $u_n$ be an optimizer of $\gamma_{\omega_n}$ that additionally solves \eqref{nls2} with the same given $\omega_n$. From \eqref{limsup cont} and $\omega_n\to\omega$ we know that $(\gamma_{\omega_n})_n$ and $(\omega_n)_n$ are bounded sequences and $\lim_{n\to\infty}\omega_n=\omega>0$. Then arguing as in \eqref{uniform bddnes} we infer that $(u_n)_n$ is a bounded sequence in $H_{x,y}^1$, whose weak limit is denoted by $u$. Moreover, using \eqref{key gn inq2} we may also assume that $u\neq 0$. Since $u_n$ solve \eqref{nls2} with frequency $\omega_n$ and $\omega_n\to \omega$, we infer that $u$ solves \eqref{nls2} with frequency $\omega$. Consequently, by Lemma \ref{lemma of poho} we know that $\mK(u)=0$. Finally, using weakly lower semicontinuity of norms and Lemma \ref{identification m gamma} we infer that
\begin{align*}
\gamma_{\omega}\leq \mN_\omega(u)=\mI_\omega(u)\leq\liminf_{n\to\infty}\mI_\omega(u_n)=\liminf_{n\to\infty}\mI_{\omega_n}(u_n)
=\liminf_{n\to\infty}\gamma_{\omega_n}.
\end{align*}
This completes the desired proof.
\end{proof}

\section{Identification of $\gamma_\omega$ and $\beta_\omega$: Proof of Proposition \ref{prop iden of gamma and beta}}\label{sec 3+}

\begin{proof}[Proof of Proposition \ref{prop iden of gamma and beta}]
Let first $u_\omega$ be an optimizer of $\gamma_\omega$, whose existence is guaranteed by Theorem \ref{thm existence ground states 1}. By Lemma \ref{vc implies sc} we know that $u_\omega$ is a solution of \eqref{nls2} with the given $\omega$, which also implies that $N_\omega(u_\omega)=0$. This in turn implies $\gamma_\omega=S_\omega(u_\omega)\geq \beta_\omega$.

On the other hand, we notice that the problem $\beta_\omega$ in the fractional setting has been considered in \cite{esfahani2023focusing}. Using the same arguments in \cite{esfahani2023focusing} with slight modification, one easily verifies that for any $\omega>0$ the variational problem $\beta_\omega$ (in our setting) also possesses an optimizer $v_\omega$ which solves \eqref{nls2} with the given $\omega$. By Lemma \ref{lemma of poho} we know that $K(v_\omega)=0$. Repeating the previous arguments yields $\beta_\omega\geq\gamma_\omega$, which in turn completes the proof.
\end{proof}

\section{The Legendre-Fenchel identity: Proof of Theorem \ref{thm legendre}}\label{sec 3}
In this subsection we give the proof of Theorem \ref{thm legendre}. The core of the proof is the following crucial lemma concerning the properties of the mapping $\omega\mapsto \gamma_\omega$.

\begin{lemma}\label{lemma 4.1}
The following statements hold:
\begin{itemize}
\item[(i)]There exists some $C>0$ such that for any $\omega>0$ we have $\gamma_\omega\leq C\omega^{1+\frac{2}{\alpha}-\frac{d+m}{2}}$.
\item[(ii)]For $\omega\in(0,\infty)$, define
\begin{align*}
\Gamma_{\mathrm{sup},\omega}:=\sup\{M(u):\,u\in\mathrm{argmin}\,\gamma_\omega\},\quad
\Gamma_{\mathrm{inf},\omega}:=\inf\{M(u):\,u\in\mathrm{argmin}\,\gamma_\omega\}.
\end{align*}
Then
\begin{align}\label{4.0}
\left\{
\begin{array}{cl}
\lim_{\omega\to 0}\Gamma_{\mathrm{inf},\omega}=\infty,&\text{if $\alpha\in(\tas,\tbs)$},\\
\\
\lim_{\omega\to 0}\Gamma_{\mathrm{inf},\omega}=2\pi \widehat M(Q),&\text{if $\alpha=\tas$ and $m=1$}
\end{array}
\right.
\end{align}
and $\lim_{\omega\to\infty}\Gamma_{\mathrm{sup},\omega}=0$.
\end{itemize}
\end{lemma}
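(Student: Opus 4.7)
The plan is to address (i) first, and then exploit its bound in the easier half of (ii). A recurring tool will be the identity $S_\omega(u)=I_\omega(u)$ whenever $K(u)=0$, which is immediate from \eqref{def of mI beta}.

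For (i), I would treat $\omega\in(0,1]$ and $\omega\in[1,\infty)$ separately. When $\omega\le 1$, the trivial bound $\gamma_\omega\le (2\pi)^m\widehat\gamma_\omega$ (test against $y$-independent functions, as already exploited in Theorem \ref{thm threshold frequency}) combined with the standard $\R^d$-scaling $\widehat\gamma_\omega=\omega^{1+\frac{2}{\alpha}-\frac{d}{2}}\widehat\gamma_1$ suffices, since $\omega\le 1$ and $m\ge 0$ give $\omega^{1+\frac{2}{\alpha}-\frac{d}{2}}\le \omega^{1+\frac{2}{\alpha}-\frac{d+m}{2}}$. For $\omega\ge 1$ I would construct a concentrated test function: pick $\phi\in C_c^\infty(\R^{d+m})$ supported in $\R^d\times(-\pi,\pi)^m$ and rescale in $x$ so that $\|\nabla_x\phi\|_{L^2(\R^{d+m})}^2=\frac{\alpha d}{2(\alpha+2)}\|\phi\|_{L^{\alpha+2}(\R^{d+m})}^{\alpha+2}$. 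Setting $\psi_\omega(x,y):=\omega^{1/\alpha}\phi(\sqrt\omega x,\sqrt\omega y)$, the hypothesis $\omega\ge 1$ guarantees that $\psi_\omega$ is legitimately supported in $\R^d\times\T^m$; a direct change of variables then gives $K(\psi_\omega)=0$, whence $S_\omega(\psi_\omega)=I_\omega(\psi_\omega)$, and each of its three summands scales like $\omega^{1+\frac{2}{\alpha}-\frac{d+m}{2}}$, yielding $\gamma_\omega\le S_\omega(\psi_\omega)\lesssim \omega^{1+\frac{2}{\alpha}-\frac{d+m}{2}}$.

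For the $\omega\to\infty$ half of (ii), fix $u_\omega\in\mathrm{argmin}\,\gamma_\omega$. Since $\alpha\ge\tas=4/d$ makes all three summands in \eqref{def of mI beta} nonnegative, $K(u_\omega)=0$ delivers $\gamma_\omega=I_\omega(u_\omega)\ge\frac{\omega}{2}M(u_\omega)$. Combined with (i), this gives $M(u_\omega)\le 2C\omega^{\frac{2}{\alpha}-\frac{d+m}{2}}$, whose exponent is strictly negative in every case relevant to the lemma (either $\alpha>\tas$, or $m\ge 1$ with $\alpha=\tas$), so $\Gamma_{\sup,\omega}\to 0$.

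For the $\omega\to 0$ half I would invoke Theorem \ref{thm threshold frequency}: when $m\ge 1$ and $\omega<\omega_*$, every minimizer $u_\omega$ is $y$-independent, so $u_\omega(x,y)=v_\omega(x)$ with $v_\omega$ a minimizer of $\widehat\gamma_\omega$; when $m=0$ one already has $\gamma_\omega=\widehat\gamma_\omega$ trivially. In either case, by Lemma \ref{vc implies sc} applied on $\R^d$ (the preliminary $x$-scaling preserves $\widehat M$ in the mass-critical case), $v_\omega$ solves $-\Delta v_\omega+\omega v_\omega=|v_\omega|^\alpha v_\omega$ on $\R^d$. Testing this equation against $\bar v_\omega$, combining with the Pohozaev identity on $\R^d$ and $\widehat K(v_\omega)=0$, and finally invoking $\widehat S_\omega(v_\omega)=\widehat\gamma_\omega$, a short algebraic elimination should yield
\[\widehat M(v_\omega)=\frac{4-\alpha(d-2)}{\alpha\omega}\widehat\gamma_\omega=\frac{4-\alpha(d-2)}{\alpha}\widehat\gamma_1\,\omega^{\frac{2}{\alpha}-\frac{d}{2}},\]
where the last equality uses the $\R^d$-scaling of $\widehat\gamma_\omega$. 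In the intercritical case $\frac{2}{\alpha}-\frac{d}{2}<0$ and $\widehat M(v_\omega)\to\infty$, proving $\Gamma_{\inf,\omega}\to\infty$ (the factor $(2\pi)^m$ is harmless); in the mass-critical case with $m=1$, the exponent vanishes, the classical Weinstein identity $\widehat\gamma_1=\tfrac12\widehat M(Q)$ yields $\widehat M(v_\omega)=\widehat M(Q)$, and multiplication by $2\pi$ gives $M(u_\omega)=2\pi\widehat M(Q)$. The most delicate step, and what I expect to be the main obstacle, is verifying that this computation determines $\widehat M(v_\omega)$ \emph{uniformly} across \emph{all} minimizers: Pohozaev and Nehari together with $\widehat K=0$ leave one free parameter corresponding to the scaling symmetry of the $\R^d$ equation, but the additional equation $\widehat S_\omega(v_\omega)=\widehat\gamma_\omega$ rigidifies the system and pins down the mass. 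This is what legitimises replacing $\Gamma_{\inf,\omega}$ and $\Gamma_{\sup,\omega}$ by a common value and delivers \eqref{4.0}.
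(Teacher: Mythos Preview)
Your proposal is correct and follows essentially the same approach as the paper. The only cosmetic differences are in part (i): the paper splits at $\omega=\omega_*$ (invoking continuity of $\omega\mapsto\gamma_\omega$ for the intermediate range) rather than at $\omega=1$, achieves $K(\phi)=0$ by multiplying by a constant rather than rescaling in $x$, and minimizes $S_\omega(\mathcal{L}_\mu\phi)$ over $\mu$ rather than directly plugging in $\mu=\sqrt\omega$; your version is in fact slightly more streamlined, while the Pohozaev computation in part (ii) matches the paper's almost verbatim.
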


\begin{proof}
We begin with the proof of (i). Assume that $0<\omega< \omega_*$, where $\omega_*$ is the number given by Theorem \ref{thm threshold frequency}. Then using Theorem \ref{thm threshold frequency} and simple scaling arguments we know that
\begin{align}\label{4.1}
\gamma_\omega=(2\pi)^m\widehat \gamma_\omega=(2\pi)^m\omega^{1+\frac2\alpha-\frac{d}{2}}\wm_{1}
\end{align}
holds for $\omega\in(0,\omega_*)$. Hence $\gamma_\omega\leq C_1\omega^{1+\frac{2}{\alpha}-\frac{d+m}{2}}$ for $\omega\in(0,\omega_*)$ with
$C_1:=\widehat\gamma_1\omega_*^{\frac{m}{2}}$.

It remains to consider the case $\omega\geq\omega_*$. As the mapping $\omega\mapsto \gamma_\omega$ is continuous on $(0,\infty)$ (see the proof of Theorem \ref{thm existence ground states 1}), by standard continuity arguments it suffices to consider the case $\omega\gg 1$. We first claim that there exists some $\phi\in C_0^\infty(\R^{d+m})$ supported in $B_1(0):=\{z\in\R^{d+m}:|z|\leq 1\}$ such that $K(\phi)=0$. Here we have identified $\phi$ to a function in $H_{x,y}^1$ by periodically extending $\phi$ along $\T^m$. To do so, we may simply pick an arbitrary function $\phi\in C_0^\infty(\R^{d+m})$ supported in $B_1(0)$. If $K(\phi)=0$ then we are done. Otherwise we may find some $t\in(0,\infty)$ such that $K(t\phi)=0$, and we simply replace $\phi$ to $t\phi$.

Next, define the scaling operator $\mathcal{L}_\mu$ by
\[\mathcal{L}_\mu u(x,y):=\mu^{\frac{2}{\alpha}}u(\mu x,\mu y).\]
Thus if $u$ is supported in $B_1(0)$, then $\mathcal{L}_\mu u$ still defines a function in $H_{x,y}^1$ for any $\mu\geq 1$. Notice particularly that $K(\mathcal{L}_\mu \phi)=0$ for any $\mu>0$. Hence by definition
\[\gamma_\omega\leq S_\omega(\mathcal{L}_\mu\phi)=\frac{1}{2}\omega \mu^{\frac{4}{\alpha}-(d+m)}\|\phi\|_{L^2(\R^{d+m})}^2
+\mu^{\frac{4}{\alpha}-(d+m-2)}\tilde{E}(\phi)=:f_\omega(\mu),\]
where
\[\tilde E(\phi)=\frac{1}{2}\|\nabla_{x,y}\phi\|_{L^2(\R^{d+m})}^2
-\frac{1}{\alpha+2}\|\phi\|_{L^{\alpha+2}(\R^{d+m})}^{\alpha+2}.\]
Arguing as in Corollary \ref{cor lower bound} (but rather using the classical Gagliardo-Nirenberg inequality on $\R^{d+m}$) we know that $K(\phi)=0$ implies $\tilde E(\phi)\in(0,\infty)$. Notice that $\alpha\in[\tas,\tbs)$ implies $\frac{4}{\alpha}-(d+m)<0$ and $\frac{4}{\alpha}-(d+m-2)>0$. Thus minimizing the function $f_\omega(\mu)$ in $\mu$ yields
$$f_\omega(\mu_{\min})=\min_{\mu>0}f_\omega(\mu)\sim \omega^{1+\frac{2}{\alpha}-\frac{d+m}{2}}.$$
Note that to achieve the minimum we need to choose $\mu_{\min}\sim \sqrt{\omega}$. Since we are considering the regime $\omega\gg 1$, by this choice of $\mu$ the function $\mathcal{L}_{\mu_{\min}}\phi$ will still be a candidate in $H_{x,y}^1$. This in turn completes the proof of (i).

Now we prove (ii) and begin with proving $\lim_{\omega\to\infty}\Gamma_{\mathrm{sup},\omega}=0$. Let $u_\omega$ be an arbitrary optimizer of $\gamma_\omega$. Notice that
\[\frac{\omega}{2}M(\omega)\leq
\frac{\omega}{2}M(\omega)+\frac{1}{2}\|\nabla_y u_\omega\|_2^2+
\bg(\frac{1}{2}-\frac{2}{\alpha d}\bg)\|\nabla_x u_\omega\|_2^2
=S_\omega(u_\omega)-\frac{2}{\alpha d}K(u_\omega)=\gamma_\omega.\]
Thus using also (i)
\[M(u_\omega)\leq \frac{2\gamma_\omega}{\omega}\lesssim \omega^{\frac{2}{\alpha}-\frac{d+m}{2}}\to 0\]
as $\omega\to\infty$, since $\frac{2}{\alpha}-\frac{d+m}{2}<0$. As $u_\omega$ is chosen arbitrarily, this proves $\lim_{\omega\to\infty}\Gamma_{\mathrm{sup},\omega}=0$.

Finally we prove \eqref{4.0}. First consider the case $\alpha\in(\tas,\tbs)$. Notice that by Theorem \ref{thm threshold frequency} we have $\nabla_y u_\omega=0$ for all $\omega\ll 1$, which we assume in the rest of the proof. Applying the Pohozaev's identity on $\R^d$ we obtain
\begin{align*}
\omega\|u_\omega\|_2^2+\|\nabla_x u_\omega\|_2^2-\|u\|_{\alpha+2}^{\alpha+2}&=0,\\
\|\nabla_x u_\omega\|_2^2-\frac{\alpha d}{2(\alpha+2)}\|u_\omega\|_{\alpha+2}^{\alpha+2}&=0,
\end{align*}
from which we deduce that
\[\omega M(u_\omega)=\bg(\frac{2(\alpha+2)}{\alpha d}-1\bg)\|\nabla_x u_\omega\|_2^2.\]
Inserting these identities into $\gamma_\omega=S_\omega(u_\omega)$ and using \eqref{4.1} yields
\begin{align}\label{new poho}
\omega^{1+\frac2\alpha-\frac{d}{2}}\wm_{1}=\gamma_\omega=\frac{\alpha\omega}{4-\alpha(d-2)}M(u_\omega).
\end{align}
Thus $M(u_\omega)\sim \omega^{\frac{2}{\alpha}-\frac{d}{2}}$. The desired claim follows by noticing that $\alpha>\frac{4}{d}$ implies $\frac{2}{\alpha}-\frac{d}{2}<0$.

We now consider the case $\alpha=\tas$ and $m=1$. By Theorem \ref{thm threshold frequency} we know that for all $0<\omega\ll 1$ the ground state solutions $u_\omega$ will reduce to the ones on $\R^d$. However, by the classical theories for ground state solutions of the mass-critical NLS on $\R^d$ (see e.g. \cite{weinstein,Kwong_uniqueness}) we know that all such solutions have the fixed mass $\widehat M(Q)$ in the space $\R^d$. This in turn completes the desired proof.
\end{proof}

We are now ready to prove Theorem \ref{thm legendre}.

\begin{proof}[Proof of Theorem \ref{thm legendre}]
That in the case $m=0$ we have $A_c\neq\varnothing$ for any $c\in(0,\infty)$ follows immediately from \eqref{new poho}. In the case $m=1$, that $A_c\neq \varnothing$ for some $c=c_n\in(0,\frac1n)$ follows from $\lim_{\omega\to\infty}\Gamma_{\mathrm{sup},\omega}=0$ deduced in Lemma \ref{lemma 4.1}. This completes the first part of the proof of \ref{thm legendre}.

We next prove \eqref{lengendre identity} for mass numbers $c$ satisfying $A_c\neq\varnothing$. By definition and $A_c\neq\varnothing$ we already know that $m_c\leq \gamma_\omega-\frac{1}{2}c\omega$. Now let $(u_n)_n\subset V(c)$ be a minimizing sequence of $m_c$. For $\omega\in A_c$ we deduce by definition that
\begin{align}\label{abc}
E(u_n)=S_\omega(u_n)-\frac{1}{2}\omega c\geq \gamma_{\omega}-\frac{1}{2}c\omega.
\end{align}
Sending $n\to\infty$ then yields the desired claim.
\end{proof}

\section{Existence of normalized mass-critical ground states: Proof of Theorem \ref{thm threshold mass 2}}\label{sec 4}
As an important application of Theorem \ref{thm legendre}, we establish the existence of normalized ground states for the mass-critical variational problem $m_c$.

\begin{proof}[Proof of Theorem \ref{thm threshold mass 2}]
By Theorem \ref{thm legendre} we already know that for $\omega\in A_c$, an optimizer $u_\omega$ of $\gamma_\omega$ will also be an optimizer of $m_c$. (i) and (ii) then follow immediately. Notice now that if $\nabla_y u=0$, then we necessarily have $M(u)=(2\pi)^m \widehat M(Q)>c$, a contradiction. This completes the proof of (iii) and in turn also the proof of Theorem \ref{thm threshold mass 2}.
\end{proof}

\section{Properties of the mapping $c\mapsto m_c$: Proof of Proposition \ref{prop monotone}}\label{sec 5}
The proof of Proposition \ref{prop monotone} is composed of the following two lemmas.

\begin{lemma}\label{lem 6.1}
Let $\alpha=\tas$ and $m=1$. Then the set
\[V(c)=\{u\in S(c):K(u)=0\}\]
is not empty for any $(0,2\pi \widehat M(Q))$. Moreover, the mapping $c\mapsto m_c$ is monotone decreasing on $(0,2\pi \widehat M(Q))$.
\end{lemma}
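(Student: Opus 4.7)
For $c\in(0,2\pi\widehat M(Q))$ set $a:=c/\widehat M(Q)\in (0,2\pi)$. The plan is to exhibit $f\in H^1(\T)\setminus\{0\}$ with $\|f\|_{L^2(\T)}^2=\|f\|_{L^{\tas+2}(\T)}^{\tas+2}=a$; since the ratio $\|f\|_{L^2(\T)}/\|f\|_{L^{\tas+2}(\T)}$ is a continuous functional on $H^1(\T)\setminus\{0\}$ whose range covers $(0,(2\pi)^{1/(d+2)}]$ (for instance by a scale family of smoothings of indicators of subarcs of $\T$, or by the triangular profile $\rho$ from the proof of Lemma \ref{lemma auxiliary} with free length parameter), such $f$ exists. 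Setting $u(x,y):=f(y)Q(x)$ and invoking the mass-critical Pohozaev identity $\|\nabla_x Q\|_{L^2}^2=\frac{d}{d+2}\|Q\|_{L^{\tas+2}}^{\tas+2}$ (i.e.\ $\widehat K(Q)=0$), direct computation gives
\[
K(u)=\tfrac{d}{d+2}\|Q\|_{L^{\tas+2}}^{\tas+2}\bigl(\|f\|_{L^2(\T)}^2-\|f\|_{L^{\tas+2}(\T)}^{\tas+2}\bigr)=0,\quad M(u)=\|f\|_{L^2(\T)}^2\widehat M(Q)=c,
\]
so $u\in V(c)$.

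\textbf{Part 2: surgery.} Let $c_1<c_2<2\pi\widehat M(Q)$; assume $m_{c_1}<\infty$. Given $\varepsilon>0$, pick $u_1\in V(c_1)$ with $E(u_1)\leq m_{c_1}+\varepsilon$. The key simplification is that on $V(c)$ in the mass-critical regime one has $E(u)=\tfrac12\|\nabla_y u\|_{L^2}^2$, so the task reduces to producing $v\in V(c_2)$ with $\|\nabla_y v\|_{L^2}^2$ close to $\|\nabla_y u_1\|_{L^2}^2$. Fix $\phi\in H^1(\R^d)\setminus\{0\}$ with $\widehat M(\phi)=(c_2-c_1)/(2\pi)-\gamma$ for a small $\gamma>0$ to be calibrated; since $\widehat M(\phi)<\widehat M(Q)$ the sharp Gagliardo-Nirenberg inequality on $\R^d$ forces $\widehat K(\phi)>0$ and $\widehat E(\phi)>0$. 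The $L^2$-preserving dilation $\phi_L(x):=L^{-d/2}\phi(x/L)$ satisfies $\widehat M(\phi_L)=\widehat M(\phi)$ while $\widehat K(\phi_L)=L^{-2}\widehat K(\phi)$ and $\widehat E(\phi_L)=L^{-2}\widehat E(\phi)$ vanish as $L\to\infty$. Viewing $\phi_L$ as a $y$-independent element of $H^1(\R^d\times\T)$ and translating by distance $R$ in the $x$-direction yields $\Phi_{L,R}$; standard disjoint-support additivity then gives, for $R\to\infty$ with $L$ fixed,
\begin{align*}
M(u_1+\Phi_{L,R})&=c_2-2\pi\gamma+o_R(1),\\
K(u_1+\Phi_{L,R})&=2\pi L^{-2}\widehat K(\phi)+o_R(1),\\
E(u_1+\Phi_{L,R})&=E(u_1)+2\pi L^{-2}\widehat E(\phi)+o_R(1),
\end{align*}
while $\|\nabla_y(u_1+\Phi_{L,R})\|_{L^2}^2=\|\nabla_y u_1\|_{L^2}^2$ by $y$-independence of $\Phi_{L,R}$.

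\textbf{Projection and conclusion.} Set $\tilde u:=u_1+\Phi_{L,R}$ and seek $v\in V(c_2)$ of the form $v(x,y):=\tilde u(\mu x,y)$. The constraints $M(v)=c_2$ and $K(v)=0$ become $\mu^d=M(\tilde u)/c_2=:B$ and $\mu^2=\tfrac{d}{d+2}\|\tilde u\|_{L^{\tas+2}}^{\tas+2}/\|\nabla_x\tilde u\|_{L^2}^2=:A$; thanks to the mass-critical identity $d\cdot\tas/2=2$ the compatibility condition collapses to the single scalar equation $A=B^{2/d}$. Since $K(\tilde u)>0$ forces $A<1$, one needs $B<1$, arranged by $\gamma>0$; matching $A=B^{2/d}$ at leading order yields the explicit calibration $\gamma=\gamma(L)=c_2 d L^{-2}\widehat K(\phi)/(2\|\nabla_x u_1\|_{L^2}^2)+o_L(1)$, which is positive and $O(L^{-2})$, hence solvable for large $L$. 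Then $\mu=B^{1/d}=1-O(L^{-2})$ and
\[
E(v)=\tfrac12\mu^{-d}\|\nabla_y u_1\|_{L^2}^2=E(u_1)/B=E(u_1)+O(L^{-2}).
\]
Hence $v\in V(c_2)$ satisfies $m_{c_2}\leq E(v)\leq m_{c_1}+\varepsilon+o_L(1)$; sending $L\to\infty$ and then $\varepsilon\to 0$ yields $m_{c_2}\leq m_{c_1}$. The main delicacy is the mass-critical scaling degeneracy $d\cdot\tas/2=2$: it prevents any rigid rescaling of $u_1$ alone from changing mass while preserving $K=0$, so the argument relies crucially on the added surgery piece $\Phi_{L,R}$ being $y$-independent (contributing nothing to $\tfrac12\|\nabla_y\cdot\|_{L^2}^2$) and on calibrating $\gamma(L)$ so that the single scalar compatibility $A=B^{2/d}$ is solvable with $\mu$ close to $1$.
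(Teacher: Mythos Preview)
Your Part 1 is correct and more direct than the paper's: instead of invoking the ground states $u_\omega$ from Lemma \ref{lemma 4.1} and performing a cut-and-paste surgery, you build an explicit tensor product $u(x,y)=f(y)Q(x)$, where $f\in H^1(\T)$ is chosen with $\|f\|_{L^2(\T)}^2=\|f\|_{L^{\tas+2}(\T)}^{\tas+2}=c/\widehat M(Q)$. The existence of such $f$ for every $a\in(0,2\pi)$ follows, as you indicate, from a one-parameter family interpolating between small-support profiles and the constant function. This is a genuine simplification over the paper's construction.

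Part 2, however, has a real gap in the projection step. You correctly identify that the single rescaling $v(x,y)=\tilde u(\mu x,y)$ imposes two constraints $M(v)=c_2$ and $K(v)=0$ with only one free parameter $\mu$, leading to the compatibility condition $A=B^{2/d}$. But your ``calibration'' $\gamma=\gamma(L)$ is only a leading-order heuristic: it does not establish that a solution exists. There is also circularity, since $\widehat K(\phi)$ on the right-hand side of your formula depends on $\gamma$ through $\widehat M(\phi)=(c_2-c_1)/(2\pi)-\gamma$. Finally, the $o_R(1)$ errors from the approximate disjointness are never matched against the exact constraint. To make this rigorous you would need, at minimum, a continuity/intermediate-value argument in $\gamma$ (or an equivalent parameter) with $L,R$ fixed large and $u_1$ truncated to compact support; as written, no such argument is present.

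The paper sidesteps this difficulty entirely by decoupling the two constraints: it first perturbs $u_b$ to $tu_b$ with $t>1$ close to $1$ so that $K$ becomes strictly negative (and truncates in $x$), then adds a $y$-independent piece $s^{d/2}\phi(s\cdot)$ with disjoint support. The amplitude parameter $s$ of $\phi$ is used to match the mass exactly, and the $L^2$-preserving dilation parameter (called $q$ in the paper) is used to match $K$ exactly, since $K(q^{d/2}\phi(q\cdot))=q^2 K(\phi)$ ranges continuously over $(0,\infty)$. No compatibility condition arises because two independent parameters hit two constraints. If you want to salvage your approach, you should either supply the missing IVT argument for $A=B^{2/d}$, or adopt the paper's two-parameter scheme.
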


\begin{remark}
We point out that Lemma \ref{lem 6.1} will indeed hold for other combinations of $\alpha,d,m$ satisfying \eqref{add} (where the range for $c$ is replaced by $(0,\infty)$). We will focus here on the mass-critical case where the standard scaling arguments are not available. For a proof of the other combinations where the model is mass-supercritical, we refer e.g. to \cite[Lem. 2.4]{Luo_inter}.
\end{remark}

\begin{proof}
We first prove that the set $V(c)$ is not empty. By Lemma \ref{lemma 4.1} we can find some $\omega>0$ possibly large such that the optimizer $u_\omega$ of $\gamma_\omega$ satisfies $M(u_\omega)<c$. Now take some $t>1$ possibly close to $1$ such that
\[M(t u_\omega)<c\quad\text{and}\quad K(tu_\omega)<0,\]
where the latter inequality is valid for all $t>1$ since $K(u_\omega)=0$. Next, let $\eta\in C_c^\infty(\R^d;[0,1])$ be a radially decreasing cut-off function such that $\eta\equiv 1$ in $B_1(0)$ and $\eta\equiv 0$ in $B_2^c(0)$. We then choose $R\gg 1$ such that
\[M(\eta(\cdot/R) t u_\omega)<c\quad\text{and}\quad K(\eta(\cdot/R) t u_\omega)<0.\]
Finally, let $\phi\in C_c^\infty(\R^d)$ such that $M(\phi)=c-M(\eta(\cdot/R) t u_\omega)$. Using the Gagliardo-Nirenberg inequality on $\R^d$ we know that $K(\phi)>0$. By noticing $K(s^{\frac{d}{2}} \phi(s\cdot))=s^2 K(\phi)$ we may tune the value $s\in(0,\infty)$ such that
\[K(s^{\frac{d}{2}} \phi(s\cdot))+K(\eta(\cdot/R) t u_\omega)=0.\]
Fixing such $s$ and set $u:=s^{\frac{d}{2}} \phi(s\cdot)+\eta_R t u_\omega$. Since the two summands are compactly supported in the $x$-space, by further translating them along $x$-direction we conclude that $M(u)=c$ and $K(u)=0$ (where we also used $M(s^{\frac{d}{2}} \phi(s\cdot))=M(\phi)$) and in turn that $u\in V(c)$, as desired.

We now prove the monotonicity of the mapping $c\mapsto m_c$. For $2\pi \widehat M(Q)>a>b>0$ we need to show $m_{a}<m_b$. Let $\vare>0$ be some small number to be chosen later. Since $V(b)$ is not empty, we can find some $u_b\in V(b)$ such that $E(u_b)+\frac{\vare}{2}\geq E(u_b)$. Next, let $\phi\in C_c^\infty(\R^d)$ satisfy $\widehat M(\phi)=(2\pi)^{-1}(a-b)<\widehat M(Q)$. Since $\widehat M(\phi)<\widehat M(Q)$, using again the Gagliardo-Nirenberg inequality on $\R^d$ we infer that $K(\phi)>0$. For $t>1$ we know that $K(t u_b)<0$. Choose now $t=t(\vare)>1$ such that there exists $\beta=\beta(\vare)>0$ such that
\begin{align}\label{6.1}
m_b+\vare\geq E(tu_b),\quad b+\vare\geq M(tu_b)>b
,\quad  K(tu_b)\in(-2\beta,-\beta).
\end{align}
We also choose $R=R(\vare)\gg 1$ such that
\begin{gather}\label{6.2}
\begin{array}{cc}
E(tu_b)+\vare\geq E(\eta(\cdot/R)tu_b)
,\quad
M(tu_b)\geq M(\eta(\cdot/R)tu_b)>b,\quad
K(\eta(\cdot/R)tu_b)\in(-3\beta,-\frac{\beta}{2}).
\end{array}
\end{gather}
Defining $u_1:=\eta(\cdot/R)tu_b$, \eqref{6.1} and \eqref{6.2} imply that
\begin{align}\label{6.3}
m_1+2\vare\geq E(u_1),\quad b+\vare\geq M(u_1), \quad K(u_1)\in(-3\beta,-\frac{\beta}{2}).
\end{align}
Next, choose $s=s(\vare)\in(0,1)$ such that $M(s\phi)+M(u_1)=a$. Define
\[\phi^q(x):=q^{\frac{d}{2}}s\phi(qx).\]
By direct computation we see that $K(\phi^q)=q^2 K(s\phi)$. Since $s<1$, we also know that $K(s\phi)>0$. Combining with $K(u_1)<0$, we may tune the value $q$ in a way such that $K(\phi^q)+ K(u_1)=0$. Define now $u_2:=\phi^q$. Since $u_1$ and $u_2$ have compact supports, up to an $x$-spatial translation we may w.l.o.g. assume that $u_1$ and $u_2$ have disjoint supports. Finally, set $u_3:=u_1+u_2$. Combining with $M(\phi^q)=M(s\phi)$ and the fact that $\phi^q$ is $y$-independent we conclude that
\begin{align*}
M(u_3)=a,\quad K(u_3)=0
\end{align*}
and
\begin{align*}
E(u_3)=E(u_3)-\frac{1}{2}K(u_3)=\frac{1}{2}\|\nabla_y u_3\|_2^2=\|\nabla_y u_1\|_2^2=E(u_1)\leq m_b+2\vare.
\end{align*}
By definition of $m_a$ we obtain
\[m_a\leq E(u_3)\leq m_b+2\vare.\]
The desired claim follows by choosing $\vare$ arbitrarily small.
\end{proof}

\begin{lemma}\label{lem 6.2}
The mapping $c\mapsto m_c$ is lower semicontinuous on $(0,2\pi \widehat M(Q))$.
\end{lemma}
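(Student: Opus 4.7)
The plan is to invoke Lemma \ref{lem 6.1} to reduce to the non-trivial case $c_n \downarrow c$ (the complementary case $c_n \uparrow c$ yields $m_{c_n} \geq m_c$ by monotonicity and is immediate), take near-minimizers $u_n \in V(c_n)$ with $E(u_n) \leq m_{c_n} + 1/n$, and show via a profile decomposition that their limiting energy is still at least $m_c$. Using the mass-critical scaling freedom from Theorem \ref{thm threshold mass 2}(ii), I may normalize so that $\|\nabla_x u_n\|_2^2 = 1$; this combined with $K(u_n) = 0$ gives $\|u_n\|_{\alpha+2}^{\alpha+2} = (\alpha+2)/2$, and the identity $E(u_n) = \tfrac12\|\nabla_y u_n\|_2^2 \leq m_c + o(1)$ (a consequence of $K(u_n)=0$ together with $\alpha = \tas$) yields a uniform $H_{x,y}^1$-bound for $(u_n)$.

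A linear profile decomposition for $H^1(\R^d \times \T)$ in the mass-critical setting (in the spirit of \cite{Luo_Waveguide_MassCritical}) then extracts two families of profiles: compact profiles $\phi^j \in H^1(\R^d \times \T)$, and large-scale $x$-concentration bubbles $\psi^k \in H^1(\R^d)$, which are $y$-independent in the limit. Asymptotic orthogonality delivers
\[
\sum_j M(\phi^j) + (2\pi)\sum_k \widehat M(\psi^k) \leq c,\qquad \sum_j K(\phi^j) + \sum_k \widehat K(\psi^k) = o(1),
\]
together with $\liminf_{n\to\infty} E(u_n) \geq \sum_j E(\phi^j) + (2\pi)\sum_k \widehat E(\psi^k)$. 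Since $c < 2\pi\widehat M(Q)$, each $\R^d$-bubble satisfies $\widehat M(\psi^k) < \widehat M(Q)$, so by the sharp Weinstein--Gagliardo--Nirenberg inequality it has $\widehat K(\psi^k) > 0$ and $\widehat E(\psi^k) \geq 0$. For each compact profile $\phi^j$, I would adapt the bubble-addition construction from the proof of Lemma \ref{lem 6.1} to project $\phi^j$ onto $V(M(\phi^j))$ at the cost of a small energy increment that vanishes with the corresponding $K$-defect, obtaining $E(\phi^j) \geq m_{M(\phi^j)}$. The chain
\[
\liminf_{n\to\infty} m_{c_n} \;\geq\; \sum_j m_{M(\phi^j)} \;\geq\; m_{\sum_j M(\phi^j)} \;\geq\; m_c,
\]
valid by the monotonicity and the trivial subadditivity $m_{a+b} \leq m_a + m_b$ (itself a consequence of Lemma \ref{lem 6.1}), then yields the desired conclusion.

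The main obstacle is the surgical projection step in the mass-critical regime: since $L^2$-preserving $x$-scaling only multiplies $K$ by a positive factor (cf. the proof of Theorem \ref{thm existence ground states 1}), one cannot enforce $K(\phi^j) = 0$ by rescaling alone and must instead employ a quantitative refinement of the bubble-addition technique from Lemma \ref{lem 6.1} to build a competitor in $V(M(\phi^j))$ whose energy exceeds $E(\phi^j)$ by a controllable amount. This surgery, together with justifying the full profile decomposition and the asymptotic $K$-orthogonality in the mass-critical setting on $\R^d \times \T$, is where the main technical input from \cite{Luo_Waveguide_MassCritical} enters.
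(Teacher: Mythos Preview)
Your approach works in principle but is far more elaborate than needed, and the obstacle you flag---the surgical projection of each compact profile $\phi^j$ onto $V(M(\phi^j))$ with a vanishing energy defect---is a genuine difficulty that you have not resolved. The paper sidesteps it entirely.

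The paper's proof is a one-profile argument. After the same normalization ($\|\nabla_x u_n\|_2^2 = \tfrac12\|u_n\|_{\tas+2}^{\tas+2} = 1$, so $(u_n)$ is bounded in $H^1_{x,y}$ and $\liminf\|u_n\|_{\tas+2}>0$), one invokes Lemma~\ref{lemma non vanishing limit} to extract, after $x$-translation, a single nonzero weak limit $u$. Set $\bar c := M(u)\leq c$. The Brezis--Lieb/dichotomy argument from the proof of Theorem~\ref{thm existence ground states 1} (cases $K(u)>0$ and $K(u)<0$) then shows that $u$ is in fact an optimizer of $m_{\bar c}$, in particular $K(u)=0$ and $E(u)=m_{\bar c}$. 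The mass-critical identity $E(\cdot)=I(\cdot)=\tfrac12\|\nabla_y\cdot\|_2^2$ on $\{K=0\}$ now reduces the conclusion to weak lower semicontinuity of a single norm:
\[
m_c \;\leq\; m_{\bar c} \;=\; \tfrac12\|\nabla_y u\|_2^2 \;\leq\; \liminf_{n\to\infty}\tfrac12\|\nabla_y u_n\|_2^2 \;=\; \liminf_{n\to\infty} m_{c_n}.
\]

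The point you missed is that there is no need to account for \emph{all} the mass or energy via a full profile decomposition. A single nonzero weak limit is already a competitor for $m_{\bar c}$ at some $\bar c\leq c$, and monotonicity (Lemma~\ref{lem 6.1}) absorbs the gap $\bar c\leq c$. This eliminates the large-scale bubbles, the multi-profile bookkeeping, the subadditivity step, and---crucially---the projection surgery you identified as the main obstruction.
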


\begin{proof}
Since $c\mapsto m_c$ is monotone decreasing, it suffices to prove the right continuity of $c\mapsto m_c$. Let thus $c_n\searrow c$. Let also $u_n\in V(c_n)$ such that $E(u_n)\leq m_{c_n}+n^{-1}$. Since $K(u_n)=0$, by rescaling we may assume that $\|\nabla_x u_n\|_2^2=\frac12\|u_n\|_{\tas+2}^{\tas+2}=1$. Hence $(u_n)_n$ is a bounded sequence in $H_{x,y}^1$ and using Lemma \ref{lemma non vanishing limit} we know that (up to translations) $u_n$ has a weak $u\neq 0$ in $H_{x,y}^1$. Particularly,
$$\bar c:=M(u)\leq \liminf_{n\to\infty} M(u_n)=\liminf_{n\to\infty} c_n=c.$$
Arguing as in the proof of Theorem \ref{thm existence ground states 1} we know that $u$ is an optimizer of $m_{\bar c}$. Combining with the monotonicity of $c\mapsto m_c$ and the weak lower semicontinuity of norms we conclude that
\[m_c\leq m_{\bar c}=\frac12\|\nabla_y u\|_2^2\leq \frac12\liminf_{n\to\infty}
\|\nabla_y u_n\|_2^2=\liminf_{n\to\infty} m_{c_n},\]
as desired.
\end{proof}

\begin{proof}[Proof of Proposition \ref{prop monotone}]
This follows immediately from Lemma \ref{lem 6.1} and \ref{lem 6.2}.
\end{proof}

\section{Scattering versus blow-up: Proof of Theorem \ref{thm large data} and \ref{thm blow up}} \label{sec 6}
In this section we prove Theorem \ref{thm large data} and \ref{thm blow up}. Throughout this section, we fix $\alpha=\tas$ and denote by $(\mathbf{a},\mathbf{r})$ an admissible pair. For $d=2$, we fix $(\mathbf{a},\mathbf{r})$ to be $(4,4)$.

\subsection{Some preliminaries}
To begin with, we firstly collect some useful lemmas.

\begin{lemma}[Small data well-posedness, \cite{TzvetkovVisciglia2016}]\label{lemma cnls well posedness} Let $I$ be an open interval containing $0$. Define
\begin{align*}
X(I)&:=(L_t^\infty L_x^2 {H}_y^1(I)\cap L_t^{2^+} L_x^{(\tbs+2)^-} {H}_y^1(I))\cap (L_t^\infty \dot{H}_x^1 L_y^2(I)
\cap L_t^{2^+} \dot{W}_x^{1,(\tbs+2)^-} L_y^2(I))\\
&=:S_0 {H}_y^1(I)\cap S_1 L_y^2(I).
\end{align*}
Let also $s\in(\frac12,1]$. Assume that
\begin{align*}
\|u_0\|_{H_{x,y}^1}\leq A
\end{align*}
for some $A>0$. Then there exists $\delta=\delta(A)$ such that if
\begin{align*}
\|e^{it\Delta}u_0\|_{\diag  H_y^{s}(I) }\leq \delta,
\end{align*}
then there exists a unique solution $u\in X(I)$ of \eqref{nls} with $u(0)=u_0$ such that
\begin{align*}
\|u\|_{X(I)}\lesssim A\quad\text{and}\quad
\|u\|_{\diag  H_y^{s}(I)}\leq 2\|e^{it\Delta}u_0\|_{\diag  H_y^{s}(I)}.
\end{align*}
\end{lemma}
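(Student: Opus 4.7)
The plan is to construct $u$ by a Picard iteration applied to the Duhamel operator
$$\Phi(u)(t) := e^{it\Delta_{x,y}} u_0 + i\int_0^t e^{i(t-s)\Delta_{x,y}}(|u|^{\tas} u)(s)\,ds$$
on the closed ball $\mathcal{B} := \{u : \|u\|_{X(I)} \leq 2C_0 A,\ \|u\|_{\diag H_y^s(I)} \leq 2\delta\}$, endowed with the $\diag H_y^s(I)$-metric. The first ingredient is the standard set of Strichartz estimates for the Schr\"odinger semigroup on $\R^d\times\T^m$, derivable by expanding along the $y$-Fourier basis $\{e^{ik\cdot y}\}_{k\in\Z^m}$ and invoking the $\R^d$-Strichartz estimates modewise. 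This yields $\|e^{it\Delta_{x,y}} u_0\|_{X(I)} \lesssim \|u_0\|_{H_{x,y}^1}$, together with the dual inhomogeneous bound $\|\int_0^t e^{i(t-s)\Delta_{x,y}}F(s)\,ds\|_{X(I)} \lesssim \|F\|_{N(I)}$, where $N(I)$ denotes the dual Strichartz space adapted to both the mass-level $H_y^1$-component and the energy-level $L_y^2$-component of $X(I)$.

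The crux of the argument is the nonlinear estimate
$$\| |u|^{\tas} u \|_{N(I)} \lesssim \|u\|_{\diag H_y^s(I)}^{\tas} \, \|u\|_{X(I)},$$
obtained via a fractional Leibniz rule that deposits the $H^1$-regularity onto a single distinguished factor (bounded by $\|u\|_{X(I)}$) while the remaining $\tas$ factors are placed in $L_y^\infty$ through the Sobolev embedding $H_y^s \hookrightarrow L_y^\infty$ valid precisely because $s > 1/2$. In the $(t,x)$ variables the exponents are balanced by H\"older's inequality, the mass-critical choice $\tas = 4/d$ together with the admissibility $\frac{2}{\mathbf{a}} + \frac{d}{\mathbf{r}} = \frac{d}{2}$ ensuring that the factors sum to the correct dual Strichartz pairing.

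With these two ingredients the fixed-point argument closes routinely. Choosing $\delta = \delta(A)$ small enough that $C'(2\delta)^{\tas}(2C_0 A) \leq \min\{C_0 A,\delta\}$, the two bounds imply that $\Phi$ maps $\mathcal{B}$ into itself, while the analogous bilinear version of the nonlinear estimate applied to $|u|^{\tas}u - |v|^{\tas}v$ (which is standard since $\tas \geq 1$ for all admissible $d$) yields contractivity in the $\diag H_y^s(I)$-metric; uniqueness in the larger class $X(I)$ follows by a further bootstrap using the same bilinear estimate. The main technical obstacle is the nonlinear estimate: the auxiliary norm $\diag H_y^s$ must be finely tuned to separate the $H^1$-regularity (carried by $X(I)$) from the $L_y^\infty$-type control supplied by $H_y^s$ with $s>1/2$, and its precise compatibility with the dual Strichartz norm $N(I)$ is exactly where the mixed geometry of $\R^d\times\T^m$ enters the estimate in a non-trivial way.
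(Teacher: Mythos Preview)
The paper does not supply its own proof of this lemma; it is quoted verbatim from Tzvetkov--Visciglia \cite{TzvetkovVisciglia2016} and used as a black box. Your sketch is precisely the argument carried out in that reference: Strichartz estimates on $\R^d\times\T^m$ obtained by Fourier expansion in $y$ and reduction to the $\R^d$ theory, a nonlinear estimate that deposits the full Sobolev weight on one factor while the remaining $\tas$ factors are placed in $L_y^\infty$ via the embedding $H_y^s\hookrightarrow L_y^\infty$ (which is exactly why the hypothesis $s>\tfrac12$ appears), and a contraction on a ball in $X(I)$ with respect to the weaker $\diag H_y^s$-metric. So your approach matches the cited proof.

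One small inaccuracy: your parenthetical claim that the difference estimate for $|u|^{\tas}u-|v|^{\tas}v$ is ``standard since $\tas\geq 1$ for all admissible $d$'' is not literally true, as $\tas=4/d<1$ once $d\geq 5$. The difference estimate still holds in that range because the nonlinearity is H\"older continuous of order $\tas+1>1$, but the argument is not a Leibniz-type expansion and requires a slightly different bookkeeping. In the paper's principal application ($d=2$, where $\tas=2$) the point is moot.
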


\begin{lemma}[Scattering criterion, \cite{Luo_Waveguide_MassCritical}]\label{scattering crit}
If $u$ is a global solution of \eqref{nls} and there exists some $s\in(\frac12,1]$ such that
\begin{align*}
\|u\|_{\diag  H_y^{s}(\R)}+\|u\|_{L_t^\infty H_{x,y}^1(\R)}<\infty,
\end{align*}
then $u$ is scattering in $H_{x,y}^1$.
\end{lemma}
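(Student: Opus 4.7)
The plan is to prove the scattering criterion via the standard Duhamel-plus-Strichartz argument on the waveguide $\R^d\times\T^m$, working in the spaces $X(I)$ introduced in Lemma \ref{lemma cnls well posedness}. I will proceed in three steps.

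\textbf{Step 1: Partition.} Set $A:=\|u\|_{L_t^\infty H_{x,y}^1(\R)}$ and $B:=\|u\|_{L_t^{\ba}L_x^{\br}H_y^s(\R)}$, both assumed finite. Since $B<\infty$, I partition $\R$ into finitely many intervals $I_1,\ldots,I_N$ such that $\|u\|_{L_t^{\ba}L_x^{\br}H_y^s(I_j)}<\eta$ for each $j$, where $\eta=\eta(A)$ is to be chosen small. The number $N$ depends only on $B$ and $\eta$.

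\textbf{Step 2: Upgrade to full Strichartz bounds.} On each $I_j$ I combine the Duhamel formula $u(t)=e^{i(t-t_j)\Delta_{x,y}}u(t_j)+i\int_{t_j}^t e^{i(t-\tau)\Delta_{x,y}}(|u|^{\alpha}u)(\tau)\,d\tau$ with the Strichartz inequality on $\R^d\times\T^m$ and the fact that $H_y^s$ is an algebra for $s>1/2$. A standard bootstrap, analogous to the small-data argument in Lemma \ref{lemma cnls well posedness}, yields $\|u\|_{X(I_j)}\lesssim A$, provided $\eta$ is small enough relative to $A$. Summing over $j$, I obtain a global bound $\|u\|_{X(\R)}<\infty$, since the Strichartz norms concatenate across the finite partition with only a constant loss depending on $N$.

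\textbf{Step 3: Cauchy in $H^1_{x,y}$.} For $t_1<t_2$, Duhamel gives
\begin{equation*}
e^{-it_2\Delta_{x,y}}u(t_2)-e^{-it_1\Delta_{x,y}}u(t_1)=i\int_{t_1}^{t_2}e^{-i\tau\Delta_{x,y}}(|u|^{\alpha}u)(\tau)\,d\tau.
\end{equation*}
Applying the dual inhomogeneous Strichartz estimate and using the algebra property of $H_y^s$ together with a H\"older split in $(t,x)$ to distribute one $x$-derivative onto a factor carrying the $L_t^\infty\dot H_x^1 L_y^2$ norm, I obtain an estimate of the schematic form
\begin{equation*}
\|e^{-it_2\Delta_{x,y}}u(t_2)-e^{-it_1\Delta_{x,y}}u(t_1)\|_{H_{x,y}^1}\lesssim \|u\|_{L_t^{\ba}L_x^{\br}H_y^s([t_1,t_2])}^{\alpha}\,\|u\|_{X([t_1,t_2])}.
\end{equation*}
The first factor vanishes as $t_1,t_2\to+\infty$ because $B<\infty$, while the second is bounded globally by Step 2. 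Hence $e^{-it\Delta_{x,y}}u(t)$ is Cauchy in $H_{x,y}^1$ and converges to some $\phi^+$, giving scattering forward in time. The same argument with $t_1,t_2\to-\infty$ yields $\phi^-$.

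\textbf{Main obstacle.} The delicate point is the nonlinear estimate in Step 3: the scattering norm only controls a partial regularity $H_y^s$ with $s\in(\tfrac12,1]$, while the target space is the full $H^1_{x,y}$. One must therefore transfer one $x$- or $y$-derivative onto a factor estimated in the energy norm $L_t^\infty H_{x,y}^1$, trading off against the $L_t^{\ba}L_x^{\br}H_y^s$ norm on the remaining factors through the algebra property of $H_y^s$ and H\"older in the $(t,x)$ variables. For the mass-critical case $\alpha=\tas$ with $d=2$ and $(\ba,\br)=(4,4)$, the exponents are scaling-critical so this split is essentially forced; one must verify that the resulting pair $(\ba',\br')$ matches a dual Strichartz pair and that no derivatives are lost. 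Once the exponent bookkeeping is carried out, the Cauchy argument and the existence of $\phi^{\pm}$ follow routinely.
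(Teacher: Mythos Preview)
The paper does not supply a proof of this lemma; it simply cites \cite{Luo_Waveguide_MassCritical} and moves on. Your three-step Duhamel/Strichartz argument is the standard proof of such scattering criteria and is correct in outline, including the identification of the main technical point (passing from the $H_y^s$ scattering norm to $H_{x,y}^1$ convergence by placing the derivative on the factor controlled in $L_t^\infty H_{x,y}^1$). There is nothing to compare against in the present paper.
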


\begin{lemma}[Long time stability, \cite{CubicR2T1Scattering}]\label{lem stability cnls}
Let $d=2$. Let also $u$ be a solution of \eqref{nls} on the time interval $I\ni 0$ and let $z$ be a solution of
\begin{align*}
i\pt_t z+\Delta_{x,y} z=-|z|^2 z+e
\end{align*}
on $I$. Let also $s\in(\frac12,1]$ be given. Assume that there exist $M,L,M'>0$ such that
\begin{align*}
\|u\|_{L_t^\infty L_x^2 H_y^{s}(I)}&\leq M,\\
\|z\|_{L_{t,x}^4 H_y^{s} (I)}&\leq L,\\
\|z(0)-u(0)\|_{L_x^2 H_y^{s}}&\leq M'.
\end{align*}
Assume also the smallness conditions
\begin{alignat*}{2}
\|e^{it\Delta}(z(0)-u(0) )&\|_{L_{t,x}^4 H_y^{s}(I)}&&\leq \vare,\\
\|e&\|_{L_{t,x}^{\frac{4}{3}}H_y^{s}(I)}&&\leq \vare
\end{alignat*}
for some $0<\vare\leq \vare_1$, where $\vare_0=\vare_0(M,M',L)>0$ is a small constant. Then
\begin{align*}
\|z-u\|_{L_{t,x}^4 H_y^{s}(I)}&\leq C(M,M',L)\vare,\\
\|z-u\|_{S_0H_y^{s}(I)}&\leq C(M,M',L)M',\\
\|z\|_{S_0H_y^{s}(I)}&\leq C(M,M',L).
\end{align*}
\end{lemma}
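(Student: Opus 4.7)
The plan is to follow the classical long-time stability scheme for mass-critical NLS, adapted to the waveguide setting $\mathbb{R}^2\times\mathbb{T}$ via the algebra property of $H_y^s$ for $s>\tfrac12$. Set $w:=u-z$; then $w$ satisfies
\[
i\partial_t w+\Delta_{x,y}w=-\bigl(|u|^2 u-|z|^2 z\bigr)-e,\qquad w(0)=u(0)-z(0).
\]
Pointwise $\bigl||u|^2 u-|z|^2 z\bigr|\lesssim |w|(|z|^2+|w|^2)$, and combining the algebra property of $H_y^s$ with H\"older in $(t,x)$ yields, on any subinterval $J\subset I$,
\[
\bigl\||u|^2u-|z|^2z\bigr\|_{L^{4/3}_{t,x}H_y^s(J)}\lesssim \bigl(\|z\|_{L^4_{t,x}H_y^s(J)}^2+\|w\|_{L^4_{t,x}H_y^s(J)}^2\bigr)\|w\|_{L^4_{t,x}H_y^s(J)}.
\]

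Using the bound $\|z\|_{L^4_{t,x}H_y^s(I)}\leq L$, I would partition $I$ into $N=N(L,\eta)$ consecutive subintervals $I_j=[t_j,t_{j+1}]$ on which $\|z\|_{L^4_{t,x}H_y^s(I_j)}\leq \eta$, with $\eta$ a small absolute constant fixed by the Strichartz constant. Duhamel's formula together with the Strichartz estimate for the admissible pair $(4,4)$ on $\mathbb{R}^2$ gives, on each $I_j$,
\[
\|w\|_{L^4_{t,x}H_y^s(I_j)}\leq C\bigl\|e^{i(t-t_j)\Delta}w(t_j)\bigr\|_{L^4_{t,x}H_y^s(I_j)}+C\varepsilon+C\eta^2\|w\|_{L^4_{t,x}H_y^s(I_j)}+C\|w\|_{L^4_{t,x}H_y^s(I_j)}^3.
\]
Choosing $\eta$ so that $C\eta^2\leq \tfrac12$ and running a standard continuity/bootstrap argument, one obtains
\[
\|w\|_{L^4_{t,x}H_y^s(I_j)}\leq 2C\bigl\|e^{i(t-t_j)\Delta}w(t_j)\bigr\|_{L^4_{t,x}H_y^s(I_j)}+2C\varepsilon,
\]
provided the right-hand side stays in the smallness regime where the cubic term is dominated by the linear one.

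To propagate the estimate across the partition, I would bound the free evolution from $t_j$ in terms of the free evolution from $0$ plus the accumulated Duhamel error:
\[
\bigl\|e^{i(t-t_j)\Delta}w(t_j)\bigr\|_{L^4_{t,x}H_y^s(I_j)}\leq C\bigl\|e^{it\Delta}w(0)\bigr\|_{L^4_{t,x}H_y^s(I)}+C\sum_{k<j}\Bigl(\varepsilon+\eta^2\|w\|_{L^4_{t,x}H_y^s(I_k)}+\|w\|_{L^4_{t,x}H_y^s(I_k)}^3\Bigr).
\]
Iterating over $j=0,1,\dots,N-1$ yields $\|w\|_{L^4_{t,x}H_y^s(I)}\leq C(M,M',L)\varepsilon$, provided $\varepsilon\leq \varepsilon_0=\varepsilon_0(M,M',L)$ is fixed so that each bootstrap closes. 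Feeding this $L^4_{t,x}H_y^s$ control back into Strichartz on $I$, and using the initial-data bound $\|w(0)\|_{L^2_x H_y^s}\leq M'$ to dominate the homogeneous contribution, produces the $S_0 H_y^s$ estimates for both $w$ and $z$.

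The principal difficulty is the quantitative bookkeeping: the constant produced by the $N$-fold iteration grows geometrically in $N$, while $N$ itself depends on $L$. Consequently $\varepsilon_0$ must be prescribed as a function of $(M,M',L,N)$ so that the cubic self-interaction $\|w\|^3$ never destabilises the bootstrap on any subinterval, and so that the cumulative contribution of $e$ and of $w(0)$ to later subintervals stays in the smallness regime throughout. The a priori bound $\|u\|_{L^\infty_t L^2_x H_y^s}\leq M$ plays only a secondary role, used to control $w$ qualitatively in $H_y^s$ during the iteration; the quantitative decay is driven entirely by the $L^4_{t,x}H_y^s$ control of $z$ together with the smallness of $e$ and of $e^{it\Delta}w(0)$.
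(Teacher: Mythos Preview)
The paper does not supply its own proof of this lemma: it is quoted from \cite{CubicR2T1Scattering} as a preliminary tool (see Section~6.1), so there is no argument in the paper to compare against. Your outline is the standard long-time stability scheme---Strichartz, subdivision of $I$ according to the $L^4_{t,x}H_y^s$ norm of $z$, bootstrap on each subinterval via the algebra property of $H_y^s$ for $s>\tfrac12$, then iteration---and this is precisely how the result is proved in the cited reference.
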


\begin{lemma}[Linear profile decomposition for bounded $H_{x,y}^1$-sequence]\label{linear profile}
Let $(\psi_n)_n$ be a bounded sequence in $H_{x,y}^1$. Then up to a subsequence, there exist nonzero linear profiles
$(\tdu^j)_j\subset L_x^2 H_y^1$, remainders $(w_n^k)_{k,n}\subset H_{x,y}^1$, parameters $(t^j_n,x^j_n,\xi^j_n,\ld^j_n)_{j,n}\subset\R\times\R^d\times\R^d\times(0,\infty)$ and $K^*\in\N\cup\{\infty\}$, such that
\begin{itemize}
\item[(i)] For any finite $1\leq j\leq K^*$ the parameters satisfy
\begin{align*}
1&\gtrsim_j\lim_{n\to\infty}|\xi_n^j|,\nonumber\\
\lim_{n\to\infty}t^j_n&=:t_\infty^j\in\{0,\pm\infty\},\nonumber\\
\lim_{n\to\infty}\ld^j_n&=:\ld_\infty^j\in\{1,\infty\},\nonumber\\
t_n^j&\equiv 0\quad\text{if $t_\infty^j=0$},\nonumber\\
\ld_n^j&\equiv 1\quad\text{if $\ld_\infty^j=1$},\nonumber\\
\xi_n^j&\equiv 0\quad\text{if $\ld_\infty^j=1$}.
\end{align*}

\item[(ii)]For any finite $1\leq k\leq K^*$ we have the decomposition
\begin{align*}
\psi_n=\sum_{j=1}^k T^j_nP_n^j \tdu^j+w_n^k.
\end{align*}
Here, the operators $T_n^j$ and $P_n^j$ are defined by
\begin{align*}
T^j_n u(x):=
\left\{
             \begin{array}{ll}
             [e^{it^j_n\Delta_x}u](x-x^j_n,y),&\text{if $\ld^j_\infty=1$},\\
             \\
             g_{\xi^j_n,x^j_n,\ld^j_n}[e^{it^j_n\Delta_x}u](x,y),&\text{if $\ld^j_\infty=\infty$}
             \end{array}
\right.
\end{align*}
and
\begin{align*}
P^j_n u:=
\left\{
             \begin{array}{ll}
             u,&\text{if $\ld^j_\infty=1$},\\
             \\
             P_{\leq(\ld_n^j)^\theta}u,&\text{if $\ld^j_\infty=\infty$}
             \end{array}
\right.
\end{align*}
for some $\theta\in(0,1)$. Moreover,
\begin{align*}
\tdu^j\in
\left\{
             \begin{array}{ll}
             H_{x,y}^1,&\text{if $\ld^j_\infty=1$},\\
             \\
             L_x^2 H_y^1,&\text{if $\ld^j_\infty=\infty$}.
             \end{array}
\right.
\end{align*}

\item[(iii)] For $s\in(\frac12,1)$ the remainders $(w_n^k)_{k,n}$ satisfy
\begin{align*}
\lim_{k\to K^*}\lim_{n\to\infty}\|e^{it\Delta_{x,y}}w_n^k\|_{\diag H_y^s(\R)}=0.
\end{align*}

\item[(iv)] The parameters are orthogonal in the sense that
\begin{align*}
 \frac{\ld_n^k}{\ld_n^j}+ \frac{\ld_n^j}{\ld_n^k}+\ld_n^k|\xi_n^j-\xi_n^k|+\bg|t_k\bg(\frac{\ld_n^k}{\ld_n^j}\bg)^2-t_n^j\bg|
+\bg|\frac{x_n^j-x_n^k-2t_n^k(\ld_n^k)^2(\xi_n^j-\xi_n^k)}{\ld_n^k}\bg|\to\infty
\end{align*}
for any $j\neq k$.

\item[(v)] For any finite $1\leq k\leq K^*$ and $D\in\{1,\nabla_x,\nabla_y\}$ we have the energy decompositions
\begin{align*}
\|D\psi_n\|_{2}^2&=\sum_{j=1}^k\|D(T_n^jP_n^j\tdu^j)\|_{2}^2+\| Dw_n^k\|_{2}^2+o_n(1),\\
\|\psi_n\|_{\tas+2}^{\tas+2}&=\sum_{j=1}^k\|T_n^jP_n^j\tdu^j\|_{\tas+2}^{\tas+2}+\|w_n^k\|_{\tas+2}^{\tas+2}+o_n(1)
\end{align*}
\end{itemize}
\end{lemma}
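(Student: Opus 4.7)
The plan is to adapt the standard inverse Strichartz / profile extraction scheme in the form developed by Hani--Pausader and subsequently refined for cubic NLS on $\R^2\times\T$ in \cite{CubicR2T1Scattering,Luo_Waveguide_MassCritical}. The starting point is a refined Strichartz-type inequality: if $(\psi_n)_n$ is bounded in $H_{x,y}^1$ and $\limsup_n\|e^{it\Delta_{x,y}}\psi_n\|_{L_t^{\mathbf a}L_x^{\mathbf r}H_y^s(\R)}=:\eta>0$, then one can extract frequency-localized pieces on which the norm $\eta$ is attained up to a multiplicative constant. Such an inequality is obtained by combining a Littlewood-Paley decomposition in $x$ with a Bernstein estimate in $y$ (using the $H_y^s$-regularity with $s\in(\frac12,1)$) and the usual bilinear refinement of the Strichartz inequality on $\R^d$.

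First I would run the standard iterative extraction. At each stage, I look at the remainder $w_n^{k-1}$; if its free evolution already has small $L_t^{\mathbf a}L_x^{\mathbf r}H_y^s$-norm the process terminates, otherwise the refined Strichartz inequality furnishes parameters $(t_n^k,x_n^k,\xi_n^k,\ld_n^k)$ and a nonzero weak limit. The dichotomy $\ld_\infty^k\in\{1,\infty\}$ corresponds to the two geometric regimes anticipated in the introduction: $\ld_\infty^k=1$ yields a profile living on the full waveguide $\R^d\times\T$, while $\ld_\infty^k=\infty$ represents a large-scale bubble which, after rescaling, lives on the flat space $\R^{d+1}$ (but only its low $y$-frequencies survive, hence the projection $P_{\leq(\ld_n^k)^\theta}$ and the $L_x^2H_y^1$ target space). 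The role of $P_n^j$ is precisely to reconcile the mismatched periodic and Euclidean geometries in the large-scale case: a Galilean boost in $\xi$ together with the $x$-rescaling pulls the torus direction to large scales, and the frequency truncation guarantees that the resulting profile can be interpreted on $\R\times\R_y$ with vanishing error as $n\to\infty$.

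Next I would verify the orthogonality (iv). This is done by contradiction: if two parameter tuples violate the orthogonality, a diagonal subsequence argument shows that the $k$-th extracted profile would already be captured by the $j$-th one, contradicting the maximality in the extraction step. The orthogonality in turn yields the asymptotic decoupling (v) of the $L^2$-, $\dot H^1_x$-, $\dot H^1_y$- and $L^{\tas+2}$-norms by a standard change of variables computation, making use of dominated convergence for cross terms together with the operator convergence $P_n^j\to\id$ strongly (in the rescaled coordinates). Finally, (iii) follows by monitoring the $L_t^{\mathbf a}L_x^{\mathbf r}H_y^s$-norm of $w_n^k$ along the extraction: the refined Strichartz inequality forces it to decrease by a definite fraction at each step, and uniform $H_{x,y}^1$-boundedness caps the total number of non-trivial profiles, yielding the required vanishing as $k\to K^*$.

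The main obstacle is the large-scale regime $\ld_\infty^j=\infty$. Here one has to show that the free waveguide evolution of a concentrating bump is well-approximated by the free Euclidean evolution on $\R^{d+1}$ once the $y$-frequencies are truncated at level $(\ld_n^j)^\theta\ll \ld_n^j$; this requires careful Bernstein-type estimates in the $y$-variable, a Poisson summation argument to pass between $\T$ and $\R$, and a quantitative control on the discrepancy that decays as $\ld_n^j\to\infty$. A secondary difficulty is that the target space $L_x^2H_y^1$ for the large-scale profiles is weaker than $H_{x,y}^1$, so the weak limits must be taken in this weaker topology, and one must still recover the strong energy decoupling (v) after integrating against the smooth frequency cutoff $P_n^j$. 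Once these points are handled, the remaining arguments are routine adaptations of the Euclidean profile decomposition and follow the template in \cite{CubicR2T1Scattering,Luo_Waveguide_MassCritical}.
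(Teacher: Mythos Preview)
The paper does not actually prove this lemma: it is stated in the ``preliminaries'' subsection as one of several results collected without proof, alongside the small-data theory, scattering criterion, stability lemma, and large-scale approximation, all of which are cited from \cite{TzvetkovVisciglia2016,CubicR2T1Scattering,Luo_Waveguide_MassCritical,similar_cubic}. Your outlined approach---refined Strichartz inequality, iterative inverse-Strichartz extraction, the dichotomy $\lambda_\infty^j\in\{1,\infty\}$ with the frequency truncation $P_{\leq(\lambda_n^j)^\theta}$ handling the large-scale regime, and orthogonality/decoupling by standard arguments---is precisely the scheme developed in those references, so there is no discrepancy to report.
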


\begin{lemma}[Large scale approximation, \cite{CubicR2T1Scattering,Luo_Waveguide_MassCritical,similar_cubic}]\label{cnls lem large scale proxy}
Fix $d=2$. Let $(\ld_n)_n\subset(0,\infty)$ be given such that $\ld_n\to \infty$, $(t_n)_n\subset\R$ be given such that either $t_n\equiv 0$ or $t_n\to\pm\infty$ and $(x_n,\xi_n)_n\subset\R^2\times\R^2$ be given such that $(\xi_n)_n$ is bounded. Let $\phi\in L_{x}^2 H_y^1$ and define
$$\phi_n:=g_{\xi_n,x_n,\ld_n}e^{it_n\Delta}P_{\leq \ld_n^\theta}\tdu$$
for some $\theta\in(0,1)$. Assume also that $\mM(\phi)<\pi \widehat M(Q)$. Then for all sufficiently large $n$ the solution $u_n$ of \eqref{nls} with $u_n(0)=\phi_n$ is global and scattering in time with
\begin{align}
\limsup_{n\to\infty}\|u_n\|_{L_{t,x}^4 H_y^1(\R)}&\leq C(\|\phi\|_{L_x^2 H_y^1}).\label{L2 proxy 1}
\end{align}
Furthermore, for every $\beta>0$ there exists $N_\beta\in\N$ and $\psi_\beta\in C_c^\infty(\R\times \R^2)\otimes C_{\mathrm{per}}^\infty(\T)$ such that
\begin{align}
\bg\|u_n-\ld_n^{-1}e^{-it|\xi_n|^2}e^{i\xi_n\cdot x}\psi_\beta\bg(\frac{t}{\ld_n^2}+t_n,\frac{x-x_n-2t\xi_n}{\ld_n},y\bg)\bg\|_{L_{t,x}^4 H_y^{1}(\R)}\leq \beta\label{L2 proxy 3}
\end{align}
for all $n\geq N_\beta$.
\end{lemma}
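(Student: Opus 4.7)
The plan is the standard large-scale $\to$ resonant-system reduction developed for waveguides in \cite{HaniPausader} and carried out for $\R^2\times\T$ in \cite{CubicR2T1Scattering,Luo_Waveguide_MassCritical}. First I would use the Galilean invariance together with the spatial and time translation symmetries of \eqref{nls} to replace $u_n$ by the rescaled function $v_n(t,x,y):=\ld_n (G_{x_n,\xi_n}u_n)(\ld_n^2 t,\ld_n x,y)$, where $G_{x_n,\xi_n}$ undoes the shift by $x_n$ and the Galilean boost by $\xi_n$. A direct computation shows that $v_n$ solves
\[
(i\pt_t+\Delta_x+\ld_n^2\Delta_y)v_n=-|v_n|^2 v_n\quad\text{on }\R^2\times\T,
\]
with initial data $v_n(-t_n)$ converging strongly to $\tdu$ in $L_x^2 H_y^1$ (after removing the harmless frequency truncation $P_{\leq \ld_n^\theta}$). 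The enlarged coefficient $\ld_n^2$ on $\Delta_y$ encodes the large-scale nature of the bubble.

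Next I would identify the limit equation by Fourier-expanding $v_n=\sum_k v_{n,k}(t,x)e^{iky}$, conjugating the fast phase via $w_{n,k}:=e^{it\ld_n^2 k^2}v_{n,k}$, and observing that the oscillatory factors $e^{it\ld_n^2(k_1^2-k_2^2+k_3^2-k^2)}$ appearing in the cubic nonlinearity force, as $\ld_n\to\infty$, a projection onto the resonant set $\{k_1-k_2+k_3=k,\ k_1^2-k_2^2+k_3^2=k^2\}$. For $m=1$ this set reduces to $\{k_1=k\}\cup\{k_3=k\}$, so the limit is the resonant cubic system
\[
(i\pt_t+\Delta_x)w_k=-\big(2\|w\|_{\ell^2}^2-|w_k|^2\big)w_k,
\]
which is precisely the system analyzed in \cite{CubicR2T1Scattering,similar_cubic}. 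Under the hypothesis $\mM(\phi)<\pi\widehat M(Q)$, the scattering theory there produces a global solution $w\in L^4_{t,x}H_y^1(\R)$ with norm bounded only in terms of $\|\tdu\|_{L_x^2 H_y^1}$. Regularizing $w$ in $(t,x)$, cutting off in physical space and truncating to finitely many $y$-Fourier modes yields, for every $\beta>0$, the smooth approximant $\psi_\beta\in C_c^\infty(\R\times\R^2)\otimes C^\infty_{\mathrm{per}}(\T)$ required in \eqref{L2 proxy 3}, with $\|w-\psi_\beta\|_{L^4_{t,x}H_y^1}<\beta/2$.

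Finally I would feed this approximation into Lemma \ref{lem stability cnls}. Undoing the phase conjugation, the function built from $\psi_\beta$ solves the rescaled equation up to an error $e_n$ collecting the non-resonant cubic interactions; integration by parts in $t$ (i.e.\ stationary phase in the large parameter $\ld_n^2$) shows $\|e_n\|_{L^{4/3}_{t,x}H_y^1}\to 0$ as $\ld_n\to\infty$. Lemma \ref{lem stability cnls} then gives global existence and the claimed $L^4_{t,x}H_y^1$-bound for $v_n$ together with the approximation property, and reversing the rescaling from Step 1 produces \eqref{L2 proxy 1} and \eqref{L2 proxy 3}. The main technical obstacle will be this quantitative control of $\|e_n\|_{L^{4/3}_{t,x}H_y^1}$: the non-resonant sum is finite thanks to the $y$-frequency truncation built into $\psi_\beta$, but each summand still needs an explicit rate of decay in $\ld_n$ obtained by combining stationary phase with the smoothness and compact $(t,x)$-support of $\psi_\beta$, and one must verify that this rate is uniform in the mass parameter $\|\tdu\|_{L_x^2H_y^1}$ so that the smallness threshold $\vare_1(M,M',L)$ of the stability lemma is eventually met.
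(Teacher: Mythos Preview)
The paper does not actually supply a proof of this lemma: it is stated with citations to \cite{CubicR2T1Scattering,Luo_Waveguide_MassCritical,similar_cubic} and used as a black box in the Palais--Smale argument. Your outline follows exactly the architecture of those references --- Galilean/scaling reduction to a slowly-varying-in-$y$ equation, Fourier expansion in $y$ and extraction of the resonant cubic system on $\R^2$, invocation of the large-data scattering result for that system under the mass threshold $M(\phi)<\pi\widehat M(Q)$, and closure via the stability lemma --- so there is nothing to compare against in the present paper, and your proposal is consistent with the intended proof.

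One small remark on the ``main technical obstacle'' you flag: because $\psi_\beta$ has only finitely many $y$-modes and is compactly supported and smooth in $(t,x)$, the non-resonant error is a \emph{finite} sum of terms of the form $e^{it\ld_n^2\kappa}F(t,x)$ with $\kappa\neq 0$ and $F\in C_c^\infty$, so a single integration by parts in $t$ already yields the $O(\ld_n^{-2})$ decay in $L^{4/3}_{t,x}H_y^1$; the rate depends only on $\psi_\beta$ (hence on $\beta$ and $\|\phi\|_{L_x^2H_y^1}$), which is exactly the uniformity needed to beat the threshold $\vare_1(M,M',L)$ of Lemma~\ref{lem stability cnls}. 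So the obstacle you anticipate is real but not deep.
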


\subsection{Variational analysis and the MEI-functional}\label{subsec: r2t1 variational}
For $c\in(0,2\pi\widehat M(Q))$ and $\nu\in(0,m_c)$, define
\begin{align}\label{7.26}
\mathcal{A}_{c,\nu}:=\{u\in H_{x,y}^1:M(u)< c,\, E(u)<m_c-\nu,\,K(u)>0\}.
\end{align}
As will be seen later, we will always fix chosen $c$ and $\nu$ to manipulate our analysis. For this reason we simply drop the subscripts $c$ and $\nu$ from the set $\mathcal{A}_{c,\nu}$ in the rest of the paper.

We next establish the following coerciveness result for elements from the set $\mathcal{A}$.

\begin{lemma}\label{lemma coercivity}
Let $u\in \mathcal{A}$. Then there exists some $\delta\in(0,\frac12)$ such that
\[\left(\frac12-\delta\right)\|\nabla_{x,y} u\|_2^2\leq E(u)\leq \frac12\|\nabla_{x,y}u\|_2^2.\]
\end{lemma}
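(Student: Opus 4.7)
The upper bound $E(u)\leq \tfrac{1}{2}\|\nabla_{x,y}u\|_2^2$ is immediate from the non-negativity of the nonlinear term $\frac{1}{\alpha+2}\|u\|_{\alpha+2}^{\alpha+2}$ in the definition of $E$. For the lower bound I would begin with the mass-critical identity
\begin{align*}
E(u)=\tfrac{1}{2}K(u)+\tfrac{1}{2}\|\nabla_y u\|_2^2,
\end{align*}
valid since $\alpha d=4$ (cf.\ \eqref{1.4+}), which together with $K(u)>0$ on $\mathcal{A}$ yields the a priori bound $\|\nabla_y u\|_2^2\leq 2E(u)<2(m_c-\nu)$.

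The plan is then to upgrade this to the full coerciveness by uniformly bounding the quantity $\rho(u):=\widehat M(Q)^{-\alpha/2}\|u\|_{L_y^\infty L_x^2}^{\alpha}$ on $\mathcal{A}$ by some $\rho_0\in(0,1)$. I would apply the sharp mass-critical Gagliardo--Nirenberg inequality on $\mathbb{R}^d$ pointwise in $y$ and integrate over $\mathbb{T}$, obtaining
\begin{align*}
\tfrac{2}{\alpha+2}\|u\|_{\alpha+2}^{\alpha+2}\leq \rho(u)\,\|\nabla_x u\|_2^2,\qquad K(u)\geq (1-\rho(u))\|\nabla_x u\|_2^2.
\end{align*}
Once $\rho(u)\leq \rho_0<1$ is in hand, reinserting the second estimate into the identity for $E$ immediately produces $E(u)\geq \tfrac{1-\rho_0}{2}\|\nabla_{x,y}u\|_2^2$ and the claim follows with $\delta=\rho_0/2$. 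A first attempt to control $\rho(u)$ would apply the one-dimensional Sobolev inequality to $f(y):=\|u(\cdot,y)\|_{L_x^2}^2$ on $\mathbb{T}$, giving
\begin{align*}
\|u\|_{L_y^\infty L_x^2}^2\leq \frac{M(u)}{2\pi}+2\sqrt{M(u)}\,\|\partial_y u\|_2,
\end{align*}
whose leading term is strictly less than $\widehat M(Q)$ thanks to $M(u)<c<2\pi\widehat M(Q)$.

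The hard part will be securing the strict inequality $\rho_0<1$, since the cross term involving $\|\partial_y u\|_2$ need not a priori be small enough to force the total below $\widehat M(Q)$. I would close this gap by a contradiction argument: supposing a sequence $(u_n)\subset\mathcal{A}$ with $\rho(u_n)\to 1$, I would use the $H^1_{x,y}$-boundedness of $(u_n)$ inherited from $M(u_n)<c$ and the a priori bound on $\|\nabla_y u_n\|_2$ together with Lemma \ref{lemma non vanishing limit} to extract, after $x$-translation, a non-trivial weak limit $u_\infty\in H^1_{x,y}$ that inherits the asymptotic saturation of the sharp Gagliardo--Nirenberg inequality. Such a profile is forced to be $y$-independent with its $L_x^2$-mass equal to $\widehat M(Q)$, so that its total waveguide mass equals $2\pi\widehat M(Q)$---in contradiction with $M(u_\infty)\leq \liminf_n M(u_n)\leq c<2\pi\widehat M(Q)$. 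This rules out $\rho(u_n)\to 1$, secures the uniform $\rho_0<1$, and delivers the desired $\delta=\delta(c,\nu)\in(0,1/2)$.
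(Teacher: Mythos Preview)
Your approach is genuinely different from the paper's and contains two gaps that I do not see how to close as written.

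First, the claimed $H^1_{x,y}$-boundedness of the contradicting sequence $(u_n)$ does \emph{not} follow from $M(u_n)<c$ and $\|\partial_y u_n\|_2^2<2(m_c-\nu)$ alone: nothing here controls $\|\nabla_x u_n\|_2$. The paper handles this by exploiting the mass-critical scaling $u\mapsto t^{d/2}u(t\cdot,y)$ to normalize $\|\nabla_x u_n\|_2=1$; your quantity $\rho(u)$ is indeed invariant under this rescaling, so you could in principle do the same, but you have not said so, and without it Lemma~\ref{lemma non vanishing limit} does not apply. Even after rescaling you would still need $\liminf_n\|u_n\|_{\alpha+2}>0$, which is not a consequence of $\rho(u_n)\to 1$.

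Second, and more seriously, the assertion that the weak limit $u_\infty$ ``inherits the asymptotic saturation of the sharp Gagliardo--Nirenberg inequality'' and is therefore ``$y$-independent with its $L_x^2$-mass equal to $\widehat M(Q)$'' is unjustified. The condition $\rho(u_\infty)=1$ says only that $\sup_y\|u_\infty(\cdot,y)\|_{L_x^2}^2=\widehat M(Q)$; it does not force $y$-independence, nor does it force equality in the pointwise Gagliardo--Nirenberg inequality. You are conflating $\rho(u)=1$ with saturation of your integrated estimate $\tfrac{2}{\alpha+2}\|u\|_{\alpha+2}^{\alpha+2}\leq\rho(u)\|\nabla_x u\|_2^2$, which are different statements. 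Moreover it is not even clear that $\sup_{\mathcal A}\rho(u)\leq 1$; simple product examples $u(x,y)=\lambda Q(\lambda x)g(y)$ with $g$ oscillating suggest that $\rho(u)>1$ can coexist with $K(u)>0$ and $M(u)<c$, so your contradiction hypothesis ``$\rho(u_n)\to 1$'' may not capture the actual failure mode.

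The paper's argument avoids $\rho$ entirely. It works with the scale-invariant ratio $\delta_u:=\tfrac{2}{\alpha+2}\|u\|_{\alpha+2}^{\alpha+2}/\|\nabla_x u\|_2^2\in(0,1)$, which is \emph{automatically} below $1$ by $K(u)>0$. Assuming $\delta_{u_n}\to 1$, one rescales to $\|\nabla_x u_n\|_2=1$, extracts a nonzero weak limit $u$, and then shows $K(u)=0$ by a Brezis--Lieb splitting argument (ruling out both $K(u)>0$ and $K(u)<0$). The contradiction comes not from Gagliardo--Nirenberg rigidity but from the variational definition of $m_c$: since $M(u)\leq c$ and $K(u)=0$, monotonicity gives $m_c\leq m_{M(u)}\leq \tfrac12\|\partial_y u\|_2^2\leq\liminf_n\tfrac12\|\partial_y u_n\|_2^2\leq m_c-\nu$. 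This is the missing mechanism in your sketch.
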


\begin{proof}
The upper bound is trivial. Next, notice that $K(u)>0$ implies that there exists some $\delta_u\in(0,1)$ such that \[\frac{1}{\tas+2}\|u\|_{\tas+2}^{\tas+2}= \frac{\delta_u}{2}\|\nabla_x u\|_2^2.\]
It suffices to show $\sup_{u\in\mathcal{A}}\delta_u<1$. Thus assume that there exists $(u_n)_n\subset \mathcal{A}$ and $(\delta_n)_n=(\delta_{u_n})_n$ such that $\delta_n\to 1$. We exploit the scaling operator $t\mapsto u^t(x,y):=t^{\frac{d}{2}}u(tx,y)$ to tune the sequence $(u_n)_n$ such that $\|\nabla_x u_n^{t_n}\|_2^2=1$. Since $\|\pt_y u_n\|_2^2$ and $M(u_n)$ are invariant w.r.t. the scaling operator, using $E(u_n)<m_c$ and $K(u_n)>0$ we know that
\[\|\pt_y u_n^{t_n}\|_2^2=\|\pt_y u_n\|_2^2<2m_c<\infty\]
and $M(u_n^{t_n})=M(u_n)<c$. Thus $(u_n^{t_n})_n$ is a bounded sequence in $H_{x,y}^1$. Moreover, using $\delta_n\to 1$ and $\frac{1}{\tas+2}\|u_n^{t_n}\|_{\tas+2}^{\tas+2}= \frac{\delta_n}{2}\|\nabla_x u_n^{t_n}\|_2^2=\frac{\delta_n}{2}$ we infer that $\liminf_{n\to\infty}\|u_n^{t_n}\|_{\tas+2}^{\tas+2}>0$. Thus $(u_n^{t_n})_n$ weakly converges to some $u\in H_{x,y}^1\setminus\{0\}$ in $H_{x,y}^1$ with $M(u)\leq c$. We next claim that $K(u)=0$.
\begin{itemize}
\item Suppose first that $K(u)<0$. Then there exists some $t\in(0,1)$ such that $K(tu)=0$. However, using the monotonicity of $c\mapsto m_c$ and the weakly lower semicontinuity of norms we obtain that
\[m_c\leq m_{M(tu)}\leq \frac{t^2}{2}\|\pt_y u\|_2^2<\frac12\|\pt_y u\|_2^2\leq \liminf_{n\to\infty}\bg(\frac12\|\pt_y u_n\|_2^2\bg)\leq m_c-\nu,\]
a contradiction.

\item Suppose now that $K(u)>0$. By noticing $K(u_n^{t_n})=o_n(1)$ we may argue as in the proof of Theorem \ref{thm existence ground states 1} to infer that $K(s_nu_n^{t_n}-u)<0$, $M(s_nu_n^{t_n}-u)<c$ and $\|\pt_y(s_nu_n^{t_n}-u)\|_2^2\leq 2m_c-\|\pt_y u\|_2^2+o_n(1)$ for all $n\gg 1$. But then we reach to the previous contradiction again.
\end{itemize} 
Thus $K(u)=0$. Consequently,
\[m_c\leq m_{M(u)}=\frac12 \|\pt_y u\|_2^2\leq\liminf_{n\to\infty}\bg(\frac12\|\pt_y u_n\|_2^2\bg)\leq m_c-\nu,\]
which is again a contradiction. The proof is therefore complete.
\end{proof}

The NLS-flow also leaves the elements in $\mA$ invariant, as shown in the following lemma.

\begin{lemma}\label{lem 7.8}
Let $u$ be a solution of \eqref{nls}. If $u(0)\in\mA$, then $u(t)\in\mA$ for all $t$ lying in the maximal lifespan of $u$.
\end{lemma}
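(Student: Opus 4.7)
The plan is to exploit conservation of the mass $M$ and energy $E$ along the NLS flow, combined with the monotonicity of $c\mapsto m_c$ (Proposition \ref{prop monotone}), so that the only non-trivial property to propagate is the sign of $K$.

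First I would observe that mass and energy conservation immediately give $M(u(t))=M(u(0))<c$ and $E(u(t))=E(u(0))<m_c-\nu$ on the entire maximal lifespan of $u$. Hence it only remains to show $K(u(t))>0$ for every $t$ in that lifespan. The set $\mathcal{A}$ is open in $H_{x,y}^1$ and the flow $t\mapsto u(t)$ is continuous into $H_{x,y}^1$, so, arguing by contradiction, if $u$ ever leaves $\mathcal{A}$ there must exist a first time $t_*$ in the lifespan with $K(u(t_*))=0$. The strategy is to rule this out by comparing $E(u(t_*))$ with $m_{M(u(t_*))}$.

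At such a hypothetical $t_*$, mass conservation plus $K(u(0))>0$ forces $u(t_*)\neq 0$ (otherwise $M(u(0))=0$ and $u(0)\equiv 0$, contradicting $K(u(0))>0$). Consequently $u(t_*)\in V(M(u(t_*)))$ with $M(u(t_*))=M(u(0))\in(0,c)\subset(0,2\pi\widehat M(Q))$, and hence by definition of $m_{M(u(t_*))}$ we get
\[
E(u(t_*))\;\geq\; m_{M(u(t_*))}.
\]
Since $c\mapsto m_c$ is monotone decreasing on $(0,2\pi\widehat M(Q))$ by Proposition \ref{prop monotone}, and $M(u(t_*))<c$, we conclude $m_{M(u(t_*))}\geq m_c$. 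Combining with energy conservation yields the chain
\[
m_c-\nu \;>\; E(u(0))\;=\;E(u(t_*))\;\geq\; m_{M(u(t_*))}\;\geq\; m_c,
\]
which contradicts $\nu>0$.

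The main (and essentially only) subtle step is justifying the existence of the first crossing time $t_*$ and ensuring $u(t_*)\neq 0$; both rely on continuity of $t\mapsto u(t)$ in $H_{x,y}^1$ (standard on the maximal lifespan from the local well-posedness theory cited via Lemma \ref{lemma cnls well posedness}) and on mass conservation. Once this is in place, the contradiction is a direct consequence of the variational definition of $m_c$ together with the monotonicity result, so no further estimates are needed.
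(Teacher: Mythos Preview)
Your proposal is correct and follows essentially the same argument as the paper: use conservation of mass and energy to reduce to the sign of $K$, then use continuity to find a first time $t_*$ with $K(u(t_*))=0$, and derive a contradiction from the definition of $m_{M(u(t_*))}$ together with the monotonicity of $c\mapsto m_c$ from Proposition~\ref{prop monotone}. Your write-up is in fact slightly more careful than the paper's, since you explicitly verify $u(t_*)\neq 0$ and spell out the inequality $m_{M(u(t_*))}\geq m_c$ (the paper writes an equality there, which is a minor slip).
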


\begin{proof}
Assume that this is not the case. By conservation of mass and energy this can only happen if there is some $t$ with $K(u(t))\leq 0$. By continuity we can then find some $t_0$ lying between $0$ and $t$ such that $K(u(t_0))=0$. By Proposition \ref{prop monotone} we also know that $E(u(t_0))<m_c= m_{M(u(t_0))}$, which however contradicts the minimality of $m_{M(u(t_0))}$.
\end{proof}

\begin{remark}
In view of Lemma \ref{lem 7.8}, we simply write $u\in\mA$ for a solution $u$ of \eqref{nls} if there is some $t$ in the lifespan of $u$ with $u(t)\in\mA$.
\end{remark}

We now construct the mass-energy-indicator (MEI) functional $\mathcal{D}$. Let $\Omega:=[0,c)\times [0,m_c-\nu)$. We then define the MEI-functional $\mathcal{D}:\Omega\to(0,\infty)$ by
\[\mathcal{D}(a,b)=\frac{a}{c-a}+\frac{b}{m_c-\nu-b}.\]
For $u\in H_{x,y}^1$, we define $\mathcal{D}(u)$ as $\mathcal{D}(u):=\mathcal{D}(M(u),E(u))$. Notice that by the conservation laws of NLS, if $u$ is a solution of \eqref{nls}, then $\mathcal{D}(u)$ will also be a conserved quantity. In this case, we may simply write $\mathcal{D}(u(t))\equiv \mathcal{D}(u)$ for any $t$ lying in the lifespan of $u$.

We next establish some properties of the MEI-functional $\mathcal{D}$.

\begin{lemma}\label{cnls killip visan curve}
Let $u,u_1,u_2$ be functions in $H_{x,y}^1$. The following statements hold true:
\begin{itemize}
\item[(i)] If $u\in\mA$ then $\mD(u)\in(0,\infty)$.

\item[(ii)] Let $u_1,u_2\in \mA$ satisfy $\mM(u_1)\leq \mM(u_2)$ and $\mH(u_1)\leq \mH(u_2)$, then $\mD(u_1)\leq \mD(u_2)$. If in addition either $\mM(u_1)<\mM(u_2)$ or $\mH(u_1)<\mH(u_2)$, then $\mD(u_1)<\mD(u_2)$.

\item[(iii)] For all $u\in \mA$ it holds $\|u\|^2_{H_{x,y}^1}\leq \bg(\frac12-\delta\bg)^{-1}(m_c+c-\nu)\mD(u)$.
\end{itemize}
\end{lemma}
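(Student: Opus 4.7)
All three claims are direct consequences of the definition of $\mathcal{D}$ together with the coercivity estimate of Lemma \ref{lemma coercivity}, and I would establish them in order.

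For (i), the first task is to check that $(M(u), E(u)) \in \Omega$ whenever $u \in \mathcal{A}$, so that $\mathcal{D}(u)$ is well-defined and finite. The conditions $M(u) \in [0,c)$ and $E(u) < m_c - \nu$ are built into the definition of $\mathcal{A}$ in \eqref{7.26}, while the missing lower bound $E(u) \ge 0$ is immediate from Lemma \ref{lemma coercivity} since $K(u) > 0$ forces $E(u) \ge (\tfrac12 - \delta)\|\nabla_{x,y} u\|_2^2 \ge 0$. Positivity $\mathcal{D}(u) > 0$ is then automatic upon noting that $u \ne 0$ (as $K(0) = 0$ would preclude $u = 0$ from lying in $\mathcal{A}$), whence $M(u) > 0$ and the first summand of $\mathcal{D}$ is already strictly positive.

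For (ii), the key observation is that both single-variable maps $a \mapsto a/(c - a)$ on $[0,c)$ and $b \mapsto b/(m_c - \nu - b)$ on $[0, m_c - \nu)$ are strictly increasing. Applying this coordinate-wise gives $\mathcal{D}(u_1) \le \mathcal{D}(u_2)$ under the assumed inequalities, and strictness of either coordinate transfers to strictness of the sum.

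For (iii), I would combine coercivity with an elementary inversion: solving the identity $\mathcal{D}_1(u) := M(u)/(c - M(u))$ for $M(u)$ yields $M(u) = c\mathcal{D}_1(u)/(1 + \mathcal{D}_1(u)) \le c\mathcal{D}(u)$, and analogously $E(u) \le (m_c - \nu)\mathcal{D}(u)$. Lemma \ref{lemma coercivity} then gives $\|\nabla_{x,y} u\|_2^2 \le (\tfrac12 - \delta)^{-1} E(u) \le (\tfrac12 - \delta)^{-1}(m_c - \nu)\mathcal{D}(u)$, and using $(\tfrac12 - \delta)^{-1} \ge 2 > 1$ one absorbs $M(u) \le c\mathcal{D}(u)$ into the same constant to conclude $\|u\|_{H^1_{x,y}}^2 = M(u) + \|\nabla_{x,y} u\|_2^2 \le (\tfrac12 - \delta)^{-1}(c + m_c - \nu)\mathcal{D}(u)$. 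No genuine obstacle is anticipated; this lemma merely packages $\mathcal{D}$ as a monotone, coercive bookkeeping functional that tracks the position of $u$ inside the sub-threshold region $\mathcal{A}$, and each claim is a short algebraic manipulation on top of Lemma \ref{lemma coercivity}.
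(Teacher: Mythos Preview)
Your proposal is correct and follows essentially the same route as the paper: (i) and (ii) are read off directly from the definition of $\mathcal{D}$ together with $E(u)\in[0,m_c-\nu)$ (the lower bound coming from Lemma~\ref{lemma coercivity}), and (iii) is obtained by inverting $\mathcal{D}\ge M(u)/(c-M(u))$ and $\mathcal{D}\ge E(u)/(m_c-\nu-E(u))$ to get $M(u)\le c\,\mathcal{D}(u)$ and $E(u)\le(m_c-\nu)\mathcal{D}(u)$, then feeding the latter into the coercivity bound. The only cosmetic difference is that you spell out the absorption of the mass term via $(\tfrac12-\delta)^{-1}>1$, which the paper leaves implicit.
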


\begin{proof}
(i) and (ii) follow immediately from the definition of the MEI-functional and the fact that $E(u)\in (0,m_c-\nu)$ for $u\in\mA$. For (iii), first estimate $M(u)$ by
\[\mD(u)=\frac{M(u)}{c-M(u)}+\frac{E(u)}{m_c-\nu-E(u)}\geq \frac{M(u)}{c-M(u)},\]
from which we infer that $M(u)\leq c(1+\mD(u))^{-1}\mD(u)\leq c\mD(u)$. Similarly we have $E(u)\leq (m_c-\nu)\mD(u)$. The desired claim follows by also taking Lemma \ref{lemma coercivity} into account.
\end{proof}

\subsection{Existence of a minimal blow-up solution}
Throughout this section the dimension number $d$ will be set to $d=2$.

For $c\in(0,\pi\widehat M(Q))$ and $\nu\in(0,m_c)$ let $\mA=\mA_{c,\nu}$ be the set defined through \eqref{7.26}. Thanks to the monotonicity and lower semicontinuity of the mapping $c\mapsto m_c$ deduced in Proposition \ref{prop monotone}, the scattering result given in Theorem \ref{thm large data} will follow as long as for each given $c\in(0,\pi\widehat M(Q))$ and $\nu\in(0,m_c)$ we can prove the same statement for all initial data lying in $\mA$. This will be the main task in the rest of the paper.

Define now
\begin{align*}
\tau(\mD_0):=\sup\bg\{\|u\|_{\diag H_y^{s}(I_{\max})}:
\text{ $u$ is solution of \eqref{nls}, $u\in\mA$, $\mD(u)\in (0,\mD_0)$}\bg\}
\end{align*}
and
\begin{align}\label{introductive hypothesis}
\mD^*&:=\sup\{\mD_0>0:\tau(\mD_0)<\infty\}.
\end{align}
By Lemma \ref{lemma cnls well posedness}, \ref{scattering crit}, \ref{lemma coercivity} and \ref{cnls killip visan curve} we know that $\mD^*>0$ and $\tau(\mD_0)<\infty$ for sufficiently small $\mD_0$. Consequently, we may simply assume $\mD^*<\infty$ and derive from this a contradiction, which leads to the desired claim.

Thus from now on let the assumption $\mD^*\in(0,\infty)$ be settled. By the inductive hypothesis we can find a sequence $(u_n)_n$ which are solutions of \eqref{nls} with $(u_n(0))_n\subset {\mA}$ and maximal lifespan $(I_{n})_n$ such that
\begin{gather}
\lim_{n\to\infty}\|u_n\|_{L_{t,x}^4 H_y^{s}((\inf I_n,0])}=\lim_{n\to\infty}\|u_n\|_{L_{t,x}^4 H_y^{s}([0, \sup I_n))}=\infty,\label{oo1}\\
\lim_{n\to\infty}\mD(u_n)=\mD^*.\label{oo2}
\end{gather}
Up to a subsequence we may also assume that
\begin{align*}
(\mM(u_n),\mH(u_n))\to(\mM_0,\mH_0)\quad\text{as $n\to\infty$}.
\end{align*}
By continuity of $\mD$ and finiteness of $\mD^*$ we know that
\begin{align*}
\mD^*=\mD(\mM_0,\mH_0),\quad
\mM_0\in[0,c),\quad
\mH_0\in[0,m_{c}-\nu).
\end{align*}
From Lemma \ref{lemma coercivity} and \ref{cnls killip visan curve} it follows that $(u_n(0))_n$ is a bounded sequence in $H_{x,y}^1$, hence Lemma \ref{linear profile} is applicable for $(u_n(0))_n$: There exist nonzero linear profiles
$(\tdu^j)_j\subset H_{x,y}^1$, remainders $(w_n^k)_{k,n}\subset H_{x,y}^1$, parameters $(t^j_n,x^j_n)_{j,n}\subset\R\times\R^2$ and $K^*\in\N\cup\{\infty\}$, such that
\begin{itemize}
\item[(i)] For any finite $1\leq j\leq K^*$ the parameter $t^j_n$ satisfies
\begin{align}
t^j_n\equiv 0\quad\text{or}\quad \lim_{n\to\infty}t^j_n= \pm\infty.
\end{align}

\item[(ii)]For any finite $1\leq k\leq K^*$ we have the decomposition
\begin{align}\label{cnls decomp}
u_n(0)=\sum_{j=1}^k T_n^j\phi^j(x,y)+w_n^k=:\sum_{j=1}^k e^{-it_n\Delta_x}\phi^j(x-x_n,y)+w_n^k.
\end{align}

\item[(iii)] The remainders $(w_n^k)_{k,n}$ satisfy
\begin{align}\label{cnls to zero wnk}
\lim_{k\to K^*}\lim_{n\to\infty}\|e^{it\Delta_{x,y}}w_n^k\|_{L_{t,x}^4 H_y^s(\R)}=0.
\end{align}

\item[(iv)] The parameters are orthogonal in the sense that
\begin{align}\label{cnls orthog of pairs}
|t_n^k-t_n^j|+|x_n^k-x_n^j|\to\infty
\end{align}
for any $j\neq k$.

\item[(v)] For any finite $1\leq k\leq K^*$ and $D\in\{1,\pt_{x_i},\pt_y\}$ we have the energy decompositions
\begin{align}
\|D(u_n(0))\|_{2}^2&=\sum_{j=1}^k\|D(T_n^j\tdu^j)\|_{2}^2+\|Dw_n^k\|_{2}^2+o_n(1),\label{orthog L2}\\
\|u_n(0)\|_{4}^{4}&=\sum_{j=1}^k\|T_n^j\tdu^j\|_{4}^{4}
+\|w_n^k\|_{4}^{4}+o_n(1)\label{cnls conv of h}.
\end{align}
\end{itemize}
We now define the nonlinear profiles as follows:
\begin{itemize}
\item For $t^k_\infty=0$, we define $u^k$ as the solution of \eqref{nls} with $u^k(0)=\tdu^k$.

\item For $t^k_\infty\to\pm\infty$, we define $u^k$ as the solution of \eqref{nls} that scatters forward (backward) to $e^{-it\Delta_{x,y}}\tdu^k$ in $H_{x,y}^1$.
\end{itemize}
In both cases we define
\begin{align*}
u_n^k:=u^k(t-t^k_n,x-x_n^k,y).
\end{align*}
Then $u_n^k$ is also a solution of \eqref{nls}. In both cases we have for each finite $1\leq k \leq K^*$
\begin{align}\label{conv of nonlinear profiles in h1}
\lim_{n\to\infty}\|u_n^k(0)-T_n^k \tdu^k\|_{H_{x,y}^1}=0.
\end{align}

In the following, we establish a Palais-Smale type lemma which is essential for the construction of the minimal blow-up solution.

\begin{lemma}[Palais-Smale-condition]\label{Palais Smale}
Let $(u_n)_n$ be a sequence of solutions of \eqref{nls} with maximal lifespan $I_n$, $u_n\in\mA$ and $\lim_{n\to\infty}\mD(u_n)=\mD^*$. Assume also that there exists a sequence $(t_n)_n\subset\prod_n I_n$ such that
\begin{align}\label{precondition}
\lim_{n\to\infty}\|u_n\|_{L_{t,x}^4 H_y^{1}((\inf I_n,\,t_n])}=\lim_{n\to\infty}\|u_n\|_{L_{t,x}^4 H_y^{1}([t_n,\,\sup I_n)}=\infty.
\end{align}
Then up to a subsequence, there exists a sequence $(x_n)_n\subset\R^2$ such that $(u_n(t_n, \cdot+x_n,y))_n$ strongly converges in $H_{x,y}^1$.
\end{lemma}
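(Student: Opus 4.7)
The plan is to follow the standard concentration-compactness scheme of Kenig-Merle adapted to the waveguide setting as in \cite{Luo_Waveguide_MassCritical,Luo_inter}. First, by time translation I reduce to $t_n\equiv 0$. By Lemma \ref{cnls killip visan curve}(iii), the sequence $(u_n(0))_n$ is bounded in $H^1_{x,y}$, so Lemma \ref{linear profile} yields linear profiles $\phi^j$, parameters $(t^j_n,x^j_n,\xi^j_n,\lambda^j_n)$, and remainders $w^k_n$. To each profile I associate a nonlinear profile $u^j_n$: for small-scale profiles ($\lambda^j_\infty=1$), $u^j$ is the maximal solution of \eqref{nls} with data $\phi^j$ at $t=0$ when $t^j_n\equiv 0$, or scattering to $e^{it\Delta_{x,y}}\phi^j$ as $t\to t^j_\infty$ otherwise; for large-scale profiles ($\lambda^j_\infty=\infty$), $u^j_n$ is provided by Lemma \ref{cnls lem large scale proxy} and is globally scattering with scattering norm uniformly bounded in $n$, thanks to the assumption $\mM_0<\pi\widehat M(Q)$.

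The heart of the argument is to show that exactly one nonlinear profile is nontrivial, that it is of small-scale type with $t^1_n\equiv 0$, and that $w^1_n\to 0$ in $H^1_{x,y}$. Using the decouplings \eqref{orthog L2}-\eqref{cnls conv of h}, I obtain $\sum_j \mM(u^j)\leq \mM_0<c$ and $\sum_j \mH(u^j)\leq \mH_0<m_c-\nu$ (interpreting $\mH$ for large-scale profiles in the sense of the limiting resonant system). Combined with the monotonicity properties of $\mD$ from Lemma \ref{cnls killip visan curve}(ii), each nonlinear profile satisfies $\mD(u^j)\leq \mD^*$, with strict inequality as soon as more than one profile is present or the remainder is nonvanishing. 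In that case the inductive hypothesis \eqref{introductive hypothesis} gives uniform scattering norm bounds for every $u^j_n$. I then set up the approximate solution $z_n:=\sum_{j=1}^k u^j_n+e^{it\Delta_{x,y}}w^k_n$ and apply Lemma \ref{lem stability cnls}: the orthogonality \eqref{cnls orthog of pairs} decouples the nonlinear cross terms in the admissible $L^4_{t,x}H^s_y$-norm, while \eqref{cnls to zero wnk} makes the linear remainder small. The stability lemma then yields $\|u_n\|_{L^4_{t,x}H^s_y(I_n)}<\infty$ uniformly for large $n$, contradicting \eqref{precondition}.

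Consequently only one profile survives. The same stability argument with $z_n=u^1_n$ excludes the case $\lambda^1_\infty=\infty$, since a surviving large-scale profile would scatter unconditionally. If instead the surviving small-scale profile had $t^1_n\to\pm\infty$, then $u_n(0)$ would be asymptotically close in $H^1_{x,y}$ to the linear evolution $e^{it^1_n\Delta_{x,y}}\phi^1(\cdot-x^1_n,y)$, forcing finiteness of the scattering norm of $u_n$ in one time direction and again contradicting \eqref{precondition}. Therefore $t^1_n\equiv 0$, the decomposition \eqref{cnls decomp} with $k=1$ reduces to $u_n(0)=\phi^1(\cdot-x^1_n,y)+w^1_n$ with $\|w^1_n\|_{H^1_{x,y}}\to 0$, and $u_n(0,\cdot+x^1_n,y)\to\phi^1$ strongly in $H^1_{x,y}$ along the extracted subsequence.

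The main obstacle will be verifying that each nonlinear profile lies in $\mA$ so that the inductive hypothesis \eqref{introductive hypothesis} actually applies; concretely, the sign condition $\mK>0$ must be transferred from $u_n(0)$ to each $\phi^j$. In the mass-(super)critical intercritical setting this is done by standard scaling arguments, but in the present mass-critical setting the Laplacian and the nonlinear potential share the same scaling, so the usual perturbative argument on $\mK$ is unavailable. I would instead exploit the uniform coerciveness from Lemma \ref{lemma coercivity} together with phase-space orthogonality of the profile supports to show that $\mK(u^j_n(0))\geq 0$ and $\mK(w^k_n)\geq 0$ up to $o_n(1)$; if some profile had $\mK(\phi^j)<0$, one could select $t^j\in(0,1)$ with $\mK(t^j\phi^j)=0$, violating the minimality identity in Lemma \ref{lem 6.1} and Proposition \ref{prop monotone} in conjunction with the monotonicity of $c\mapsto m_c$. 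A secondary but routine point is the decoupling of nonlinear interactions between orthogonal profiles in the $L^{\mathbf{a}}_t L^{\mathbf{r}}_x H^s_y$-norms required by Lemma \ref{lem stability cnls}.
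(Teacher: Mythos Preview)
Your proposal is essentially correct and follows the same concentration-compactness scheme as the paper; indeed the paper only spells out the crucial ``Step 1'' (showing $K(T_n^j\phi^j)>0$ and $E(T_n^j\phi^j)>0$ for every nonzero profile and remainder) and refers the remaining Steps 2--4 to \cite{Luo_Waveguide_MassCritical}, exactly as you outline.

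Two small points on the key step. First, your appeal to Lemma \ref{lemma coercivity} to help prove $K\geq 0$ is circular: that lemma \emph{assumes} $u\in\mathcal{A}$, hence $K(u)>0$. The paper avoids this by working with the functional $I(u)=E(u)-\tfrac12 K(u)=\tfrac12\|\nabla_y u\|_2^2$ (manifestly nonnegative in the mass-critical case) and first proving the alternative characterization $m_c=\inf\{I(u):M(u)=c,\ K(u)\leq 0\}$. Since $I$ decomposes along the profile decomposition, if some $T_n^j\phi^j$ had $K\leq 0$ one gets directly $m_c\leq m_{M(T_n^j\phi^j)}\leq I(T_n^j\phi^j)\leq I(u_n(0))+o_n(1)<m_c-\nu$, a contradiction. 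Your rescaling idea (pick $t\in(0,1)$ with $K(t\phi^j)=0$ and invoke Proposition \ref{prop monotone}) is the same mechanism---it is precisely how the paper proves that characterization---but you should drop the coerciveness reference and route the argument through $I$. Second, your decoupling $\sum_j E(u^j)\leq E_0$ only yields \emph{individual} bounds $E(u^j)\leq E_0$ once every $E(u^j)\geq 0$ is known; hence $K>0$ (and then $E>0$) for each profile must be established \emph{before} the $\mathcal{D}$-comparison, not after.
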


\begin{proof}
The proof essentially follows the same steps from the proof of \cite[Lem. 3.16]{Luo_Waveguide_MassCritical}. Here we will show that the Step 1 in the proof of \cite[Lem. 3.16]{Luo_Waveguide_MassCritical} will continue to hold in the context of the so far constructed variational framework, and we refer the remaining proof to the Steps 2 to 4 from the proof of \cite[Lem. 3.16]{Luo_Waveguide_MassCritical} (where Lemma \ref{lem stability cnls} and \ref{cnls lem large scale proxy} are applied in these steps).

More precisely, we need to show that for any nonzero linear profiles $\phi^j$ and nonzero remainders $w_n^j$ we have
\begin{alignat}{2}
\mH(T_n^j\phi^j)&> 0,\quad E(w_n^j)&&>0\label{bd for S}\\
\mK(T_n^j\phi^j)&> 0,\quad K(w_n^j)&&>0\label{pos of K}
\end{alignat}
for all sufficiently large $n=n(j)\in\N$. We only show the statement for the atoms $\phi^j$, the one for the remainder $w_n^j$ follows in the same manner.

Let us first prove that $m_c$ can also be characterized as
\begin{align}
m_c=\inf\{I(u):u\in S(c),\,K(u)\leq 0\}=:\bar m_c,\label{I charac bar}
\end{align}
where $I(u)$ is defined by \eqref{1.4+}. By definition it is clear that $m_c\geq \bar m_c$. On the other hand, if $u\in V(c)$ then
\[I(u)=E(u)-\frac12 K(u)=E(u)\geq m_c.\]
Otherwise, let $u\in S(c)$ satisfy $K(u)<0$. Then there exists some $t\in(0,1)$ such that $K(tu)=0$. Using Proposition \ref{prop monotone} we then obtain
\[I(u)\geq I(tu)=I(tu)+\frac12 K(tu)=E(tu)\geq m_{t^2 M(u)}\geq m_c,\]
from which the claim follows.

We next assume that \eqref{pos of K} does not hold. Then up to a subsequence we may assume that there exists some $\phi^j$ such that $K(T_n^j\phi^j)\leq 0$ for all $n\in\N$. By the mass decomposition \eqref{orthog L2} we know that $M(T_n^j)<c$ for all $n\gg 1$. Notice that using \eqref{orthog L2} and \eqref{cnls conv of h} we also have the energy decomposition of $I(u_n)$:
\begin{align}
\mI(u_n(0))&=\sum_{j=1}^k\mI(T_n^j\tdu^j)+\mI(w_n^k)+o_n(1)\label{conv of i}.
\end{align}
By the non-negativity of the functional $I$, Proposition \ref{prop monotone}, \eqref{I charac bar} and the inductive hypothesis (implying $\limsup_{n\to\infty} I(u_n)<m_c$) we obtain
\begin{align*}
m_c\leq m_{M(T_n^j\phi^j)}\leq I(T_n^j\phi^j)\leq m_c-\delta
\end{align*}
for some small positive $\delta\ll 1$ and all $n\gg 1$, which leads to a contradiction. This completes the proof.
\end{proof}

Having proved Lemma \ref{Palais Smale} it will be a standard routine to prove the following lemmas concerning the existence of a minimal blow-up solution $u_c$ and its compactness properties. We omit their standard proofs and refer e.g. to \cite[Lem. 4.19, Lem. 4.20]{Luo_Waveguide_MassCritical} for complete details.

\begin{lemma}[Existence of a minimal blow-up solution]\label{category 0 and 1}
Suppose that $\mD^*\in(0,\infty)$. Then there exists a global solution $u_c$ of \eqref{nls} such that $\mD(u_c)=\mD^*$ and
\begin{align*}
\|u_c\|_{L_{t,x}^4 H_y^1((-\infty,0])}=\|u_c\|_{L_{t,x}^4 H_y^1([0,\infty))}=\infty.
\end{align*}
Moreover, $u_c$ is almost periodic in $H_{x,y}^1$ modulo $\R_x^2$-translations, i.e. the set $\{u(t):t\in\R\}$ is precompact in $H_{x,y}^1$ modulo translations w.r.t. the $x$-variable.
\end{lemma}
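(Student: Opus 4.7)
The plan is the now-standard Kenig--Merle extraction of an almost periodic minimal blow-up solution, built directly on top of the Palais--Smale Lemma \ref{Palais Smale} and the variational groundwork of Section \ref{subsec: r2t1 variational}. First I would produce good initial data for $u_c$. By definition of $\mD^*$ together with \eqref{oo1}--\eqref{oo2}, one may select a sequence of solutions $(v_n)_n$ of \eqref{nls} with maximal lifespans $I_n$, $v_n \in \mA$, $\mD(v_n) \to \mD^*$, and $\|v_n\|_{L_{t,x}^4 H_y^1(I_n)} = \infty$. Picking, for each $n$, a suitable $t_n \in I_n$ (after possibly time-translating $v_n$) so that both the forward and the backward scattering norm at $t_n$ diverge as $n \to \infty$, one attains the hypothesis \eqref{precondition} of Lemma \ref{Palais Smale}. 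This lemma then produces translations $(x_n)_n \subset \R^2$ and a strong $H_{x,y}^1$-limit $\phi$ of $v_n(t_n, \cdot + x_n, \cdot)$ along a subsequence. I would define $u_c$ as the solution of \eqref{nls} with $u_c(0) = \phi$.

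Next I would verify that $u_c$ is a global solution in $\mA$ with $\mD(u_c) = \mD^*$. Strong $H_{x,y}^1$-convergence together with continuity of mass and energy gives $M(\phi) = \lim_n M(v_n(t_n))$ and $E(\phi) = \lim_n E(v_n(t_n))$, whence $\mD(u_c) = \mD^* < \infty$, forcing $M(\phi) < c$ and $E(\phi) < m_c - \nu$. Since $K(v_n(t_n)) > 0$ by Lemma \ref{lem 7.8}, continuity yields $K(\phi) \geq 0$; if $K(\phi) = 0$ with $\phi \neq 0$, then $\phi \in V(M(\phi))$ and Proposition \ref{prop monotone} gives $E(\phi) \geq m_{M(\phi)} \geq m_c > m_c - \nu$, a contradiction. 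Hence $\phi \in \mA$, and Lemma \ref{lem 7.8} propagates this to the whole trajectory $u_c(t)$. The coercivity estimate of Lemma \ref{lemma coercivity} combined with mass conservation then bounds $\|u_c(t)\|_{H_{x,y}^1}$ uniformly, so the $H^1$-blow-up alternative forces $u_c$ to be global in time.

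The third step is to upgrade the blow-up of $\|v_n\|_{L_{t,x}^4 H_y^1}$ on both half-intervals at $t_n$ to the same property for $u_c$ at $0$. If $\|u_c\|_{L_{t,x}^4 H_y^1([0,\infty))}$ were finite, then with reference solution $z = u_c$, perturbation $u = v_n(t_n + \cdot)$, and driving error $e = 0$, the long-time stability Lemma \ref{lem stability cnls} would apply for all $n$ large: the data difference $\|v_n(t_n,\cdot+x_n,\cdot) - \phi\|_{L_x^2 H_y^s}$ tends to zero, and Strichartz estimates control $\|e^{it\Delta}(v_n(t_n,\cdot+x_n,\cdot) - \phi)\|_{L_{t,x}^4 H_y^s}$ by the same quantity. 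The conclusion would force $\|v_n\|_{L_{t,x}^4 H_y^1([t_n,\sup I_n))}$ to be finite for large $n$, contradicting the choice of $t_n$. The backward direction is symmetric, so both half-scattering norms of $u_c$ are infinite.

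Finally, almost periodicity modulo $\R_x^2$-translations is another direct application of Lemma \ref{Palais Smale}. Given any $(\tau_n)_n \subset \R$, set $\tilde v_n(t) := u_c(t + \tau_n)$; each $\tilde v_n$ is a global solution in $\mA$ with $\mD(\tilde v_n) = \mD^*$, and its $L_{t,x}^4 H_y^1$-norms on $(-\infty,0]$ and $[0,\infty)$ are both infinite by the preceding step. Lemma \ref{Palais Smale} applied at $t = 0$ then yields translations $x_n \in \R^2$ and a strongly convergent subsequence of $u_c(\tau_n, \cdot + x_n, \cdot)$ in $H_{x,y}^1$, which is precisely the asserted precompactness. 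The main technical obstacle lies in the third step: one must carefully match the hypotheses of Lemma \ref{lem stability cnls} (smallness of the initial-data difference in $L_x^2 H_y^s$ together with a small linear-flow bound in $L_{t,x}^4 H_y^s$) against what strong $H_{x,y}^1$-convergence of $v_n(t_n,\cdot+x_n,\cdot)$ to $\phi$ literally provides, which is the same bookkeeping carried out in detail in \cite{Luo_Waveguide_MassCritical}.
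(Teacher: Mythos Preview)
Your proposal is correct and follows precisely the standard Kenig--Merle extraction that the paper itself invokes: the paper omits the proof entirely, stating that it is ``a standard routine'' and referring to \cite[Lem.~4.19, Lem.~4.20]{Luo_Waveguide_MassCritical} for details. Your four steps (Palais--Smale to extract $\phi$, membership in $\mA$ and globality via coercivity, the stability argument to upgrade bilateral blow-up from $v_n$ to $u_c$, and a second Palais--Smale application for almost periodicity) are exactly this routine; the only cosmetic point is that $\phi\neq 0$ is guaranteed already by $\mD(\phi)=\mD^*>0$, which you use implicitly.
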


\begin{lemma}\label{lemma property of uc}
Let $u_c$ be the minimal blow-up solution given by Lemma \ref{category 0 and 1}. Then
\begin{itemize}
\item[(i)] There exists a center function $x:\R\to \R^2$ such that for each $\vare>0$ there exists $R>0$ such that
\begin{align*}
\int_{|x+x(t)|\geq R}|\nabla_{x,y} u_c(t)|^2+|u_c(t)|^2+|u_c(t)|^{4}\,dxdy\leq\vare\quad\forall\,t\in\R.
\end{align*}

\item[(ii)]There exists some $\delta>0$ such that $\inf_{t\in\R}\mK(u_c(t))=\delta$.
\end{itemize}
\end{lemma}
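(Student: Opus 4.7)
For part (i), the plan is to deduce tightness directly from the precompactness modulo $\R^2_x$-translations granted by Lemma \ref{category 0 and 1}. Given $\vare>0$, precompactness produces finitely many $\phi_1,\dots,\phi_N\in H_{x,y}^1$ such that for every $t\in\R$ one can find an index $j(t)$ and a shift $x(t)\in\R^2$ with $\|u_c(t,\cdot-x(t),\cdot)-\phi_{j(t)}\|_{H_{x,y}^1}<\vare$. Choosing $R=R(\vare)$ large enough that each fixed $\phi_j$ has $H_{x,y}^1$- and $L_{x,y}^{\tas+2}$-tails bounded by $\vare$ outside $\{|x|\leq R\}\times\T$ (the $L^4$-tail control follows from the Sobolev embedding $H_{x,y}^1\hookrightarrow L_{x,y}^4$, valid for $d=2$, $m=1$), a triangle inequality after the change of variables $x\mapsto x+x(t)$ then yields the exterior estimate with right-hand side $C\vare$, and we rename $\vare$. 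The sign convention between $|x+x(t)|$ and $|x-x(t)|$ is absorbed into the definition of $x(t)$.

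For part (ii), the plan is a contradiction argument driven by the variational inequality $E(u)\geq m_{M(u)}$ available on $V(M(u))$. Suppose that $\inf_{t\in\R}K(u_c(t))=0$ and choose $(t_n)_n\subset\R$ with $K(u_c(t_n))\to 0$. The almost periodicity from Lemma \ref{category 0 and 1} delivers, up to a subsequence, translations $(y_n)_n\subset\R^2$ and some $u^*\in H_{x,y}^1$ with $u_c(t_n,\cdot+y_n,\cdot)\to u^*$ strongly in $H_{x,y}^1$. Strong convergence then transports $M$, $E$, and $K$ through the limit, and combined with conservation of mass and energy along the NLS flow yields $M(u^*)=M_0$, $E(u^*)=E_0$, and $K(u^*)=0$, where $M_0\in[0,c)$ and $E_0\in[0,m_c-\nu)$ are the limits identified just before Lemma \ref{Palais Smale}.

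Next, I would argue $M_0>0$: otherwise $u_c$ would be identically zero, contradicting $\mD^*>0$ and the blow-up condition in Lemma \ref{category 0 and 1}. Consequently $u^*\neq 0$ and $u^*\in V(M_0)$, so by definition $E(u^*)\geq m_{M_0}$. Since $M_0<c<2\pi\widehat M(Q)$, the monotone decrease of $c\mapsto m_c$ from Proposition \ref{prop monotone} gives $m_{M_0}\geq m_c$, leading to the chain
\[m_c\leq m_{M_0}\leq E(u^*)=E_0<m_c-\nu<m_c,\]
which is impossible. This delivers the asserted uniform lower bound $\delta>0$.

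The main (mild) obstacle is to make sure that all three variational quantities $M$, $E$, $K$ genuinely pass to the limit and that $u^*$ is nonzero; both rely on the strong $H_{x,y}^1$-convergence afforded by almost periodicity together with the positivity $M_0>0$ forced by $\mD^*>0$. Once these ingredients are in place, the conclusion is a direct appeal to Proposition \ref{prop monotone} and the definition of $m_{M_0}$.
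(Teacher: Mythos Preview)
The paper does not actually prove this lemma; it declares the argument ``a standard routine'' and refers to \cite[Lem.~4.19, Lem.~4.20]{Luo_Waveguide_MassCritical}. Your proposal supplies a correct, self-contained proof along the expected lines.

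For part (i), one small quantifier issue: the center function $x(\cdot)$ must be chosen \emph{before} $\vare$, whereas your $\vare$-net argument selects the shift $x(t)$ after fixing $\vare$. The fix is standard: first invoke the almost periodicity of Lemma~\ref{category 0 and 1} to produce a single function $x:\R\to\R^2$ for which the translated orbit $\{u_c(t,\cdot+x(t),\cdot):t\in\R\}$ is itself precompact in $H_{x,y}^1$ (this is what ``almost periodic modulo translations'' is usually taken to mean, or is an easy consequence of the sequential formulation together with $M_0>0$), and only then run your finite $\vare$-net/tail argument with that fixed $x(t)$.

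Part (ii) is exactly the right variational contradiction and is complete as written: the strong $H_{x,y}^1$-limit $u^*$ lands on the constraint $K=0$ with mass $M_0<c$, so $E(u^*)\ge m_{M_0}\ge m_c$ by Proposition~\ref{prop monotone}, contradicting $E_0<m_c-\nu$. Your check that $M_0>0$ (hence $u^*\neq 0$) via $\mD^*>0$ is also fine.
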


\subsection{Extinction of the minimal blow-up solution}
In this subsection we close the proof of Theorem \ref{thm large data} by showing the contradiction that the minimal blow-up solution $u_c$ must be equal to zero. The proof is based on the compactness properties of the critical element $u_c$ deduced in last section, and the following lemma on the decay property of the center function. For a proof, we refer e.g. to \cite{non_radial} or \cite{killip_visan_soliton}.

\begin{lemma}\label{holmer}
Let $x(t)$ be the center function given by Lemma \ref{lemma property of uc}. Then $x(t)$ obeys the decay condition $x(t)=o(t)$ as $|t|\to\infty$.
\end{lemma}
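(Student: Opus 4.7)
The plan is to run a truncated center-of-mass (virial) argument, leveraging the almost-periodicity from Lemma \ref{lemma property of uc}. As a preliminary reduction I note that the linear profile decomposition Lemma \ref{linear profile} admits frequency parameters $(\xi_n^j)$; by examining the Palais-Smale step in the proof of Lemma \ref{category 0 and 1} (or, equivalently, by Galilean-boosting with velocity $\xi=P(u_c)/M(u_c)$ to construct a competing minimal element) one may arrange $P(u_c):=\operatorname{Im}\int_{\R^2\times\T}\bar u_c\nabla_x u_c\,dxdy=0$; I take this throughout.

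Fix $\chi\in C^\infty(\R^2;\R^2)$ with $\chi(x)=x$ on $|x|\le 1$ and $|\chi|\le 2$ everywhere, and for each $R>0$ set
\[
X_R(t):=\int_{\R^2\times\T} R\,\chi(x/R)\,|u_c(t,x,y)|^2\,dxdy\in\R^2.
\]
Starting from the local mass law $\partial_t|u_c|^2=-2\operatorname{div}_x\operatorname{Im}(\bar u_c\nabla_x u_c)$ coming from \eqref{nls} and integrating by parts,
\[
\dot X_R(t)=2\int_{\R^2\times\T}(\nabla\chi)(x/R)\cdot\operatorname{Im}(\bar u_c\nabla_x u_c)\,dxdy=2P(u_c)+2\mathcal{E}_R(t),
\]
where $\mathcal{E}_R(t)$ is supported on $\{|x|\ge R\}$ and satisfies $|\mathcal{E}_R(t)|\le C\|\nabla_x u_c\|_2\bigl(\int_{|x|\ge R}|u_c|^2\bigr)^{1/2}$. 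Given $\eta>0$, Lemma \ref{lemma property of uc}(i) supplies $R_0=R_0(\eta)$ with $\int_{|x+x(t)|\ge R_0}|u_c|^2\le\eta$ for every $t$. Whenever $|x(t)|\le R-R_0$, the triangle inequality gives $\{|x|\ge R\}\subset\{|x+x(t)|\ge R_0\}$, whence $|\dot X_R(t)|\le C\eta^{1/2}$; at the same time, exploiting $R\chi(x/R)=x$ on the concentration region produces the center-of-mass approximation $|X_R(t)+x(t)M(u_c)|\le C(R_0+\eta R)$.

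Finally, define the exit time $T_R:=\sup\{t\ge 0:|x(s)|\le R-R_0\text{ for all }s\in[0,t]\}$. If $T_R=\infty$ then $x$ is bounded and we are done. Otherwise integrating the derivative bound on $[0,T_R]$ and combining with the center-of-mass approximation at both endpoints yields
\[
(R-R_0-|x(0)|)M(u_c)\le C\eta^{1/2}T_R+2C(R_0+\eta R),
\]
so that choosing $\eta$ small enough to absorb the $\eta R$ term into the left-hand side gives $T_R\gtrsim R\eta^{-1/2}$. Re-inserting the bounds at arbitrary $t\in[0,T_R]$ gives $|x(t)|M(u_c)\le C'R_0+C\eta^{1/2}t+2C\eta R$, which on the sub-interval $\eta^{1/2}R\le t\le T_R$ translates into $|x(t)|/t\le C\eta^{1/2}/M(u_c)+o_{R\to\infty}(1)$; letting $R\to\infty$ covers every sufficiently large $t$, and then $\eta\to 0$ forces $x(t)/t\to 0$ as $t\to+\infty$. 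The time-reversed argument treats $t\to-\infty$. The main obstacle is the circular coupling between the truncation scale $R$ and the unknown size of $|x(t)|$, which is resolved by the exit-time bootstrap; a secondary subtlety is the preliminary Galilean reduction, since a boost modifies both $E$ and $K$ so that the boosted solution no longer lies in $\mA$ but must still be shown to retain the almost-periodicity built in Section \ref{sec 6}.
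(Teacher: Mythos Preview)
The paper omits the proof and simply cites \cite{non_radial,killip_visan_soliton}; your truncated center-of-mass argument is precisely the standard one from those references, so you are reproducing the intended approach. Two remarks on the points you yourself flag. First, the reduction to $P(u_c)=0$ is essential (otherwise $x(t)/t\to -2P/M(u_c)\neq 0$ and the lemma is simply false), and the way to close it within the $\mA$/$\mD$ framework is to boost by $\varepsilon P/M(u_c)$ with $\varepsilon>0$ small rather than $\varepsilon=1$: this strictly lowers $E$ and hence $\mD$, keeps $K>0$ by continuity, preserves both the mass and the infinite $L_{t,x}^4 H_y^s$ norm, and therefore produces a non-scattering element of $\mA$ with $\mD<\mD^*$, contradicting minimality. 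Second, your exit-time packaging is correct in spirit but the sentence ``letting $R\to\infty$ covers every sufficiently large $t$'' is not literally right, since the window $[\eta^{1/2}R,T_R]$ slides off to infinity with $R$; the cleaner formulation from \cite{non_radial} fixes $t_1$, takes $R=R_0+\sup_{s\in[0,t_1]}|x(s)|$, and absorbs the $C\eta R$ term directly into $M(u_c)\sup_{[0,t_1]}|x|$, yielding $|x(t_1)|\lesssim R_0(\eta)+\eta^{1/2}t_1$ for every $t_1$ and hence $x(t)=o(t)$ upon sending $\eta\to 0$.
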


We are now ready to prove Theorem \ref{thm large data}.

\begin{proof}[Proof of Theorem \ref{thm large data}]
We firstly prove the statement concerning the global well-posedness of \eqref{nls} below ground states. Since \eqref{nls} is energy-subcritical, it is well-known (see for instance \cite{TzvetkovVisciglia2016}) that the global well-posedness of a solution $u$ of \eqref{nls}is equivalent to the statement that for all $n\in\N$ we have
\begin{align}\label{7.44}
\sup_{t\in[-n,n]}\|\nabla_{x,y}u(t)\|_{2}<\infty.
\end{align}
Let $u_0$ satisfy \eqref{gwp condition} and set $c:=M(u_0)$. By Lemma \ref{lemma coercivity} and the conservation of mass and energy we obtain
\[\|u\|_{H_{x,y}^1}^2\lesssim M(u)+E(u)<c+m_c-\nu,\]
from which \eqref{7.44} and consequently the global well-posedness of $u$ follows.

Let us now restrict ourselves to the case $d=2$ and $M(u_0)<\pi \widehat M(Q)$ in order to prove the large data scattering result below ground states. As mentioned previously, we show the contradiction that the minimal blow-up solution $u_c$ given by Lemma \ref{category 0 and 1} is equal to zero. Let $\chi:\R^d\to\R$ be a smooth radial cut-off function satisfying
\begin{align*}
\chi=\left\{
             \begin{array}{ll}
             |x|^2,&\text{if $|x|\leq 1$},\\
             0,&\text{if $|x|\geq 2$}.
             \end{array}
\right.
\end{align*}
For $R>0$, we define the local virial action $z_R(t)$ by
\begin{align*}
z_{R}(t):=\int R^2\chi\bg(\frac{x}{R}\bg)|u_c(t,x)|^2\,dxdy.
\end{align*}
Direct computation yields
\begin{align}
\pt_t z_R(t)=&\,2\,\mathrm{Im}\int R\nabla_x \chi\bg(\frac{x}{R}\bg)\cdot\nabla_x u_c(t)\bar{u}_c(t)\,dxdy,\label{final4}
\end{align}
and
\begin{align}
\pt_t^2 z_R(t)=&\,4\int \pt^2_{x_j x_k}\chi\bg(\frac{x}{R}\bg)\pt_{x_j} u_c\pt_{x_k}\bar{u}_c\,dxdy-\frac{1}{R^2}\int\Delta_x^2\chi\bg(\frac{x}{R}\bg)|u_c|^2\,dxdy\nonumber\\
-&\,\int\Delta_x\chi\bg(\frac{x}{R}\bg)|u_c|^{\alpha+2}\,dxdy\nonumber.
\end{align}
We then obtain
\begin{align*}
\pt_t^2 z_R(t)=8\mK(u_c(t))+A_R(u_c(t)),
\end{align*}
where
\begin{align*}
A_R(u_c(t))=&\,4\int\bg(\pt^2_{x_j}\chi\bg(\frac{x}{R}\bg)-2\bg)|\pt_{x_j} u_c|^2\,dxdy+4\sum_{j\neq k}\int_{R\leq|x|\leq 2R}\pt_{x_j}\pt_{x_k}\chi\bg(\frac{x}{R}\bg)\pt_{x_j} u_c\pt_{x_k}\bar{u}_c\,dxdy\nonumber\\
-&\,\frac{1}{R^2}\int\Delta_x^2\chi\bg(\frac{x}{R}\bg)|u_c|^2\,dxdy
-\int\bg(\Delta_x\chi\bg(\frac{x}{R}\bg)-2d\bg)|u_c|^{\alpha+2}\,dxdy.
\end{align*}
We have the rough estimate
\begin{align*}
|A_R(u(t))|\leq C_1\int_{|x|\geq R}|\nabla_x u_c(t)|^2+\frac{1}{R^2}|u_c(t)|^2+|u_c(t)|^{\alpha+2}\,dxdy
\end{align*}
for some $C_1>0$. By Lemma \ref{lemma property of uc} we know that there exists some $\delta>0$ such that
\begin{align}\label{small extinction ff}
\inf_{t\in\R}(8\mK(u_c(t)))\geq 8\delta=:2\eta_1>0.
\end{align}
From Lemma \ref{lemma property of uc} it also follows that there exists some $R_0\geq 1$ such that
\begin{align*}
\int_{|x+x(t)|\geq R_0}|\nabla_{x,y} u_c(t)|^2+|u_c(t)|^2+|u_c(t)|^{\alpha+2}\,dxdy\leq \frac{\eta_1}{C_1}.
\end{align*}
Thus for any $R\geq R_0+\sup_{t\in[t_0,t_1]}|x(t)|$ with some to be determined $t_0,t_1\in[0,\infty)$, we have
\begin{align}\label{final3}
\pt_t^2 z_R(t)\geq \eta_1
\end{align}
for all $t\in[t_0,t_1]$. By Lemma \ref{holmer} we know that for any $\eta_2>0$ there exists some $t_0\gg 1$ such that $|x(t)|\leq\eta_2 t$ for all $t\geq t_0$. Now set $R=R_0+\eta_2 t_1$. Integrating \eqref{final3} over $[t_0,t_1]$ yields
\begin{align}\label{12}
\pt_t z_R(t_1)-\pt_t z_R(t_0)\geq \eta_1 (t_1-t_0).
\end{align}
Using \eqref{final4}, Cauchy-Schwarz and Lemma \ref{cnls killip visan curve} we have
\begin{align}\label{13}
|\pt_t z_{R}(t)|\leq C_2 \mD^*R= C_2 \mD^*(R_0+\eta_2 t_1)
\end{align}
for some $C_2=C_2(\mD^*)>0$. \eqref{12} and \eqref{13} give us
\begin{align*}
2C_2 \mD^*(R_0+\eta_2 t_1)\geq\eta_1 (t_1-t_0).
\end{align*}
Setting $\eta_2=\frac{\eta_1}{4C_2\mD^*}$, dividing both sides by $t_1$ and then sending $t_1$ to infinity we obtain $\frac{1}{2}\eta_1\geq\eta_1$, which implies $\eta_1\leq 0$, a contradiction. This completes the proof.
\end{proof}

\subsection{Finite time blow-up below ground states}
\begin{proof}[Proof of Theorem \ref{thm blow up}]
We follow Glassey's virial arguments \cite{Glassey1977} to prove the claim. First notice that by using the same approximation arguments as in the proof of \cite[Prop. 6.5.1]{Cazenave2003} we are able to show that $|x|u(t)\in L_{x,y}^2$ for all $t$ in the maximal lifespan of $u$. This enable us to define the quantity
\begin{align*}
J(t):=\int |x|^2|u(t,x,y)|^2\,dxdy
\end{align*}
and direct computation yields $\pt_{t}^2J(t)=8\mK(u(t))$.

Next, define the set $\mathcal{B}$ by
\[\mathcal{B}:=\{\phi\in H_{x,y}^1: M(\phi)<2\pi\widehat M(Q),\,E(u)<m_{M(\phi)},\,K(\phi)<0\}.\]
We claim that for any $\phi\in\mathcal{B}$ we have
\begin{align}\label{blow up upper}
K(\phi)<2(E(u)-m_{M(\phi)}).
\end{align}
Indeed, since $K(\phi)<0$, there exists some $t\in(0,1)$ such that $K(t\phi)=0$. Using also the monotonicity of $c\mapsto m_c$ we obtain
\[m_{M(\phi)}\leq m_{t^2M(\phi)}=\frac{t^2}{2}\|\pt_y\phi\|_2^2<\frac12\|\pt_y\phi\|_2^2=E(u)-\frac12 K(u),\]
from which the claim follows.

We now come back to the main proof. Arguing as in the proof of Lemma \ref{lem 7.8} we know that if $u_0\in \mathcal{B}$, so is $u(t)$ for all $t$ in the lifespan of $\mathcal{B}$. Thus \eqref{blow up upper} and $\pt_{t}^2 J(t)=8\mK(u(t))$ yield
\begin{align*}
\pt_{t}^2 J(t)=8\mK(u(t))\leq 16(\mH(u)-m_{\mM(u)})<0.
\end{align*}
This particularly implies that $t\mapsto J(t)$ is a positive and concave function simultaneously. Hence the function $t\mapsto J(t)$ can not exist for all $t\in\R$ and the desired claim follows.
\end{proof}




\end{document}